\theoremstyle{plain}
\newtheorem{theorem}{Theorem}[section]
\newtheorem{proposition}[theorem]{Proposition}
\newtheorem{lemma}[theorem]{Lemma}
\newtheorem{definition}[theorem]{Definition}
\newtheorem{corollary}[theorem]{Corollary}
\newtheorem{example}[theorem]{Example}
\newtheorem{remark}[theorem]{Remark}
\let\a\alpha
\let\b\beta
\let\d\delta
\let\l\lambda
\let\k\chi
\def\D{\Delta}
\def\F{{\mathbb F}}
\def\Fq{{\F_{q}}}
\def\Z{{\mathbb Z}}
\def\Zp{{\Z_{p}}}
\def\Q{{\mathbb Q}}
\def\Qp{{\Q_p}}
\def\Z{{\mathbf Z}}
\def\Zp{{\mathbf Z}_p}
\def\F{{\mathbf F}}
\def\Fq{{\mathbf F}_q}
\def\Z{{\bf Z}}
\def\Q{{\bf Q}}
\def\D{\Delta}
\def\a{\alpha}
\def\l{\lambda}
\def\b{\beta}
\begin{document}

	\title {\bf Ergodicity for uniformly differentiable modulo $p$ functions on $\Z_p$}
	
	\author{Sangtae Jeong 
		\\Department of Mathematics, Inha University, Incheon, Korea
		22212}
	\begin{NoHyper}
		\let\thefootnote\relax\footnotetext{\\
			\noindent{\em Keywords}:1-Lipschitz, Uniformly differentiable modulo $p$, Ergodic,Van der Put basis, Mahler basis.  \\
			The author is supported by the Basic Science Research Program through the National Research Foundation of Korea (NRF) and funded by the Ministry of Education, Science, and Technology( 2020R1A2C1A01003498).

			{\em Mathematics Subject Classification} 2000 Primary: 37P05; Secondary: 11S82, 37B05.
			\\
			{${}^1$\rm E-mail}: stj@inha.ac.kr \\
			
		}
	\end{NoHyper}

	\newcommand{\Addresses}{
		\bigskip
		\footnotesize
		
		\textit{E-mail address: }\href{mailto:stj@inha.ac.kr}{\texttt{stj@inha.ac.kr}}
		}

	\bibliographystyle{alpha}
	\maketitle

\begin{abstract}
We provide an ergodicity criterion for uniformly differentiable modulo $p$ functions on $\Z_p$ in regard to the minimal level of the reduced functions by showing that ergodic conditions are explicitly found in terms of the coefficients of Mahler or van der Put for each prime $p.$
To this end, it is essential to give  an alternative, natural proof of Memi$\acute{c}$'s result regarding Mahler's coefficients estimation for uniformly differentiable modulo $p$ functions on $\Z_p.$
\end{abstract}

\tableofcontents

\section{Introduction}

\indent It is well known that every continuous $p$-adic function on $\Zp$ is uniquely expanded as an interpolation series in terms of the coefficients of Mahler and van der Put. A major task  is to characterize $p$-adic functions that have analytic  or dynamical features from the perspective of these two coefficients.

On the analytic side, the characterizations of various classes of $p$-adic functions, such as Lipschitz functions, strictly differentiable functions, and locally-analytic functions in Mahler's or van der Put's expansions has been obtained\cite{S}. Recently,  Memi$\acute{c}$ \cite{Me20b} provided necessary and sufficient conditions on Mahler's cofficients for a 1-Lipschitz function on $\Z_p$ to be uniformly differentiable modulo $p.$

On the dynamical side, a study in this direction was motivated by poineering works  of Anashin's in \cite{An1, An2}
who provided a description of measure-preservation and ergodicity of  $1$-Lipschitz functions on $\Zp$ in Mahler's coefficients. He also defined several subclasses of $1$-Lipschitz functions and characterized their ergodicity in terms of the level of the reduced functions. He then raised an open question of finding necessary and sufficient conditions for uniformly differentiable modulo $p$ functions on $\Z_p$ to be ergodic.
As a partial answer to this question, Memi$\acute{c}$ et al. \cite{MM} gave an ergodicity criterion for such functions in terms of intrinsic data associated with the given function (see Theorem \ref{Mcri}).
Along this line of research, Jeong \cite{J21b} gave necessary and sufficient conditions for ergodic $1$-Lipschitz functions of $\mathcal{B}$-class in expansions of Mahler and van der Put. As of now, the theory of $p$-adic dynamical systems has been intensively developed \cite{AKY14a, FL,DZ, CFF,  Me20a,JL17,J18,Ya} and the reader can consult with the monographs \cite{AK,Si} and the references therein for further development.

The aim of this paper is twofold, in the sense that we examine both analytical and dynamical properties of uniformly differentiable modulo $p$ functions on $\Z_p.$
First, we revisit Memi$\acute{c}$'s result regarding Mahler's coefficients of uniformly differentiable modulo $p$ functions on $\Z_p$ and provide an alternative, natural proof of her result because her arguments apply to only the case of an odd prime $p.$ The new proof reveals a clearer picture of the function in question through close interplay between two aforementioned coefficients as well as interesting properties associated with several double sums for binomial coefficients.

Second, we revisit Memi$\acute{c}$ et al.'s ergodicity criterion for uniformly differentiable modulo $p$ functions on $\Z_p$ and provide a complete description of ergodicity of such functions in terms of the minimal level of the reduced functions. Moreover, we characterize ergodicity for uniformly differentiable modulo $p$ functions on $\Z_p$ in terms of the coefficients of Mahler and van der Put for the cases where $p=2$ or 3. For any prime $p\geq 5$, we provide two methods of finding ergodic conditions for a uniformly differentiable modulo $p$, 1-Lipschitz function on $\Z_p.$

The remainder of this paper is organized as follows. Section 2 contains a brief discussion of the prerequisites, consisting of two bases of Mahler and van der Put, dynamical properties of $1$-Lipschitz functions on $\Zp,$ and basics on uniformly differentiable modulo $p$ functions on $\Z_p.$
Section 3 provides interesting calculations on Mahler's coefficients that characterize uniformly differentiable modulo $p$ functions on $\Z_p.$ Finally, Section 4 is devoted to presenting an ergodicity criterion of uniformly differentiable modulo $p$ functions on $\Z_p$ in terms of the minimal level of the reduced functions by showing that ergodic conditions of coefficients can be explicitly given for each case of $p.$

\section{Preliminaries}

\subsection{Two bases of Mahler and van der Put}

\indent Let $\Zp$ be the ring of $p$-adic integers for a prime number $p,$ $\Q_p$ be the ring of  $p$-adic numbers, and
$|\cdot|$  be the (normalized) absolute value on $\Q_p$ associated with the additive valuation $v_{p}$ on $\Q_p$  such that
$ |x|_{p} = p^{-{v}_p(x)}$. Let $C(\Zp,\Qp)$ be the space of all continuous functions from $\Zp$ to $\Qp$.
This space is then well known to be a $\Qp$-Banach space under the supremum norm $||f||_{\rm sup}$ defined
by $||f||_{\rm sup} = {\rm max} \{ |f(x)| : x \in \Zp \}$ for $f \in C(\Zp,\Qp)$.

\begin{definition}
A sequence $\{f_n\}_{n\geq0}$ in $C(\Zp,\Qp)$ is called an orthonormal basis for $C(\Zp,\Qp)$ if and only if the
following two conditions are satisfied:
\par
{\rm(1)} every $f \in C(\Zp,\Qp)$ can be expanded uniquely as $f
=\sum_{n=0}^{\infty}a_{n}f_n$, with $a_{n} \in \Qp \rightarrow 0$ as $n \rightarrow \infty$;
\par
{\rm(2)} the sup-norm of $f$ is given by $\|f\|_{\rm sup}
=\mbox{max}\{|a_{n}|\}$.
\end{definition}

Herein,  we present a brief review for the van der Put and Mahler orthonormal bases in $C(\Zp, \Qp).$ The required notation and terminology associated with the van der Put basis is first introduced. For an integer $m\geq0$ written $p$-adically as $m=m_0 +m_1p +\cdots + m_{s}p^{s}$ for $0 \leq m_{i} \leq p-1$ and $m_{s}  \not =0$, $m_{-}$ is defined as $$ m_{-}=m-m_{s}p^{s}.$$
For an integer $m\geq 0$ and $x\in \Zp$, the van der Put functions $\{\chi(m,x)\}_{m\geq 0}$ are defined by
the characteristic functions of certain balls $B_{p^{-\lfloor \log_p m\rfloor -1}}(m)$ centered around $m$ of radius
$p^{-\lfloor \log_p m\rfloor -1},$ where by convention it is assumed that $\lfloor \log_p 0\rfloor =1$. That is,
\begin{equation*}\label{putdef}
\chi(m,x) = \left \{ \begin{array}{ll}
1 &~\mbox{if}~|x-m|\leq p^{-\lfloor \log_p m\rfloor -1};\\
0 &~\mbox{otherwise.}
\end{array}\right.
\end{equation*}

From \cite{Ma2,vP}, it is shown that $\{\chi(m,x)\}_{m\geq 0}$
is an orthonormal basis for $C(\Z_p,\Q_p).$
Hence, every continuous function $f: \Zp \rightarrow \Qp$ can be expanded uniquely as
\begin{equation}\label{Vanexp}
f(x)=\sum_{m=0}^{\infty}B_{m}\chi(m,x),
\end{equation}
with $B_{m}\in \Q_p  \rightarrow 0$ as $m  \rightarrow \infty.$
Moreover, the expansion coefficients $B_{m}$ are determined by the following formula:
\begin{equation}\label{vancf}
B_{m}=\left \{ \begin{array}{ll}
f(m)-f(m_-)~&~\mbox{if}~~m\geq p;\\
f(m)~   &~\mbox{otherwise.}
\end{array}
\right.
\end{equation}

We now recall Mahler's basis of $C(\Zp,\Qp).$
It is shown in \cite{Ma1,Ma2,S} that every continuous function $f: \Zp \rightarrow \Qp$  has a Mahler expansion
\begin{equation}\label{Maexp}
 f(x)=\sum_{n=0}^{\infty}a_{n}\binom{x}{n},
\end{equation}
with $a_{n} \in \Q_p \rightarrow 0$ as $n \rightarrow \infty,$
where the binomial coefficient polynomials
$\{\binom{x}{n}\}_{n\geq 0}$  are defined by
\begin{eqnarray*}\label{mahdef}
\binom{x}{n}=\frac{x(x-1)\cdots (x-n+1)}{n!}~(n\geq 1)~{\rm and} ~\binom{x}{0}=1.
\end{eqnarray*}
It is straightforward to see from the definition that for $n\geq 1$,
\begin{eqnarray}\label{binfor}
n\binom{x}{n}=x\binom{x-1}{n-1},
\end{eqnarray}
which will be used heavily without mentioning it in most of the cases.
The expansion coefficients $\{a_{n}\}_{n\geq0}$ can be recovered by the formula
\begin{eqnarray}\label{Mcof}
a_{n}=\sum_{i=0}^{n}(-1)^{n-i} \binom{n}{i}f(i)
\end{eqnarray}

For later use, it is of interest to recall the relations between the van der Put coefficients of $f$ and the Mahler coefficients in the interpolation series of $f$ in (\ref{Maexp}). The reader can consult with \cite{Ma2} for a detailed exposition.
Let $a_n(m)$ be the coefficients in the interpolation series
\begin{equation}\label{kaimx}
 \k(m,x)=\sum_{n=0}^{\infty}a_n(m) \binom{x}{n}.
\end{equation}

By considering the generating series associated with two sequences of numbers
$ \{\k(m,n)\}_{n\geq0}$ and $\{a_n(m)\}_{n\geq 0},$
the Mahler coefficients $a_n$ are explicitly determined by the formula
\begin{equation}\label{anform}
    a_n =\sum_{m=0}^{n}a_n(m)B_m,
\end{equation}
where the coefficients $a_n(m)$ satisfy the following properties.
For $0\leq n \leq m-1,$ $a_n(m) =0 $ and
for $n\geq m$,
\begin{equation}\label{anmfor}
    a_n(m) =\sum_{\a} (-1)^{n-m-p^M\a} \binom{n}{m+p^M\a},
\end{equation}
where $\a$ extends over all rational integers $\a$ satisfying
$$ 0 \leq \a \leq p^{-M}(n-m),$$
where $M= \lfloor \log_p m\rfloor +1 .$
Furthermore, the coefficients $a_n(m)$ allow the estimate
\begin{equation}\label{vpformula}
v_p(a_n(m)) \geq \lfloor \frac{n-1}{p^M-1} \rfloor.
\end{equation}
Switching the role in (\ref{kaimx}), let
$A_m(n)$ be the coefficients in the interpolation series
$$ \binom{x}{n} =\sum_{m=0}^{\infty}A_m(n)\k(m,x).$$
Then, for any integer $m\geq0,$
\begin{equation}\label{Bmform}
    B_m =\sum_{n=0}^{m}A_m(n)a_n,
\end{equation}
where the $A_m(n)$ are determined by the formula:
\begin{equation}\label{Amnfor}
A_m(n) =\left \{ \begin{array}{ll}
\binom{m}{n} ~&~\mbox{if}~~0\leq m\leq p-1;\\
\binom{m}{n} -\binom{m_{-}}{n}~   &~\mbox{otherwise.}
\end{array}
\right.
\end{equation}

\subsection{Basics on $p$-adic dynamical systems}

We begin by recalling  $1$-Lipschitz functions on $\Zp.$
\begin{definition}\label{defpk}
A function $f: \Zp\rightarrow\Zp $ is said to be $1$-Lipschitz if for all $x,y \in \Zp,$ we have
$$|f(x)-f(y)|\leq |x-y|.$$
\end{definition}

Typical examples of $1$-Lipschitz functions on $\Zp$ include polynomials with coefficients in $\Zp$ and a family of $\mathcal{B}$-functions in \cite{AK}.

It should be noted that a $1$-Lipschitz function $f$ is continuous on $\Zp,$  i.e., $f \in C(\Zp,\Qp).$
The following statements are equivalent to the $1$-Lipschitz condition.
\par
(\rm L1) $f(x) \equiv f(y) \pmod{p^n}$ whenever $x \equiv y \pmod{p^{n}}$ for any integer $n\geq1;$
\par
(\rm L2)  $ f(x+p^{n}\Zp) \subset f(x)+ p^{n}\Zp$ for all $x\in \Zp$ and any integer $n\geq1;$
\par
(\rm L3) $|f(x+y)-f(x)| \leq |y|$ for all $x,y \in \Zp$;
\par
(\rm L4) $|\Phi_1 f(x,y):=\frac{1}{y}(f(x+y)-f(x))| \leq 1$ for all $x \in \Zp$ and all $ y \not =0 \in \Zp;$
\par
(\rm L5)   $||\Phi_1 f(x,y)||_{\rm sup} \leq 1$ for all $y \not =0 \in \Zp.$


It should be noted that (L1) implies that a $1$-Lipschitz function $f: \Zp\rightarrow \Zp $ induces a sequence of reduced functions $f_{/n}~(n\geq1)$ on
quotient rings defined by
$$f_{/n}: \Zp/ p^{n}\Zp \rightarrow \Zp/ p^{n}\Zp, ~x+p^{n}\Zp \mapsto f(x)+p^{n}\Zp.$$

The following result characterizes the 1-Lipschitz property in the expansions of van der Put and Mahler.

\begin{proposition}\label{lipness}
Let $f$ be a continuous function represented by the expansions of  van der Put and Mahler in (\ref{Vanexp}) and
 (\ref{Maexp}).
 Then $f$ is 1-Lipschitz if and only if the two coefficients satisfy the following estimates.

(i)  $|B_{m}|\leq p^{-\lfloor \log_p m\rfloor}$ for all $m\geq 0$.

(ii) $|a_{n}|\leq p^{-\lfloor \log_p n\rfloor}$ for all $n\geq 0$.
\end{proposition}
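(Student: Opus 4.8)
The plan is to establish the cycle of implications $f\ \text{$1$-Lipschitz}\Rightarrow(i)\Rightarrow(ii)\Rightarrow f\ \text{$1$-Lipschitz}$, which yields both equivalences at once. First note that, by Definition~\ref{defpk}, a $1$-Lipschitz $f$ maps $\Zp$ into $\Zp$, so $\|f\|_{\rm sup}\le1$; orthonormality of the two bases then forces $|B_m|\le1$ and $|a_n|\le1$ for all $m,n$, so the cases $\lfloor\log_p m\rfloor=0$ of (i) and $\lfloor\log_p n\rfloor=0$ of (ii) are automatic, and it suffices to treat $m\ge p$ in (i) and $n\ge p$ in (ii). For the first implication I will use \eqref{vancf}: if $m\ge p$ and $s=\lfloor\log_p m\rfloor$, then $m\equiv m_-\pmod{p^{s}}$ with $m-m_-=m_sp^{s}$, $m_s\ne0$, so $|m-m_-|=p^{-s}$, and the Lipschitz inequality gives $|B_m|=|f(m)-f(m_-)|\le p^{-s}$, which is (i).

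For $(i)\Rightarrow(ii)$ I will pass from the van der Put coefficients to the Mahler coefficients. Fix $n\ge p$ and set $\ell=\lfloor\log_p n\rfloor$. By \eqref{anform}, $a_n=\sum_{m=0}^{n}a_n(m)B_m$, so it is enough to show $v_p\big(a_n(m)B_m\big)\ge\ell$ for every $0\le m\le n$. Writing $k=\lfloor\log_p m\rfloor$ (with $k=0$ for $m=0$), so that $0\le k\le\ell$ and $M=k+1$ in \eqref{anmfor} and \eqref{vpformula}, hypothesis (i) gives $v_p(B_m)\ge k$ and \eqref{vpformula} gives $v_p(a_n(m))\ge\big\lfloor\tfrac{n-1}{p^{k+1}-1}\big\rfloor$, so the whole implication reduces to the elementary inequality
\[
k+\Big\lfloor\frac{n-1}{p^{k+1}-1}\Big\rfloor\ \ge\ \ell\qquad(0\le k\le\ell),
\]
which is obvious for $k=\ell$ and, for $k<\ell$, follows from $n-1\ge p^{\ell}-1$ together with $\dfrac{p^{\ell}-1}{p^{k+1}-1}\ge p^{\ell-k-1}\ge\ell-k$ (the last bound being $p^{\,j-1}\ge2^{\,j-1}\ge j$ with $j=\ell-k\ge1$). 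Taking the minimum over $m$ gives $v_p(a_n)\ge\ell$, i.e.\ (ii).

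For $(ii)\Rightarrow f\ \text{$1$-Lipschitz}$, the crucial ingredient will be the Lipschitz-type estimate for binomial polynomials
\[
v_p\!\left(\binom{x}{n}-\binom{y}{n}\right)\ \ge\ v_p(x-y)-\lfloor\log_p n\rfloor\qquad(x,y\in\Zp,\ n\ge1),
\]
which is not among the stated preliminaries and which I will prove by Vandermonde's identity: with $h=x-y$ one has $\binom{x}{n}-\binom{y}{n}=\sum_{j=1}^{n}\binom{y}{n-j}\binom{h}{j}$, so it suffices that $v_p\big(\binom{h}{j}\big)\ge v_p(h)-\lfloor\log_p n\rfloor$ for $1\le j\le n$; when $j\le p^{\,v_p(h)}$ one computes $v_p\big(\binom{h}{j}\big)=v_p(h)-v_p(j)$ (because then $v_p(h-i)=v_p(i)$ for $1\le i\le j-1$) and concludes via $v_p(j)\le\log_p j\le\log_p n$, while $j>p^{\,v_p(h)}$ forces $\lfloor\log_p n\rfloor\ge v_p(h)$, so that $v_p\big(\binom{h}{j}\big)\ge0$ already suffices. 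Granting this, (ii) gives $|a_n|\le1$ (hence $f(\Zp)\subseteq\Zp$), and term by term in the Mahler expansion $v_p\big(a_n(\binom{x}{n}-\binom{y}{n})\big)\ge\lfloor\log_p n\rfloor+v_p(x-y)-\lfloor\log_p n\rfloor=v_p(x-y)$, so $|f(x)-f(y)|\le|x-y|$; this closes the cycle.

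I expect this last implication to be the main obstacle: the preliminary identities only run from the $B_m$'s to the $a_n$'s (via \eqref{anform} and the estimate \eqref{vpformula}), so for the converse one must supply the binomial Lipschitz bound above --- equivalently, a valuation estimate $v_p(A_m(n))\ge\lfloor\log_p m\rfloor-\lfloor\log_p n\rfloor$ for the coefficients in \eqref{Amnfor}, which the same Vandermonde computation yields (because $m\equiv m_-\pmod{p^{\lfloor\log_p m\rfloor}}$) and which then feeds into \eqref{Bmform} to give (i) directly. Everything else is routine bookkeeping with the floor function $\lfloor\log_p\cdot\rfloor$ and the elementary inequalities $p^{\,j-1}\ge j$ and $v_p(j)\le\lfloor\log_p n\rfloor$ for $1\le j\le n$.
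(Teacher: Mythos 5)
Your proof is correct, but it is genuinely different from the paper's, which gives no argument at all and simply defers part (i) and part (ii) to the literature. Your cyclic scheme ($f$ is 1-Lipschitz $\Rightarrow$ (i) $\Rightarrow$ (ii) $\Rightarrow$ $f$ is 1-Lipschitz) is an economical way to obtain both equivalences from a single new ingredient: the first implication is immediate from (\ref{vancf}) since $|m-m_-|=p^{-\lfloor\log_p m\rfloor}$; the step (i) $\Rightarrow$ (ii) uses only the transition data (\ref{anform}) and (\ref{vpformula}) already recalled in Section 2 together with your floor inequality, which checks out because $p^{\ell-k-1}(p^{k+1}-1)\le p^{\ell}-1$ and $p^{j-1}\ge j$; and the only genuinely new lemma is the valuation bound $v_p\bigl(\binom{x}{n}-\binom{y}{n}\bigr)\ge v_p(x-y)-\lfloor\log_p n\rfloor$, whose Vandermonde proof is sound (the computation $v_p\bigl(\binom{h}{j}\bigr)=v_p(h)-v_p(j)$ for $1\le j\le p^{v_p(h)}$ is correct since $v_p(h-i)=v_p(i)$ for $1\le i<j$, and the case $j>p^{v_p(h)}$ is handled by integrality). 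What the cited sources buy is a direct, independent proof of each equivalence from the definition of the 1-Lipschitz property; what yours buys is a self-contained argument that also makes visible the compatibility of the two basis-change formulas (\ref{anform}) and (\ref{Bmform}). One point you silently and correctly repair: under the paper's stated convention $\lfloor\log_p 0\rfloor=1$, condition (i) at $m=0$ would force $|f(0)|\le p^{-1}$ and the proposition would fail for constant functions, so the convention must be read as $\lfloor\log_p m\rfloor=0$ for $0\le m\le p-1$, as you do.
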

\begin{proof}
See \cite[Theorem 5]{AKY14} or \cite[Proposition 3.1]{J13} for (i)
and \cite{An2} or \cite[Theorem 3.53]{AK} for (ii).
\end{proof}

Let us briefly recall some elements of non-Archimedean dynamics on $\Zp.$ \\
A $p$-adic dynamical system on $\Zp$ is understood as a triple $(\Zp, \mu, f)$, where $f:\Zp\rightarrow \Zp$ is a measurable function and $\mu$ is the natural probability measure on $\Zp$, which is normalized such that $\mu(\Zp)=1$. The elementary $\mu$-measurable sets are the $p$-adic balls of radius $p^{-k}$; these are the sets of the form $a+ p^k\Zp$ for $a \in \Zp$ and an integer $k\geq0$. The measure of such a ball is defined as its radius, i.e., $\mu(a+ p^k\Zp)=1/p^k$.

\begin{definition} Let $(\Zp, \mu, f)$ be a $p$-adic dynamical system on $\Zp$.
A function $f:\Zp \rightarrow \Zp$ is said to be measure-preserving if $\mu(f^{-1}(S))=\mu(S)$ for each measurable subset $S \subset \Zp$. A measure-preserving function $f: \Zp \rightarrow \Zp$ is said to be ergodic if it has no proper invariant subsets, that is, if either $\mu(S)=1$ or $\mu(S)=0$ holds for any measurable subset $S \subset \Zp$ such that $f^{-1}(S) =S$.
\end{definition}
Let $\cal S$ be a finite set of $N\geq1$ elements and $f$ be a self-map on $\cal S$. Moreover, let $f^{n}$ denote the $n$-th iterate of $f$, with the convention that $f^{0}$ is the identity map on $\cal S.$  We say that
$\cal S$  forms a single cycle of $f$, or that $f$ is transitive on $\cal S$, if $\{ x_0, f(x_0) \cdots, f^{N-1}(x_0) \}=\cal S$ for any fixed initial point $x_0 \in S.$

The following equivalent statements for measure-preservation are of fundamental importance.

\begin{proposition}\label{mpeq}
Let $f: \Zp \rightarrow \Zp$  be a $1$-Lipschitz function.
The following are equivalent:

\par
{\rm(1)} $f$ is onto;
\par
{\rm(2)} $f$ is an isometry; that is, $|f(x)-f(y)|=|x-y|$ for all $x, y\in \Zp$;
\par
{\rm(3)} $f_{/n}$ is bijective  for all integers $n\geq 1$; and
\par
{\rm(4)} $f$ is measure-preserving.

\end{proposition}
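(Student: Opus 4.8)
The plan is to prove the four conditions equivalent by establishing a cycle of implications, say $(1)\Rightarrow(3)\Rightarrow(2)\Rightarrow(4)\Rightarrow(1)$, exploiting the $1$-Lipschitz hypothesis throughout. The key structural fact we lean on is condition (L1): a $1$-Lipschitz $f$ respects congruences mod $p^n$, so it descends to well-defined maps $f_{/n}$ on the finite rings $\Zp/p^n\Zp$, and moreover the maps $f_{/n}$ are compatible with the natural projections $\Zp/p^{n+1}\Zp\to\Zp/p^n\Zp$. This lets us pass freely between the ``infinite'' statements about $f$ and the ``finite'' statements about the $f_{/n}$ via the identification $\Zp=\varprojlim \Zp/p^n\Zp$.

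First I would show $(1)\Rightarrow(3)$. Fix $n\geq1$ and $\bar b\in\Zp/p^n\Zp$; lift it to $b\in\Zp$. By surjectivity there is $a\in\Zp$ with $f(a)=b$, hence $f_{/n}(\bar a)=\bar b$, so each $f_{/n}$ is surjective; being a surjection of a finite set to itself, it is bijective. Next, $(3)\Rightarrow(2)$: given $x\neq y$ in $\Zp$, let $n=v_p(x-y)$, so $x\equiv y\pmod{p^n}$ but $x\not\equiv y\pmod{p^{n+1}}$. The $1$-Lipschitz property already gives $|f(x)-f(y)|\leq p^{-n}$. If we had $|f(x)-f(y)|<p^{-n}$, i.e. $f(x)\equiv f(y)\pmod{p^{n+1}}$, then since $x\not\equiv y\pmod{p^{n+1}}$ the map $f_{/(n+1)}$ would identify two distinct classes, contradicting injectivity of $f_{/(n+1)}$. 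Hence $|f(x)-f(y)|=p^{-n}=|x-y|$, so $f$ is an isometry.

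For $(2)\Rightarrow(4)$: an isometry maps a ball $a+p^k\Zp$ bijectively onto the ball $f(a)+p^k\Zp$ of the same radius (surjectivity onto that ball follows because $f$ restricted to the complete space $a+p^k\Zp$ has image that is both open, by the isometry/open-mapping property, and closed, by compactness, inside $f(a)+p^k\Zp$), so preimages of balls have the correct measure; since the balls generate the $\sigma$-algebra and $\mu$ is determined by its values on them, $f$ is measure-preserving. Finally $(4)\Rightarrow(1)$: if $f$ preserves $\mu$ but is not onto, then $\Zp\setminus f(\Zp)$ is nonempty; because $f$ is continuous and $\Zp$ compact, $f(\Zp)$ is closed, so the complement is a nonempty open set and thus contains some ball $a+p^k\Zp$ of positive measure $p^{-k}$, which lies in the complement of the image and hence in $f^{-1}$ of nothing — more precisely $f^{-1}(a+p^k\Zp)=\varnothing$, so $0=\mu(f^{-1}(a+p^k\Zp))\neq\mu(a+p^k\Zp)$, a contradiction.

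The main obstacle is the measure-theoretic step $(2)\Rightarrow(4)$: one must check that an isometry of $\Zp$ is actually \emph{onto each} ball $f(a)+p^k\Zp$, not merely injective into it, before concluding the measure of the preimage is right; the clean way is to note that an injective $1$-Lipschitz self-map has bijective reductions $f_{/n}$ (by the finite pigeonhole argument, using that isometry forces $f_{/n}$ injective), which already circles back to (3) and makes the equivalences tight, and then surjectivity of $f$ itself follows by taking the inverse limit of the bijections $f_{/n}$. I would present the argument so that this compactness/inverse-limit point is made explicitly rather than glossed over.
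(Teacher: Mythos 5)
The paper does not prove this proposition at all; it simply cites Durand--Paccaut, so any comparison is with the standard argument rather than with an in-paper proof. Your cycle $(1)\Rightarrow(3)\Rightarrow(2)\Rightarrow(4)\Rightarrow(1)$ is the right architecture and each link is essentially sound, with one caveat. In your first rendering of $(2)\Rightarrow(4)$ you argue that the image of a ball is ``open and closed'' inside $f(a)+p^k\Zp$ and conclude it is the whole ball; this is not a valid inference in $\Zp$, which is totally disconnected --- every ball is a disjoint union of $p$ smaller clopen balls, so a nonempty clopen subset need not be everything. To your credit, you flag exactly this weakness in your closing paragraph and supply the correct repair: an isometry forces each $f_{/n}$ to be injective, hence bijective by pigeonhole on the finite ring, and surjectivity of $f$ then follows by compactness (or the inverse-limit identification $\Zp=\varprojlim \Zp/p^n\Zp$); once $f$ is a surjective isometry, $f^{-1}$ of any ball is a single ball of the same radius and measure-preservation follows since balls generate the Borel $\sigma$-algebra. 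With that repair made the primary argument (rather than an afterthought), the proof is complete and self-contained, which is arguably more useful to the reader than the paper's bare citation. The only presentational suggestion is to delete the clopen argument entirely rather than state it and then correct it.
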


\begin{proof}
See \cite[Proposition 4]{DP}.
\end{proof}

Regarding ergodicity, we have the following equivalent statements.

\begin{proposition}\label{ergeq}

Let $f:\Zp \rightarrow \Zp$  be a measure-preserving $1$-Lipschitz  function.
Then the following are equivalent:
\par
{\rm(1)} $f$ is minimal, that is, the forward orbit of $f$ at $x$ is dense in $\Zp$ for every $x$ $\in \Zp$;
\par
{\rm(2)} $f$ is ergodic;
\par
{\rm(3)} $f_{/n}$ is transitive on $\Zp/p^n\Zp$ for all integers $n\geq1$;
\par
{\rm(4)} $f$ is conjugate to the translation $t(x)=x+1$ on $\Zp;$ and
\par
{\rm(5)} $f$ is uniquely ergodic, that is, there is only one ergodic measure.
\end{proposition}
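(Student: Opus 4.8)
The plan is to treat condition (3) --- that every reduced map $f_{/n}$ is a single cycle on $\Zp/p^n\Zp$ --- as the hub and connect the remaining conditions to it, closing the loop as (1)$\Leftrightarrow$(3), (3)$\Rightarrow$(5)$\Rightarrow$(2)$\Rightarrow$(3), and (3)$\Leftrightarrow$(4). Two facts are used throughout: by Proposition~\ref{mpeq}, the measure-preserving $1$-Lipschitz hypothesis makes each $f_{/n}$ a bijection of the finite set $\Zp/p^n\Zp$; and $f$ permutes the level-$n$ balls $a+p^n\Zp$ exactly as $f_{/n}$ permutes their centres, so that $f^{-1}(a+p^n\Zp)$ is again a level-$n$ ball.

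First the combinatorial and topological equivalences. For (1)$\Leftrightarrow$(3): the balls $a+p^n\Zp$ form a basis of the topology, so the forward orbit of $x$ is dense iff for every $n$ the orbit of $x\bmod p^n$ under $f_{/n}$ exhausts $\Zp/p^n\Zp$; since $f_{/n}$ is a bijection of a finite set, one point having a full forward orbit is equivalent to $f_{/n}$ being a single cycle, which in turn gives a full forward orbit for every point. Hence minimality at every $x$ is precisely (3). For (2)$\Rightarrow$(3): if some $f_{/N}$ is not a single cycle, its cycle decomposition contains a cycle $C$ with $\emptyset\neq C\subsetneq\Zp/p^N\Zp$, and the union $\widetilde C$ of the corresponding level-$N$ balls is then a clopen set with $0<\mu(\widetilde C)<1$ and $f^{-1}(\widetilde C)=\widetilde C$ (because $f_{/N}^{-1}(C)=C$), contradicting ergodicity.

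Next the measure-theoretic and conjugacy steps. For (3)$\Rightarrow$(5): if $\nu$ is any $f$-invariant probability measure, then for each $n$ the $p^n$ balls of radius $p^{-n}$ lie in one $f$-orbit, so $\nu$-invariance forces them all to have equal $\nu$-mass, necessarily $p^{-n}$; thus $\nu$ agrees with $\mu$ on a generating algebra, $\nu=\mu$, and $f$ is uniquely ergodic. For (5)$\Rightarrow$(2): the set of $f$-invariant probability measures is nonempty, convex and weak-$*$ compact with the ergodic measures as its extreme points, so if it is a singleton, its unique element --- which is $\mu$, since $\mu$ is invariant --- is ergodic, i.e.\ $f$ is ergodic. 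For (3)$\Rightarrow$(4): since $t(x)=x+1$ reduces on $\Zp/p^n\Zp$ to a single cycle of length $p^n$ and, by hypothesis, so does $f_{/n}$, define $h_n\colon\Zp/p^n\Zp\to\Zp/p^n\Zp$ by $h_n(k):=f_{/n}^{\,k}(0)$ for $0\le k<p^n$; each $h_n$ is a bijection with $h_n\circ t_{/n}=f_{/n}\circ h_n$, and the compatibility $\pi_n\circ f_{/(n+1)}=f_{/n}\circ\pi_n$ coming from $1$-Lipschitzness yields at once $\pi_n\circ h_{n+1}=h_n\circ\pi_n$, so $h:=\varprojlim h_n$ is a bijection of $\Zp$ --- hence a $1$-Lipschitz isometry by Proposition~\ref{mpeq}, and therefore measure-preserving --- satisfying $h\circ t=f\circ h$. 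Conversely (4)$\Rightarrow$(3) is immediate, since a conjugacy descends to each quotient and makes $f_{/n}$ conjugate to the single cycle $t_{/n}$.

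The step I expect to be the main obstacle is the inverse-limit construction in (3)$\Rightarrow$(4): one must verify that the base-point cycle indexings $h_n$ are genuinely compatible under reduction modulo $p^n$ --- a $p^{n+1}$-cycle of $f_{/(n+1)}$ must cover the $p^n$-cycle of $f_{/n}$ exactly $p$-to-$1$ and in the correct cyclic order --- which is precisely where the $1$-Lipschitz hypothesis is indispensable. A secondary point needing care is that the conjugating homeomorphism $h$ inherits measure-preservation, which one obtains from Proposition~\ref{mpeq} once $h$ is known to be a $1$-Lipschitz bijection; beyond these, everything is either finite-permutation bookkeeping or the standard extreme-point description of invariant measures, for which I would cite standard references.
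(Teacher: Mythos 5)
The paper does not prove this proposition at all: it simply cites \cite[Theorem 6]{DP}, so your self-contained argument is necessarily a different route, and it is essentially the standard one. Your hub-and-spokes scheme around condition (3) is sound: (1)$\Leftrightarrow$(3) via the ball basis and the fact that a permutation of a finite set with one full forward orbit is a single cycle; (2)$\Rightarrow$(3) via the clopen invariant union of balls over a proper cycle (the key point, which you correctly supply, being that $f^{-1}$ of a level-$N$ ball is again a level-$N$ ball because $f_{/N}$ is a bijection); (3)$\Rightarrow$(5) by forcing any invariant $\nu$ to give equal mass $p^{-n}$ to the $p^n$ balls in a single cycle; and the inverse-limit construction of the conjugacy $h_n(k)=f_{/n}^{k}(0)$ in (3)$\Rightarrow$(4), whose compatibility $\pi_n\circ h_{n+1}=h_n\circ\pi_n$ follows exactly as you say from $\pi_n\circ f_{/(n+1)}=f_{/n}\circ\pi_n$ and the fact that $f_{/n}$ has order $p^n$.

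Two points need repair. First, in (4)$\Rightarrow$(3) you assert that ``a conjugacy descends to each quotient''; this is false for a general topological conjugacy, since a homeomorphism of $\Zp$ need not satisfy $h(x)\equiv h(y)\pmod{p^n}$ when $x\equiv y\pmod{p^n}$ (only the $h$ you \emph{built} in the forward direction is $1$-Lipschitz). The implication is still immediate, but argue it via (4)$\Rightarrow$(1): the translation $t$ is minimal because $\mathbb{N}$ is dense in $\Zp$, and minimality is a topological-conjugacy invariant. Second, in (5)$\Rightarrow$(2) you slide from ``only one ergodic measure'' to ``the set of invariant measures is a singleton''; these are indeed equivalent, but only after invoking Krein--Milman (the nonempty compact convex set of invariant measures is the closed convex hull of its extreme points, which are the ergodic measures), so that step should be stated explicitly rather than assumed. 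With these two sentences fixed, the proof is complete and matches what one finds in \cite{DP}.
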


\begin{proof}
See \cite[Theorem 6]{DP}.
\end{proof}

\begin{proposition}\label{minred}
A polynomial, $f \in \Zp[x]$, is minimal if and only if $\left(\Zp/p^{\mu}\Zp, f_{/\mu}\right)$ is minimal, where $\mu= 3$ if $p$ is $2$ or $3$ and $\mu = 2$ if $p \geq 5$.
\end{proposition}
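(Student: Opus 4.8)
The plan is to treat the two implications separately, the forward one being routine and the converse reducing to an inductive ascent through the levels. For ``only if'': a minimal polynomial $f$ has dense, hence --- $\Zp$ being compact --- full image, so $f$ is onto and therefore measure-preserving by Proposition \ref{mpeq}; Proposition \ref{ergeq} then gives that $f_{/n}$ is transitive on $\Zp/p^{n}\Zp$ for every $n\ge1$, and for $n=\mu$ this is precisely the minimality of $(\Zp/p^{\mu}\Zp,f_{/\mu})$. For ``if'': the reduction maps $\Zp/p^{n+1}\Zp\to\Zp/p^{n}\Zp$ carry $f_{/n+1}$ onto $f_{/n}$, so transitivity of $f_{/\mu}$ already forces transitivity of $f_{/n}$ for all $n\le\mu$; hence it suffices to prove the propagation step ``$f_{/n}$ transitive and $n\ge\mu$ $\Rightarrow$ $f_{/n+1}$ transitive'', after which every $f_{/n}$ is transitive, so $f$ is measure-preserving (Proposition \ref{mpeq}) and minimal (Proposition \ref{ergeq}).

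For the propagation step I would fix $n\ge\mu\ge2$ with $f_{/n}$ transitive and put $g=f^{p^{n}}$; as $f_{/n}$ is a single $p^{n}$-cycle we have $g(x)\equiv x\pmod{p^{n}}$, so $g(x)=x+p^{n}u_{n}(x)$ for some $u_{n}\colon\Zp\to\Zp$. Since every iterate of $f\in\Zp[x]$ is again a polynomial over $\Zp$, one has the expansions $f^{m}(x+h)\equiv f^{m}(x)+h\,(f^{m})'(x)\pmod{h^{2}\Zp}$ with $(f^{m})'(x)=\prod_{i=0}^{m-1}f'(f^{i}(x))$ (and a similar second-order version whose quadratic coefficient still lies in $\Zp[x]$, so no denominators appear even for $p=2$). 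Applying this with $m=p^{n}$ and $h\in p^{n}\Zp$ shows that on each fibre $\{x_{0}+p^{n}t:t\in\Zp/p\Zp\}$ of $\Zp/p^{n+1}\Zp\to\Zp/p^{n}\Zp$ the map $g$ acts as the affine map $t\mapsto\overline{D_{n}(x_{0})}\,t+\overline{u_{n}(x_{0})}$ on $\Zp/p\Zp$, where $D_{n}(x_{0})=\prod_{i=0}^{p^{n}-1}f'(f^{i}(x_{0}))$ and the bar is reduction modulo $p$. Because the orbit of $x_{0}$ exhausts $\Zp/p^{n}\Zp$, meeting each residue mod $p$ exactly $p^{n-1}$ times, $D_{n}(x_{0})\equiv\bigl(\prod_{r\bmod p}\overline{f'}(r)\bigr)^{p^{n-1}}\pmod p$, and the same fibre analysis at level $1\to2$ applied to the hypothesis that $f_{/2}$ is transitive (available since $\mu\ge2$) forces $\prod_{r\bmod p}\overline{f'}(r)\equiv1\pmod p$. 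Hence $\overline{D_{n}(x_{0})}=1$, the fibre map is the translation $t\mapsto t+\overline{u_{n}(x_{0})}$, and $f_{/n+1}$ is transitive iff $\overline{u_{n}(x_{0})}\ne0$ --- a condition which, once forced at one point of the $f_{/n}$-cycle, automatically holds along the whole cycle. Everything thus reduces to the nonvanishing of $\overline{u_{n}}$.

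To get this I would compare $u_{n}$ with $u_{n-1}$ (level $n-1$ being already transitive): writing $h=f^{p^{n-1}}$ and $h(x)=x+p^{n-1}u_{n-1}(x)$, iteration gives $p\,u_{n}(x)=\sum_{j=0}^{p-1}u_{n-1}(h^{j}(x))$, and the second-order expansion of $h$ together with $p^{-(n-1)}(h^{j}(x)-x)\equiv j\,u_{n-1}(x)\pmod p$ and $h'(x)=D_{n-1}(x)\equiv1\pmod p$ evaluates $\sum_{j}u_{n-1}(h^{j}(x))$ modulo $p^{2}$ as $p\,u_{n-1}(x)$ plus corrections governed by the power sums $\sum_{j=0}^{p-1}j$ and $\sum_{j=0}^{p-1}j^{2}$ mod $p$ (the $j^{2}$-term carrying an extra factor $p^{\,n-2}$, hence gone once $n\ge3$). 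For $p\ge5$ both power sums vanish mod $p$, so $\overline{u_{n}}=\overline{u_{n-1}}$ for all $n\ge2$; as $\overline{u_{1}}\ne0$ is equivalent to transitivity of $f_{/2}$, this yields $\overline{u_{n}}\ne0$ throughout, so $\mu=2$ works. For $p=3$ only the linear power sum vanishes, so $\overline{u_{n}}=\overline{u_{n-1}}$ holds for $n\ge3$ but may fail for $n=2$; starting from $\overline{u_{2}}\ne0$ (equivalent to transitivity of $f_{/3}$) the rest follows, so $\mu=3$ is needed and suffices. For $p=2$ even the linear sum fails and the recursion becomes multiplicative, $\overline{u_{n}}(x)=\overline{u_{n-1}}(x)\cdot\overline{\bigl((1+h'(x))/2\bigr)}$, which is trivial precisely when $h'(x)=D_{n-1}(x)\equiv1\pmod 4$; one checks this for $n\ge3$ (essentially because an odd square is $\equiv1\pmod4$, so the product of the $2^{n-1}$ factors, each depending only on its argument mod $4$, is a square mod $4$), obtaining again $\overline{u_{n}}=\overline{u_{n-1}}$ for $n\ge3$, so $\mu=3$ works while the step $2\to3$ genuinely fails.

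The main obstacle is this last paragraph, in particular the case $p=2$: the mod-$4$ refinement $D_{n-1}\equiv1\pmod 4$ that makes the multiplicative recursion collapse has to be squeezed out of the hypothesis that $f_{/3}$ is transitive rather than from a single-level fibre computation, and the Taylor bookkeeping must be made uniform in $n$; this is exactly why $p=2$, like $p=3$, needs level $3$ and a treatment apart from the clean $p\ge5$ case. For the base cases $n=\mu$ one could instead invoke the known classification of transitive polynomial maps on $\Zp/p^{\mu}\Zp$.
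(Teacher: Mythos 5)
Your overall strategy---analyzing $g=f^{p^{n}}$ fibrewise as the affine map $t\mapsto D_n(x_0)t+u_n(x_0)$ on $\Zp/p\Zp$, extracting $\prod_{r\bmod p}f'(r)\equiv 1\pmod p$ from transitivity modulo $p^{2}$, and then running the recursion $p\,u_n(x)=\sum_{j=0}^{p-1}u_{n-1}(h^{j}(x))$ with power-sum corrections---is the standard one; note that the paper itself gives no proof but simply cites \cite[Proposition 9]{DP}, whose argument is of exactly this type. Your forward direction is fine, and for $p\ge5$ and $p=3$ the bookkeeping is correct: the linear correction is weighted by $\sum_{j}j\equiv0\pmod p$, the quadratic one carries an extra factor $p^{\,n-2}$ and so only threatens the step $2\to3$ (which for $p=3$ is bypassed by starting at level $\mu=3$), and the higher Hasse-derivative terms are harmless since their coefficients lie in $\Zp$. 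So those two branches are a sound, if compressed, reproof of the cited result.

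The genuine gap is the case $p=2$, and it sits at the very first ascent $3\to4$. There $h=f^{4}$ and your multiplicative recursion needs $h'(x)=\prod_{i=0}^{3}f'(f^{i}(x))\equiv1\pmod 4$, i.e. $f'(0)f'(1)f'(2)f'(3)\equiv1\pmod4$, because the arguments $f^{i}(x)$ run over the residues modulo $4$ exactly once each. Your parenthetical justification (``the product \dots is a square mod $4$'') is valid only when each residue class modulo $4$ is visited an even number $2^{\,n-3}$ of times, i.e. for the steps $n\to n+1$ with $n\ge4$; at $n=3$ the multiplicity is $1$ and nothing squares. So the congruence has to be squeezed out of the hypothesis that $f_{/3}$ is transitive---which requires a genuine second-order analysis at levels $1$--$3$ (or an appeal to Larin's/Anashin's $p=2$ criterion, which is essentially the statement being proved)---and you acknowledge this as ``the main obstacle'' without closing it. Your fallback of invoking the classification of transitive maps on $\Z_{2}/8\Z_{2}$ does not obviously repair it either: the problem is not the base case but this first propagation step, and $f'\bmod 4$ is not visibly determined by the induced map modulo $8$, so one would still have to prove that $\#\{y\bmod 4:\ f'(y)\equiv3\pmod 4\}$ is even for every polynomial transitive modulo $8$. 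Until that single congruence is established, the converse implication for $p=2$ remains unproven.
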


\begin{proof}
See \cite[Proposition 9]{DP}.
\end{proof}

\subsection{Uniformly differentiable modulo $p$ functions}

In this subsection, we introduce differentiability modulo $p^k$ of $p$-adic functions on $\Zp$. This notion is of high importance in applications to automata and combinatorics. The reader may refer to \cite{AK} for more details.

\begin{definition}
A function $f:\Zp \rightarrow \Zp$ is said to be uniformly differentiable modulo $p^k$ if there exist a positive integer $N$ and $\partial_{k}f(u) \in \Qp$
such that for any integer $s\geq N$ and any $h\in \Zp,$ the congruence
\begin{eqnarray}\label{diffmodpk}
f(u+p^s h) \equiv f(u) +p^sh\partial_{k}f(u) \pmod{p^{k+s}}
\end{eqnarray}
holds, where $\partial_{k}f(u)$ does not depend on $s$ and $h.$  The smallest of these $N$ is denoted by
$N_{k}(f).$
\end{definition}

From now onward, we assume that a uniformly differentiable modulo $p$ function $f:\Zp \rightarrow \Zp$ with $N_{1}(f)=1,$ satisfies the 1-Lipschitz condition, being equivalent to that $\partial_{1}f(u)$ lies in $\Z_p$ for all $u\in \Z_p.$
Such functions include polynomials with coefficients in $\Zp$, analytic functions whose Taylor expansion coefficients lie in the $p$-adic integers
and rational functions with $\Zp$-coefficients whose denominator has no zero modulo $p.$ From the 1-Lipschitz condition of $f$, it is evident that for any integer $s\geq1$ and any $h\in \Zp,$ there exists a $\partial_{1}f(u)$ in $\Zp$ such that the congruence holds:
\begin{eqnarray}\label{diffmodp1}
f(u+p^s h) \equiv f(u) +p^sh\partial_{1}f(u) \pmod{p^{s+1}}.
\end{eqnarray}

In terms of van der Put's coefficients, the crucial relations follows by taking $u=r,$ and $h=\ell$ in (\ref{diffmodp1}): for  all $ s \geq  1$, $0\leq r<p^s$, and $1 \leq \ell <p$,
\begin{eqnarray}
B_{r+ \ell p^s} &\equiv& \ell B_{r+ p^s} \pmod{p^{s+1}}; \label{ply2}\\
B_{r+ \ell p^s} &\equiv& \ell p^s \partial_{1}f(r)\pmod{p^{s+1}}. \label{ply3}
\end{eqnarray}
The periodicity of  $\partial_{1}f(u)$ from \cite[Proposition 3.32]{AK} implies the following:
\begin{eqnarray}\label{equiv2}
r \equiv \bar{r}  \pmod{p} ~{\rm with }~0 \leq \bar{r} <p~  \Rightarrow \partial_{1}f(r) \equiv \partial_{1}f(\bar{r}) \pmod{p}.
\end{eqnarray}
Through (\ref{ply2}), (\ref{ply3}), and (\ref{equiv2}), we have the following equivalence in terms of the normalized van der Put coefficients $b_{r+lp^s}$ from Proposition \ref{lipness} defined by $b_{r+lp^s}=p^{-s}B_{r+lp^s}.$
For all $s\geq 1$ and $0 \leq \bar{r} <p$ with $\bar{r} \equiv r\pmod{p},$
 we have
\begin{eqnarray}\label{equsns3}
b_{r+ p^s}\equiv \partial_{1}f(r) \equiv \partial_{1}f(\bar{r}) \equiv b_{\bar{r}+ p}\pmod{p}.
\end{eqnarray}
Equation (\ref{equsns3}) immediately implies the following equivalence:
\begin{eqnarray}\label{equivred}
b_{r+ p^s}  \not \equiv 0 \pmod{p} \Leftrightarrow b_{\bar{r}+ p}  \not \equiv 0  \pmod{p}.
\end{eqnarray}

\section{Estimation of the Mahler coefficients}

\subsection{Coefficients estimation for $p \geq 3$}

We estimate the Mahler coefficients of  uniformly differentiable modulo $p$, 1-Lipschitz functions on $\Zp.$ This result was first proved by Memi$\acute{c}$
whose proof involves a little tricky induction arguments based on the formula in \cite[Lemma 2.2]{Me20b} for Mahler's coefficients in terms of the van der Put coefficients.

We present a natural alternative proof by  resorting to close relations between two bases of Mahler and van der Put.
Its merit applies to the case of $p=2,$ where Memi$\acute{c}$'s induction arguments do not work.

The following two lemmas are essentially crucial in computations involving binomial coefficients.
\begin{lemma}\label{valpro}
Let $l$ be an integer such that $1 \leq l \leq p-1$ and let $s$ and $j$ be positive integers such that $j<lp^s.$
Then $v_{p}(\binom{lp^s}{j})=s-v_p(j).$
\end{lemma}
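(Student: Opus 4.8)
The claim is that for $1\le l\le p-1$, $s\ge 1$, and $0<j<lp^s$, we have $v_p\binom{lp^s}{j}=s-v_p(j)$. The natural tool is Kummer's theorem (equivalently Legendre's formula): $v_p\binom{a}{b}$ equals the number of carries when adding $b$ to $a-b$ in base $p$. I would, however, prefer the cleaner route through the identity $b\binom{a}{b}=a\binom{a-1}{b-1}$, which the paper has already recorded as (\ref{binfor}) and promised to use freely.

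First I would apply (\ref{binfor}) with $a=lp^s$ and $b=j$: $j\binom{lp^s}{j}=lp^s\binom{lp^s-1}{j-1}$. Taking $v_p$ of both sides gives
\begin{equation*}
v_p(j)+v_p\binom{lp^s}{j}=v_p(l)+s+v_p\binom{lp^s-1}{j-1}=s+v_p\binom{lp^s-1}{j-1},
\end{equation*}
since $1\le l\le p-1$ forces $v_p(l)=0$. Thus the lemma reduces to showing $v_p\binom{lp^s-1}{j-1}=0$, i.e. that this binomial coefficient is a $p$-adic unit, for $1\le j\le lp^s-1$. This is the heart of the matter.

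To see that $\binom{lp^s-1}{j-1}$ is prime to $p$, I would write $lp^s-1$ in base $p$. Since $1\le l\le p-1$, the base-$p$ digits of $lp^s-1=(l-1)p^s+(p^s-1)$ are: the lowest $s$ digits are all $p-1$, and the digit in position $s$ is $l-1$ (with no higher digits). Any $m$ with $0\le m\le lp^s-1$ then has every base-$p$ digit $\le$ the corresponding digit of $lp^s-1$ (each of the bottom $s$ digits of $m$ is at most $p-1$, and the digit in position $s$ is at most $l-1$ because $m<lp^s$, while all higher digits vanish). By Lucas' theorem, $\binom{lp^s-1}{m}\equiv\prod_i\binom{(lp^s-1)_i}{m_i}\not\equiv 0\pmod p$, so $v_p\binom{lp^s-1}{m}=0$. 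Applying this with $m=j-1$ completes the argument. (Alternatively, Kummer's theorem: adding $m$ to $lp^s-1-m$ produces no carries precisely because each digit of $m$ is bounded by the corresponding digit of $lp^s-1$.)

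The only genuinely delicate point is the digit bookkeeping for $lp^s-1$ — one must be careful that the "$-1$" turns the bottom block into all $(p-1)$'s and leaves the digit $l-1$ in place, and that the hypothesis $j<lp^s$ (not merely $j\le p^{s+1}-1$) is exactly what pins the position-$s$ digit of $j-1$ to be at most $l-1$. Everything else is a one-line application of (\ref{binfor}) and Lucas/Kummer. I would also note the degenerate case $l=1$ is included and causes no trouble: then $lp^s-1=p^s-1$ has all digits $p-1$, so every $\binom{p^s-1}{m}$ is a unit.
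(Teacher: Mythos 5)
Your proof is correct, but it follows a genuinely different route from the paper's. The paper proves the lemma by a direct application of Legendre's formula: it writes $j=j_rp^r+\cdots+j_sp^s$ with $r=v_p(j)$, computes the base-$p$ expansion of $lp^s-j$ explicitly (tracking the borrow that starts at position $r$), and evaluates $v_p\bigl(\binom{lp^s}{j}\bigr)=\frac{1}{p-1}\bigl(l_p(j)+l_p(lp^s-j)-l_p(lp^s)\bigr)=s-r$ from the digit sums. You instead use the absorption identity $j\binom{lp^s}{j}=lp^s\binom{lp^s-1}{j-1}$ from (\ref{binfor}) to pull out $v_p(j)$ and the factor $p^s$ at once, reducing everything to the claim that $\binom{lp^s-1}{j-1}$ is a $p$-adic unit, which you settle by Lucas (or Kummer) after observing that the digits of $lp^s-1$ are $p-1$ in positions $0,\dots,s-1$ and $l-1$ in position $s$, and hence dominate the digits of any $m<lp^s$. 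Your digit bookkeeping is accurate (in particular, $j<lp^s$ is exactly what caps the position-$s$ digit of $j-1$ at $l-1$, and $v_p(l)=0$ since $1\le l\le p-1$). What each approach buys: the paper's is self-contained modulo Legendre and makes the dependence on $r=v_p(j)$ visible through the carry structure of $j+(lp^s-j)$; yours isolates the valuation contribution algebraically in one line and relegates the combinatorics to the single, cleaner number $lp^s-1$, at the cost of invoking Lucas' theorem (which the paper does use freely elsewhere, so this is no real cost). Either proof is acceptable.
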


\begin{proof}
The proof is identical to  the case where $l=1$ in \cite[Lemma 3.2]{MM}.
For the sake of completeness, we provide a proof.
Write $j=j_{r}p^r + \cdots + j_{s-1}p^{s-1} +j_sp^s$ in $p$-adic form, where $r=v_p(j)$ .
Then $$lp^s-j=(p-j_r)p^r + \sum_{i=r+1}^{s-1}(p-1-j_i)p^i +(l-1-j_s)p^s.$$
Thus, we obtain
$$l_p(j) +l_p(lp^s-j) =(p-1)(s-r)+ l,$$
where $l_p(n)$ denotes the sum of $p$-adic digits of $n$ in $p$-adic form.
Using the Legendre formula, the $p$-adic valuation of a binomial coefficient $\binom{lp^s}{j}$ is given by
 $$v_{p}(\binom{lp^s}{j})=
 \frac{1}{p-1}( l_p(j) +l_p(lp^s-j)-l_p(lp^s))=s-r.$$
\end{proof}

\begin{lemma}\label{bipro}
Let $s$ be an integer $\geq 2$ and $\a, \b$ be non-negative integers $ < p^{s-1}$ and let $r, l$ be integers such that $1 \leq r, l <p.$ 
Then
	\begin{enumerate}
			\item[(1)]
$$ \binom{\a + p^s}{\b +lp^{s-1}}  \equiv \binom{\a}{\b}\binom{p}{l}  \pmod{p^2};$$

\item[(2)]	
\begin{equation*}
\binom{\a + rp^{s-1}+ p^s}{\b +lp^{s-1}}
\equiv \left \{ \begin{array}{ll}
			\binom{\a +rp^{s-1}}{\b +lp^{s-1}} +\sum_{j=1}^{l}\binom{\a +rp^{s-1}}{\b+(l-j)p^{s-1}} \binom{p}{j}  \pmod{p^2}  ~\mbox{if}~ l\leq r;\\
\sum_{j=l-r}^{l}\binom{\a +rp^{s-1}}{\b+(l-j)p^{s-1}} \binom{p}{j}  \pmod{p^2}  ~\mbox{if}~ l > r;
\end{array}
\right.
\end{equation*}

		\item[(3)]	
\begin{equation*}
\binom{\a + rp^{s-1}+ p^s}{\b +lp^{s-1} +p^s}
\equiv \left \{ \begin{array}{ll}
\binom{\a +rp^{s-1}}{\b +lp^{s-1}} +\sum_{j=1}^{r-l}\binom{\a +rp^{s-1}}{\b+(l+j)p^{s-1}} \binom{p}{j}  \pmod{p^2} ~&~\mbox{if}~l<r;\\
\binom{\a +rp^{s-1}}{\b +lp^{s-1}}\pmod{p^2}~   &~\mbox{if}~ l=r.
\end{array}
\right.
\end{equation*}

		\end{enumerate}
	
\end{lemma}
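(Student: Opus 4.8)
The three congruences all concern expanding binomial coefficients $\binom{A}{B}$ with $A$ and $B$ carrying a "$p^s$-part" and a "$p^{s-1}$-part", modulo $p^2$. The basic tool is Lucas-type reasoning refined to modulus $p^2$ via the Vandermonde identity $\binom{m+n}{k}=\sum_{i}\binom{m}{i}\binom{n}{k-i}$ together with Lemma \ref{valpro}, which tells us that $v_p\!\left(\binom{p^s}{j}\right)=s-v_p(j)$ and, more generally, $v_p\!\left(\binom{lp^s}{j}\right)=s-v_p(j)$ for $1\le l\le p-1$ and $0<j<lp^s$. The plan is to apply Vandermonde with $m$ the $p^s$-contribution and $n$ the lower part, and then discard every term whose $p$-adic valuation is $\ge 2$.

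For part (1): write $\binom{\a+p^s}{\b+lp^{s-1}}=\sum_{i=0}^{\b+lp^{s-1}}\binom{p^s}{i}\binom{\a}{\b+lp^{s-1}-i}$. Since $\b<p^{s-1}$, the lower binomial $\binom{\a}{\b+lp^{s-1}-i}$ is nonzero only when $\b+lp^{s-1}-i\le\a<p^{s-1}$, forcing $i\ge lp^{s-1}$, hence $i\in\{lp^{s-1},lp^{s-1}+1,\dots\}$. For such $i$ with $i>lp^{s-1}$ one has $v_p\!\left(\binom{p^s}{i}\right)=s-v_p(i)\le s-0$; one checks $v_p(i)\le s-2$ forces the term to vanish mod $p^2$ — more carefully, the only surviving index is $i=lp^{s-1}$ itself (for which $v_p\!\left(\binom{p^s}{lp^{s-1}}\right)=s-(s-1)=1$ and $\binom{p^s}{lp^{s-1}}\equiv \binom{p}{l}p^{?}$...). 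Here I would instead first prove the cleaner identity $\binom{p^s}{lp^{s-1}}\equiv\binom{p}{l}\pmod{p^2}$ (itself a consequence of Lucas mod $p^2$, or of repeatedly applying Vandermonde), and then show all other indices $i$ contribute $0$ mod $p^2$ because either $\binom{\a}{\cdot}=0$ or $v_p\!\left(\binom{p^s}{i}\right)\ge2$. This yields $\binom{\a+p^s}{\b+lp^{s-1}}\equiv\binom{p}{l}\binom{\a}{\b}\pmod{p^2}$.

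Parts (2) and (3) are the same mechanism but with the middle term $rp^{s-1}$ present on the bottom of $A$ as well. Here I would write $A=\a+rp^{s-1}+p^s$ and split via Vandermonde into a sum over how much of $B$ is drawn from the "$p^s$ block": $\binom{\a+rp^{s-1}+p^s}{B}=\sum_{j}\binom{p^s}{jp^{s-1}}\binom{\a+rp^{s-1}}{B-jp^{s-1}}\pmod{p^2}$, where only multiples $jp^{s-1}$ of $p^{s-1}$ survive because $\binom{p^s}{i}$ with $p^{s-1}\nmid i$ has valuation $\ge2$ by Lemma \ref{valpro}, and then use $\binom{p^s}{jp^{s-1}}\equiv\binom{p}{j}\pmod{p^2}$. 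The range of admissible $j$ is dictated by the constraint $0\le B-jp^{s-1}\le \a+rp^{s-1}$, i.e. by how the "$p^{s-1}$-digit" $l$ of $B$ compares with the digit $r$ of $A$; carving up the sum according to $l\le r$ versus $l>r$ (for part (2), where $B=\b+lp^{s-1}$) and $l<r$ versus $l=r$ (for part (3), where $B=\b+lp^{s-1}+p^s$ so that a unit is already consumed by matching the $p^s$ blocks) produces exactly the case distinctions in the statement. In part (3) the term $\binom{p}{0}=1$ with $j=0$ contributes $\binom{\a+rp^{s-1}}{\b+lp^{s-1}}$, and the extra sum ranges over $1\le j\le r-l$ when $l<r$ while it is empty when $l=r$.

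The main obstacle is bookkeeping, not depth: one must be careful that "$\b+lp^{s-1}-i$" or "$B-jp^{s-1}$" stays in the valid range and that the truncated binomials $\binom{\a+rp^{s-1}}{\b+(l\pm j)p^{s-1}}$ are precisely what appears — in particular, since $\a,\b<p^{s-1}$, the quantity $\binom{\a+rp^{s-1}}{\b+kp^{s-1}}$ for $0\le k\le r$ is itself governed by Vandermonde as $\binom{rp^{s-1}}{kp^{s-1}}\binom{\a}{\b}$ plus higher-valuation terms, but we do not need to unfold it further; we only need it as a closed symbol. So the real work is (i) proving the key congruence $\binom{p^s}{jp^{s-1}}\equiv\binom{p}{j}\pmod{p^2}$, (ii) the valuation-based suppression of all non-$p^{s-1}$-aligned indices via Lemma \ref{valpro}, and (iii) tracking the index ranges to match the stated piecewise formulas. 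I would handle (i) once as a preliminary sublemma and then treat (1), (2), (3) uniformly.
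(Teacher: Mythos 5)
Your plan is correct and follows essentially the same route as the paper's proof: a Vandermonde expansion isolating the $p^s$ block, Lemma \ref{valpro} to discard all indices not aligned with $p^{s-1}$ (their valuation being at least $2$), and the congruence $\binom{p^s}{jp^{s-1}}\equiv\binom{p}{j}\pmod{p^2}$ to identify the surviving terms, with the case split on $l$ versus $r$ coming from the admissible index range. Your decision to isolate $\binom{p^s}{jp^{s-1}}\equiv\binom{p}{j}\pmod{p^2}$ as an explicit sublemma is a sensible refinement of the paper's terse appeal to ``Lucas's congruence,'' which as stated is only a mod-$p$ result.
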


\begin{proof}
This follows from a direct calculation using Lemma \ref{valpro} with $l=1$. We prove only part (3) for $l<r.$
We use the Vandermonde identity:
$$ \binom{\a + rp^{s-1}+ p^s}{\b +lp^{s-1} +p^s}=\sum_{i=0}^{\b +lp^{s-1} +p^s} \binom{\a + rp^{s-1}}{i}
\binom{p^s}{\b +lp^{s-1} +p^s-i}.$$
By Lemma \ref{valpro}, we only collect indices $i\leq \a + rp^{s-1}$ such that $\b +lp^{s-1} +p^s-i=jp^{s-1}$ for $1 \leq j\leq p.$ Such $i$'s are of the form $ i=\b + (p+l-j)p^{s-1}$ with $p+l-j\leq r.$
By Lucas's congruence,
\begin{eqnarray*}
  \binom{\a + rp^{s-1}+ p^s}{\b +lp^{s-1} +p^s} &\equiv & \sum_{j=p+l-r}^{p} \binom{\a + rp^{s-1}}{ \b + (p+l-j)p^{s-1}}
\binom{p^s}{jp^{s-1}}\pmod{p^2} \\
  &=& \binom{\a +rp^{s-1}}{\b +lp^{s-1}}+ \sum_{j=p+l-r}^{p-1} \binom{\a + rp^{s-1}}{ \b + (p+l-j)p^{s-1}}
\binom{p}{j}\pmod{p^2}.
\end{eqnarray*}
By changing $p-j$ to $j$ in the second sum we have a desired result of  part (3) for $l<r.$

A detailed verification of the rest is left for the reader.
\end{proof}

In the course of the proof of our main Theorem \ref{Mcop3over}, we require Proposition \ref{ABC} below to avoid lengthy computations involving binomial coefficients.
To this end,  we define the following quantities
associated with $\{ \d^{i}(r)\}_{1\leq i \leq 3}$ in (\ref{sdel1}), (\ref{sdel2}) and (\ref{sdel3}). For an odd prime $p$ and an integer  $r$ such that $1\leq r \leq p-1$,
put
\begin{eqnarray}
  A &:=& \sum_{l=1}^{r} \sum_{j=1}^{l} (-1)^{r-l+1}l  \binom{r}{l-j} \binom{p}{j}, \label{defA} \\
  B &:=& \sum_{l=r+1}^{p-1} \sum_{j=l-r}^{l} (-1)^{r-l+1} l  \binom{r}{l-j} \binom{p}{j}, \label{defB}  \\
  C &:=& \sum_{l=1}^{r-1} \sum_{j=1}^{r-l} (-1)^{r-l} l  \binom{r}{l+j} \binom{p}{j}.
\end{eqnarray}

\begin{proposition}\label{ABC}
  Let $p$ and $r$ be the same as before. Then $A, B,$ and $C$ satisfy the following property:
  $$  A+ B+C \equiv 0 \pmod{p^2}.$$
\end{proposition}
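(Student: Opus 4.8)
The plan is to combine the three double sums into a single expression and reorganize the summation so that the inner sum over $j$ is grouped with a fixed value of the binomial argument in $\binom{r}{\cdot}$. First I would substitute $k=l-j$ in $A$ and $B$ and $k=l+j$ in $C$, so that in each of the three sums the factor $\binom{r}{k}$ appears with $0\le k\le r$ (or $k\le p-1$ in $B$), and the remaining data is a sum of terms $(-1)^{r-l}l\binom{p}{j}$ with $j=l-k$ (for $A$, $B$) or $j=k-l$ (for $C$). The key observation I expect to exploit is that $\binom{p}{j}\equiv 0\pmod{p}$ for $1\le j\le p-1$, so modulo $p^2$ every term in $A+B+C$ is already divisible by $p$; hence it suffices to show that the ``reduced'' sum, obtained by writing $\binom{p}{j}=p\cdot\frac{(-1)^{j-1}}{j}+O(p^2)$ (using $\binom{p}{j}=\frac{p}{j}\binom{p-1}{j-1}$ and $\binom{p-1}{j-1}\equiv(-1)^{j-1}\pmod p$), vanishes modulo $p$. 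This converts the claim into an identity of the shape $\sum (\text{rational coefficient})\binom{r}{k}\equiv 0\pmod p$, i.e. a congruence among harmonic-type sums weighted by binomial coefficients $\binom{r}{k}$.

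After that reduction, I would fix $k$ (the index of $\binom{r}{k}$) and collect the coefficient of $\binom{r}{k}$ across $A$, $B$, $C$. The contribution from $A$ and $B$ together ranges over all $l$ with $l>k$ and $1\le l-k\le \min(l,\,p-1)$, giving a sum $\sum_{l>k}(-1)^{r-l+1}l\cdot\frac{(-1)^{l-k-1}}{l-k}$; the contribution from $C$ ranges over $l<k$ with $1\le k-l\le r-l$, i.e. $l<k\le r$, giving $\sum_{l<k}(-1)^{r-l}l\cdot\frac{(-1)^{k-l-1}}{k-l}$. I would then write $l=(l-k)+k$ (resp. $l=k-(k-l)$) to split each inner sum into a ``harmonic'' piece $\sum \frac{(-1)^{m}}{m}$ and a ``$\pm 1$'' piece $\sum(-1)^{m}$, where $m$ runs over an interval. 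The $\pm 1$ pieces telescope to something bounded (essentially $0$ or $\pm 1$), and the harmonic pieces, being truncations of $\sum_{m=1}^{p-1}\frac{(-1)^{m}}{m}$, are governed by Wolstenholme-type congruences: $\sum_{m=1}^{p-1}\frac{(-1)^{m-1}}{m}\equiv 0\pmod p$ for $p\ge 3$ (in fact $\pmod{p^2}$ for $p\ge 5$, but $\pmod p$ suffices here). The ranges have to be tracked carefully because the upper limit $p-1$ in $B$ truncates some sums, but since the missing terms correspond to $j\ge p$ where $\binom{p}{j}=0$ anyway, extending $l$ up to $k+p-1$ costs nothing modulo $p^2$; this is what makes the $A+B$ contribution a clean truncated alternating harmonic sum.

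The main obstacle I anticipate is bookkeeping of the summation ranges when I merge $A$ and $B$: the condition ``$j=l-k$ with $1\le j\le l$ and $k\le r$'' in $A$, versus ``$j\ge l-r$'' in $B$, together with the constraint $j\le p-1$ from $\binom{p}{j}$, must be shown to assemble into the single clean range $k<l\le k+p-1$ (equivalently $1\le l-k\le p-1$) for each fixed $k\in\{0,1,\dots,r\}$ — and similarly the $C$-range must be seen as exactly the complementary piece $\max(0,k-?) \le l < k$. Once the ranges are correct, each coefficient-of-$\binom{r}{k}$ reduces to an expression of the form (constant)$\cdot\big(\sum_{m=1}^{p-1}\frac{(-1)^{m-1}}{m}\big)$ plus a telescoping remainder, and Wolstenholme plus elementary cancellation finish it. An alternative, possibly cleaner route that I would keep in reserve: recognize $A$, $B$, $C$ as the three pieces of the discrepancy between $\sum_m (m-m_-)\,a_m$-type quantities computed two ways from Lemma \ref{bipro}, so that $A+B+C$ is literally $B_{r+p^s}$ computed by \eqref{Bmform} minus its value forced by \eqref{ply2}–\eqref{ply3}, which must vanish modulo $p^2$ by the $1$-Lipschitz/uniform-differentiability hypotheses; this would sidestep the harmonic-sum computation entirely, at the cost of carefully matching the combinatorial terms of Lemma \ref{bipro} with the definitions \eqref{defA}, \eqref{defB} of $A$, $B$, $C$.
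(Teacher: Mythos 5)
Your overall strategy --- reduce modulo $p^2$ via $\binom{p}{j}\equiv\frac{p}{j}(-1)^{j-1}\pmod{p^2}$, regroup the three double sums by the index $k$ of $\binom{r}{k}$, and land on a congruence among harmonic-type sums --- is essentially the paper's strategy (the paper organizes the regrouping as $A+B=P+T$ with $P\equiv 0$ and then proves $C+T\equiv 0$). But the mechanism you propose for finishing contains a genuine error. The congruence you invoke, $\sum_{m=1}^{p-1}\frac{(-1)^{m-1}}{m}\equiv 0\pmod p$, is false: this sum equals $H_{p-1}-H_{(p-1)/2}\equiv -H_{(p-1)/2}\equiv 2\cdot\frac{2^{p-1}-1}{p}\pmod p$, a Fermat quotient that is generically nonzero (for $p=5$ it is $1-\tfrac12+\tfrac13-\tfrac14\equiv 1\pmod 5$; for $p=3$ it is $1-\tfrac12\equiv 2\pmod 3$). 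Moreover, no alternating harmonic sums actually arise: the factor $(-1)^{j-1}$ coming from $\binom{p}{j}$ cancels against the sign $(-1)^{r-l+1}$ already present once you write $j=l-k$ (resp.\ $j=k-l$), so for fixed $k$ the coefficient of $\binom{r}{k}$ in $A+B$ is $p(-1)^{r-k}\bigl(kH_{p-1-k}+(p-1-k)\bigr)$ and in $C$ is $p(-1)^{r-k+1}\bigl(kH_{k-1}-(k-1)\bigr)$ modulo $p^2$, with \emph{ordinary} truncated harmonic numbers $H_m=\sum_{j=1}^{m}1/j$. These truncations are not individually $\equiv 0\pmod p$; the cancellation that actually works is the reflection $H_{p-1-k}\equiv H_k\pmod p$ (from $H_{p-1}\equiv 0$ and $\tfrac{1}{p-i}\equiv-\tfrac1i$), after which the bracket telescopes to $k\cdot\tfrac1k+p-2\equiv-1$ for every $k$, and the proof closes only with the final summation $\sum_{k=0}^{r}(-1)^{r-k}\binom{r}{k}=(1-1)^r=0$. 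Your sketch has neither the reflection step nor this closing binomial-theorem cancellation, and since the coefficient of each individual $\binom{r}{k}$ does \emph{not} vanish, ``Wolstenholme plus elementary cancellation'' as you describe it would not finish the argument.

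One further caution on your reserve plan: Proposition~\ref{ABC} is a purely combinatorial statement that is \emph{used inside} the proof of Theorem~\ref{Mcop3over} to show that $\sum_{m=p^{s-1}}^{p^s-1}a_n(m)B_m$ vanishes modulo $p^{s+1}$; deducing $A+B+C\equiv 0$ from what the uniform-differentiability hypotheses force $B_{r+p^s}$ to be would be circular unless you first give an independent proof of that vanishing.
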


For a proof of Proposition \ref{ABC}, we also set the two quantities defined by
\begin{eqnarray}
  P &:=& \sum_{j=1}^{p-1-r} \sum_{l=j}^{j+r} (-1)^{r-l+1} l  \binom{r}{l-j}  \binom{p}{j}, \label{defP} \\
  T &:=& \sum_{j=p-r}^{p-1} \sum_{l=j}^{p-1} (-1)^{r-l+1} l  \binom{r}{l-j} \binom{p}{j}. \label{defT}
\end{eqnarray}
\begin{lemma}\label{Pzero}
Let $p$ be an odd prime. If $1 < r \leq p-1$, then
$P \equiv 0\pmod{p^2}.$
\end{lemma}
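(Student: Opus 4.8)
## Proof proposal for Lemma \ref{Pzero}

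The plan is to reorganize the double sum $P$ by performing the inner summation over $l$ first, for each fixed $j$ in the range $1 \le j \le p-1-r$. For fixed $j$, the inner sum is
$$ \sum_{l=j}^{j+r} (-1)^{r-l+1} l \binom{r}{l-j}. $$
I would substitute $k = l-j$, so $k$ runs from $0$ to $r$ and $l = k+j$, turning this into
$$ (-1)^{r+1}\sum_{k=0}^{r} (-1)^{k}(k+j)\binom{r}{k} = (-1)^{r+1}\left( \sum_{k=0}^{r}(-1)^k k\binom{r}{k} + j\sum_{k=0}^{r}(-1)^k\binom{r}{k}\right). $$
Now both inner sums are classical finite-difference identities: $\sum_{k=0}^{r}(-1)^k\binom{r}{k} = 0$ for $r \ge 1$, and $\sum_{k=0}^{r}(-1)^k k\binom{r}{k} = 0$ for $r \ge 2$ (this is the reason the hypothesis $r > 1$ is needed — it kills the second identity, while $r\ge 1$ already kills the first). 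Hence for every $j$ with $1\le j\le p-1-r$ the entire inner sum vanishes identically, not merely modulo $p^2$, and therefore $P = 0$, which is certainly $\equiv 0 \pmod{p^2}$.

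The key steps in order are: (1) fix $j$ and isolate the inner sum over $l$; (2) reindex by $k=l-j$ to expose the binomial structure and split off the factor $l = k+j$ into a "$k$-weighted" part and a "$j$-weighted constant" part; (3) invoke the two standard alternating-sum identities $\sum_k (-1)^k\binom{r}{k}=0$ and $\sum_k (-1)^k k \binom{r}{k}=0$, noting that the second requires $r\ge 2$, i.e. $r>1$; (4) conclude that each inner sum is zero, hence $P=0$. One should double-check that the range of $l$ in the definition of $P$, namely $l$ from $j$ to $j+r$, exactly matches the full range $k$ from $0$ to $r$ of the binomial $\binom{r}{l-j}$ (terms outside this range contribute nothing anyway since $\binom{r}{l-j}=0$ there), so no boundary terms are lost or gained in the reindexing.

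I do not expect a genuine obstacle here: the only subtlety is the bookkeeping of summation ranges and the observation that the hypothesis $r>1$ is precisely what is needed for the weighted identity $\sum_{k}(-1)^k k\binom{r}{k}=0$ to apply (for $r=1$ that sum equals $-1$). The result is in fact an exact identity $P=0$, stronger than the stated congruence; I would phrase the proof to make this transparent, since the analogous manipulations for $T$, $A$, $B$, $C$ in the surrounding argument will presumably be less clean and will genuinely only hold modulo $p^2$.
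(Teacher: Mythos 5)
Your proof is correct, and it rests on exactly the same mechanism as the paper's: reindex the inner sum by $k=l-j$, split the weight $l=k+j$ into a $k$-part and a $j$-part, and kill both pieces with the alternating identities $\sum_k(-1)^k\binom{r}{k}=0$ (valid for $r\ge 1$) and $\sum_k(-1)^k k\binom{r}{k}=0$ (valid for $r\ge 2$, which is where the hypothesis $r>1$ enters). The one genuine difference is that the paper first reduces $\binom{p}{j}\equiv \frac{p}{j}(-1)^{j-1}\pmod{p^2}$ and only then observes that the resulting inner sum over $i$ vanishes; your version shows that this reduction is superfluous, since the inner sum over $l$ vanishes identically with $\binom{p}{j}$ still attached, giving the stronger exact statement $P=0$ rather than merely $P\equiv 0\pmod{p^2}$. (One small bookkeeping point: in your reindexing the prefactor should be $(-1)^{r+1+j}$ rather than $(-1)^{r+1}$, since $(-1)^{r-l+1}=(-1)^{r-k-j+1}$; this is immaterial here because the inner sum is zero, but it is worth correcting in the write-up.)
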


\begin{proof}
From $P$ in (\ref{defP}), we use the relation $i=l-j$ to rewrite it as
$$ P= \sum_{j=1}^{p-1-r} \sum_{i=0}^{r} (-1)^{r-i-j+1} (i+j)  \binom{r}{i}  \binom{p}{j}.$$
The identity $\binom{p}{j} \equiv \frac{p}{j}(-1)^{j-1} \pmod{p^2}$ yields
$$ P \equiv p\sum_{j=1}^{p-1-r} \sum_{i=0}^{r} (-1)^{r-i} (i+j)  \binom{r}{i}  \frac{1}{j} \pmod{p^2}.$$
We show that the inner sum vanishes as follows: for $r>1$,
\begin{eqnarray*}
  \sum_{i=0}^{r} (-1)^{r-i} (i+j)  \binom{r}{i}  \frac{1}{j} &=& (-1)^r+ \sum_{i=1}^{r} (-1)^{r-i} (i+j)  \binom{r}{i}  \frac{1}{j} \\
  &=& (-1)^r+ \frac{r}{j}\sum_{i=1}^{r} (-1)^{r-i} \binom{r-1}{i-1}   +
  \sum_{i=1}^{r} (-1)^{r-i} \binom{r}{i} \\
  &=& \frac{r}{j}(1-1)^{r-1}+(1-1)^r =0.
 \end{eqnarray*}
Hence, this completes the proof.
\end{proof}

We are in a position to prove Proposition \ref{ABC}.
\begin{proof}
By direct computations, it is relatively easy to see that $A+B+C \equiv 0 \pmod{p^2}$ for cases where
$r=1$ or $r=p-1$. Indeed, for $r=1$, we have
$$A+B+C = -p + \sum_{l=2}^{p-1} (-1)^{l} l \left(  \binom{p}{l-1} + \binom{p}{l} \right).$$
By well-known identities  for binomial coefficients, we obtain:
\begin{eqnarray*}
  A+B+C &=& -p + \sum_{l=2}^{p-1} (-1)^{l} l \binom{p+1}{l} = -p + \sum_{l=2}^{p-1} (-1)^{l} (p+1) \binom{p}{l-1} \\
   &\equiv & -p + \sum_{l=2}^{p-1} (-1)^{l} \binom{p}{l-1} = -(1-1)^p =0  \pmod{p^2}.
\end{eqnarray*}

For $r=p-1,$  its lengthy verification is left for the reader as an exercise.

It now remains to show that the result holds for the general case where
$1< r<p-1.$ By changing the regions of the summing variables $(l,j)$ in the addition of $A$ in (\ref{defA}) and $B$ in (\ref{defB}),  it follows from Lemma \ref{Pzero} that
$$ A+ B = P+T \equiv T \pmod{p^2}.$$
Because $A+ B +C \equiv  C+ T \pmod{p^2},$ the proof amounts to showing that  $ C+ T \equiv 0\pmod{p^2}$ by explicitly calculating $C$ and $T$ separately.
Let us do this for $C$ in the first place.
By summing over the line $ l+j=k$ in $C$, it can be seen
that
$$ C:=\sum_{l=1}^{r-1} \sum_{j=1}^{r-l} (-1)^{r-l} l  \binom{r}{l+j} \binom{p}{j} =
\sum_{k=2}^{r} \sum_{j=1}^{k-1} (-1)^{r+k-j} (k-j) \binom{r}{k} \binom{p}{j}.$$
The identity $\binom{p}{j} \equiv \frac{p}{j}(-1)^{j-1} \pmod{p^2}$ allows us to split the above sum into two parts:

\begin{eqnarray*}
  C  &\equiv & p\sum_{k=2}^{r} \sum_{j=1}^{k-1} (-1)^{r+k}  \binom{r}{k}
+  p\sum_{k=2}^{r} \sum_{j=1}^{k-1} (-1)^{r+k-1} k \binom{r}{k} \frac{1}{j} \pmod{p^2}\\
   &=& p\sum_{k=2}^{r} (-1)^{r+k}(k-1)   \binom{r}{k} +  p\sum_{k=2}^{r}(-1)^{r+k-1} k \binom{r}{k}H_{k-1},
\end{eqnarray*}
where $H_{m}$ is the $m$th harmonic number  defined by
\begin{equation}\label{Hm}
 H_m=\sum_{j=1}^{m} \frac{1}{j}.
\end{equation}

As the identity $\sum_{k=2}^{r} (-1)^{r+k}(k-1)\binom{r}{k}  =(-1)^r$ holds in the first sum, we obtain
\begin{equation}\label{Cexp}
C  \equiv (-1)^r p + p\sum_{k=2}^{r}(-1)^{r+k-1} k \binom{r}{k}H_{k-1} \pmod{p^2}.
\end{equation}
Next, we calculate $T$ from (\ref{defT}) by setting $k=l-j$ to obtain
  $$T =\sum_{j=p-r}^{p-1} \sum_{k=0}^{p-1-j} (-1)^{r-j-k+1} (j+k)  \binom{r}{k} \binom{p}{j}.$$

From both $\binom{p}{j} \equiv \frac{p}{j}(-1)^{j-1} \pmod{p^2}$
and $\sum_{k=0}^{r-1} (-1)^{r-k} (r-k)\binom{r}{k}=0,$
$T$ can be simplified as a sum of the following expressions:
 \begin{eqnarray*}
   T &\equiv & p\sum_{j=p-r}^{p-1} \sum_{k=0}^{p-1-j} (-1)^{r-k} \binom{r}{k} +p\sum_{j=p-r}^{p-1} \sum_{k=0}^{p-1-j} (-1)^{r-k} \frac{k}{j}\binom{r}{k} \pmod{p^2} \\
    &=& p\sum_{k=0}^{r-1} (-1)^{r-k} (r-k)\binom{r}{k} + p\sum_{k=0}^{r-1}\sum_{j=p-r}^{p-1-k}  (-1)^{r-k} k\binom{r}{k}\frac{1}{j} \\
     &=&p\sum_{k=1}^{r-1} (-1)^{r-k} k\binom{r}{k}(H_{p-1-k} -H_{p-1-r}),
 \end{eqnarray*}

Noting that $ H_{p-1-k} -H_{p-1-r} \equiv -(H_{r}-H_k)\pmod{p}$ in (\ref{Hm}),
we use the identity $\sum_{k=1}^{r-1} (-1)^{r-k+1} k\binom{r}{k} = r$ to obtain $T$ as follows:
\begin{eqnarray}
 T &\equiv &  p\sum_{k=1}^{r-1} (-1)^{r-k+1} k\binom{r}{k}(H_r -H_k) \pmod{p^2} \nonumber  \\
   &=&  p\sum_{k=1}^{r-1} (-1)^{r-k+1} k\binom{r}{k}H_r +  p\sum_{k=1}^{r-1} (-1)^{r-k} k\binom{r}{k}H_k  \nonumber  \\
  &=&  prH_r+ p\sum_{k=1}^{r-1} (-1)^{r-k} k\binom{r}{k}H_k = p\sum_{k=1}^{r} (-1)^{r-k} k\binom{r}{k}H_k  \nonumber \\
  &=&  pr(-1)^{r-1}+ p\sum_{k=2}^{r} (-1)^{r-k} k\binom{r}{k}H_k.\label{Texp}
\end{eqnarray}
From (\ref{Cexp}) and (\ref{Texp}), using the identity $\sum_{k=2}^{r} (-1)^{r-k} \binom{r}{k} =(-1)^r(r-1),$
 we obtain
\begin{eqnarray*}
   C+T &\equiv& (-1)^r p+ pr(-1)^{r-1} +p\sum_{k=2}^{r} (-1)^{r-k} k\binom{r}{k}(H_k-H_{k-1}) \pmod{p^2} \\
   &=& (-1)^r p+ pr(-1)^{r-1} + p\sum_{k=2}^{r} (-1)^{r-k} \binom{r}{k} \\
   &=& 0.
\end{eqnarray*}
Thus, the proof is complete.
\end{proof}

We are now ready to prove the main result.
\begin{theorem}\label{Mcop3over}
For $p$ an odd prime, let $f$ be a 1-Lipschitz function on $\Zp.$
Then, $f$ is uniformly differentiable modulo $p$ with $N_1(f)=1$ if and only if
its Mahler coefficients $\{ a_n \}_{n\geq0}$  satisfy the following conditions: writing $n=n_{-}+n_sp^s,$

(1) $a_{n_{-}+n_sp^s} \equiv 0 \pmod{p^{s+1}}$ for all $s\geq1$ and $2 \leq n_s \leq p-1;$

(2) $a_{n_{-}+p^s} \equiv 0 \pmod{p^{s+1}}$ for all $s\geq2.$

\end{theorem}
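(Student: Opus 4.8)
The plan is to translate the analytic condition ``uniformly differentiable modulo $p$ with $N_1(f)=1$'' into the van der Put coefficients via the congruences (\ref{ply2}), (\ref{ply3}), and then transport that information to the Mahler coefficients through the change-of-basis formula (\ref{anform})–(\ref{anmfor}). Concretely, for the forward direction I would start from $a_n=\sum_{m=0}^{n}a_n(m)B_m$ with $n=n_-+n_sp^s$, and split the sum according to the size of $m$. Writing $M=\lfloor\log_p m\rfloor+1$, the estimate (\ref{vpformula}) already forces $v_p(a_n(m))$ to be large when $m$ is small relative to $n$; so the only indices $m$ that can contribute below the target valuation $p^{s+1}$ are those with $p^{s}\leq m\leq n$, i.e. $m=m_-+m_sp^s$ with $1\le m_s\le n_s$. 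For such $m$ we have $M=s+1$ and $a_n(m)=\sum_{\alpha}(-1)^{n-m-p^{s+1}\alpha}\binom{n}{m+p^{s+1}\alpha}$, which by Lemma \ref{valpro} (and Lemma \ref{bipro}) collapses modulo $p^2$ to a short sum of products $\binom{\text{lower digits}}{\cdot}\binom{p}{j}$. Meanwhile the hypothesis gives $B_{m_-+m_sp^s}\equiv m_s B_{m_-+p^s}\pmod{p^{s+1}}$ and $B_{m_-+p^s}\equiv p^s\partial_1 f(m_-)\pmod{p^{s+1}}$, so each relevant $B_m$ is $p^s$ times something depending only on $m_-$ times the digit $m_s$. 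Substituting, $a_n/p^s$ becomes, modulo $p$ (resp. modulo $p^2$ after the extra factor), a linear combination indexed by $(l,j)$ of exactly the shape appearing in $A$, $B$, $C$ of (\ref{defA})–(\ref{defB}); Proposition \ref{ABC} then says that combination vanishes mod $p^2$, yielding $a_n\equiv 0\pmod{p^{s+1}}$ in case (1) (where $n_s\ge 2$) and in case (2) (where $n_s=1$ but $s\ge 2$, so the $\binom{p}{j}$ factors still supply the needed extra power of $p$). The quantities $\delta^i(r)$ promised in (\ref{sdel1})–(\ref{sdel3}) are presumably the explicit names for these intermediate sums, so the bookkeeping is: express $a_n$ as $p^s$ times a sum of $\delta^i$'s, then invoke Proposition \ref{ABC}.

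For the converse, I would argue that conditions (1) and (2) on the $a_n$ force the van der Put coefficients back into the pattern (\ref{ply2})–(\ref{ply3}) that characterizes uniform differentiability mod $p$ with $N_1=1$. Here I use the other change-of-basis direction, $B_m=\sum_{n=0}^{m}A_m(n)a_n$ with $A_m(n)=\binom{m}{n}-\binom{m_-}{n}$ from (\ref{Amnfor}); given $m=r+\ell p^s$ one compares $B_{r+\ell p^s}$ with $\ell B_{r+p^s}$ by matching the two binomial expansions term by term, using Lemma \ref{bipro} to reduce $\binom{r+\ell p^s}{n}-\binom{r}{n}$ modulo $p^{s+1}$ to a sum whose only surviving contributions come from Mahler indices $n$ of the excluded/controlled types, which vanish or are $\ell$-proportional by (1) and (2). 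Since the $1$-Lipschitz hypothesis is assumed throughout and (by Proposition \ref{lipness}) is equivalent to $|a_n|\le p^{-\lfloor\log_p n\rfloor}$, one already knows $B_{r+\ell p^s}$ is divisible by $p^s$; conditions (1)–(2) upgrade this to the congruence $B_{r+\ell p^s}\equiv \ell B_{r+p^s}\pmod{p^{s+1}}$, and one reads off a well-defined $\partial_1 f(r):=b_{r+p}\bmod p$ satisfying (\ref{diffmodp1}), which is exactly uniform differentiability mod $p$ with $N_1(f)=1$.

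The main obstacle, and the place where the ``case $p=2$'' subtlety of the Introduction bites, is the valuation accounting in case (2): when $n_s=1$ the digit itself contributes no extra $p$, so the required power $p^{s+1}$ must come entirely from the $\binom{p}{j}$ factors in the reductions of Lemma \ref{bipro}, together with the mod-$p^2$ cancellation of Proposition \ref{ABC}; one has to check that no term with $j=0$ (which would kill that gain) actually appears — this is why the sums $B$ and $C$ have their index ranges starting at $j=l-r$ and $j=1$ respectively, and why $s\ge 2$ is needed (for $s=1$ the exponent $p^M-1=p^2-1$ in (\ref{vpformula}) and the structure of $a_n(p)$ behave differently and the statement is genuinely false, matching Anashin's known $N_1=1$ examples). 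A secondary nuisance is making the ``only $m$ with $p^s\le m\le n$ matter'' step fully rigorous: one must verify that for $m$ with $\lfloor\log_p m\rfloor+1 = M < s+1$ the bound $v_p(a_n(m))\ge\lfloor (n-1)/(p^M-1)\rfloor$ already exceeds $s+1$ whenever $n\le p^{s+1}$ is in the range we care about, and handle the boundary indices $m$ with $M=s+1$ but $m_s>n_s$ (where $a_n(m)=0$ since $n<m$). Once those two checks are in place, the whole theorem is ``reduce via the change-of-basis formulas to a finite identity of binomial sums, then quote Proposition \ref{ABC}.''
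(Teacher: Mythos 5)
Your overall strategy is the paper's: use $a_n=\sum_m a_n(m)B_m$ with the valuation bound (\ref{vpformula}), feed in (\ref{ply2})--(\ref{ply3}), reduce the binomial coefficients via Lemma \ref{valpro}/\ref{bipro}, and quote Proposition \ref{ABC}; the converse via $B_m=\sum_n A_m(n)a_n$ also matches. But there is a genuine gap in the forward direction, and it sits exactly in the hard case. Your claim that ``the only indices $m$ that can contribute below the target valuation $p^{s+1}$ are those with $p^{s}\leq m\leq n$'' is true only when $n_s\geq 2$: the bound one actually gets for $m=m_-+m_lp^l$ with $l\leq s-1$ is $v_p(a_n(m)B_m)\geq (n_s-1)+2s-l-1$, which for $l=s-1$ equals $n_s+s-1$. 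When $n_s=1$ (your case (2)) this is only $s$, so the terms with $p^{s-1}\leq m<p^s$ contribute at exactly the critical valuation and cannot be discarded. In the paper these terms are the whole story: when $n_-<p^{s-1}$ the sum over $[p^{s-1},p^s)$ produces $-p^s\sum(-1)^{n_--m_-}\binom{n_-}{m_-}\partial_1f(m_-)$, which cancels the $+p^s\sum(\cdots)$ coming from $m\geq p^s$; dropping it, as your outline does, leaves a residue that need not vanish mod $p^{s+1}$. When $n_-=n_*+rp^{s-1}$ with $r\geq 1$, the $m\geq p^s$ part vanishes on its own (via $(1-1)^r$), and it is the $[p^{s-1},p^s)$ part that splits into the three sums $C_1,C_2,C_3$ whose coefficients are $\delta^1(r),\delta^2(r),\delta^3(r)$ — i.e.\ $A,B,C$ — so Proposition \ref{ABC} is invoked only there. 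Your proposal attaches $A,B,C$ to the $m\geq p^s$ range, which is where they do not arise.

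A smaller misattribution in the same spirit: case (1) ($n_s\geq 2$) does not need Proposition \ref{ABC} at all; after Lucas the inner sum is $\sum_{l=1}^{n_s}(-1)^{n_s-l}l\binom{n_s}{l}=n_s(1-1)^{n_s-1}=0$, and that elementary identity is the entire cancellation. So the repair is: keep your valuation analysis for case (1) as is, but in case (2) retain both ranges $[p^{s-1},p^s)$ and $[p^s,n]$, split according to whether the digit of $n_-$ at $p^{s-1}$ is zero, and only then does Lemma \ref{bipro} produce the $\binom{p}{j}$ factors and the sums to which Proposition \ref{ABC} applies. Your converse sketch is essentially the paper's argument (reduce to $B_{t+\ell p^s}\equiv \ell p^{s-1}B_{t_0+p}\pmod{p^{s+1}}$ and compute $B_{r+\ell p^s}$ from the $a_n$ via (\ref{Amnfor}) and Lemma \ref{valpro}), and is fine modulo the details.
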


\begin{proof}
By dividing the proof  into two cases for $n,$ we  use the formulas in (\ref{anmfor}) and (\ref{vpformula})  to estimate $ a_n=\sum_{m=0}^{n} a_n(m)B_m$ in (\ref{anform}). 

For the first case where $n= n_{-}+n_sp^s$ with $n_s \geq 2$ and $s\geq1,$
we claim that $ a_{n}\equiv 0 \pmod{p^{s+1}}.$
Let $m= m_{-}+m_l p^{l} $ with $ 0\leq l \leq s.$
From the formula for $a_n(m)$ in (\ref{vpformula}), its $p$-adic valuation is given by
 $$v_{p}(a_n(m)) \geq \lfloor \frac{n_sp^s+n_{-}-1}{p^{l+1}-1}\rfloor.$$
 A direct computation yields
 $$v_{p}(a_n(m)) \geq (n_s-1) + p^{s-l-1}.$$
A simple inequality, along with $p\geq 3$, gives
$$v_{p}(a_n(m)) \geq (n_s-1) +2s -2l -1.$$
Because $B_m$ are divisible by $p^l$ from Proposition \ref{lipness}, it is easy to see that unless $s=l,$ then
 $$v_{p}(a_n(m)B_m) \geq (n_s-1) +2s -l -1\geq s+1.$$
Thus, we have
$$ a_n \equiv \sum_{m=p^s}^{n} a_n(m)B_m \pmod{p^{s+1}}.$$

Via (\ref{anmfor}), the property that $B_{m_{-}+lp^s} \equiv l B_{m_{-}+p^s} \pmod{p^{s+1}}$ in (\ref{ply2}) yields
$$ a_n\equiv \sum_{l=1}^{n_s}\sum_{m_{-}=0}^{p^s-1} (-1)^{n-m} l\binom{n_{-}+n_sp^s}{m_{-}+lp^s}B_{m_{-}+p^s}\pmod{p^{s+1}}.$$
By Lucas's theorem, a simple identity for binomial coefficients yields
\begin{eqnarray*}
   a_n &\equiv& \sum_{l=1}^{n_s}\sum_{m_{-}=0}^{p^s-1} (-1)^{n-m} l
\binom{n_{-}}{m_{-}} \binom{n_s}{l} B_{m_{-}+p^s}\pmod{p^{s+1}} \\
&\equiv& \sum_{m_{-}=0}^{p^s-1}(-1)^{n_{-}-m_{-}}\binom{n_{-}}{m_{-}} B_{m_{-}+p^s} \cdot
\sum_{l=1}^{n_s} (-1)^{n_s-l} l \binom{n_s}{l} \\
&\equiv& \sum_{m_{-}=0}^{p^s-1}(-1)^{n_{-}-m_{-}}\binom{n_{-}}{m_{-}} B_{m_{-}+p^s} \cdot n_s(1-1)^{n_s-1}\pmod{p^{s+1}} \\
&=&0.
\end{eqnarray*}
Hence, the claim follows by the assumption that $n_s\geq 2.$

Now we turn to the second case where $n= n_{-}+p^s$ with $s \geq 2$.
We will show that $ a_{n}\equiv 0 \pmod{p^{s+1}}$ for this case.
We first consider a subcase where $n_{-}<p^{s-1}.$
As in the above, we calculate the $p$-adic valuation of $a_n(m)B_m$ for $0\leq m\leq n$
For $m=m_{-}+m_lp^l$ with $1\leq m_l <p$ and $0\leq l\leq s,$ we see that
if $s-l\geq 2$, then
$$v_{p}(a_n(m)B_m) \geq p^{s-l-1} +l \geq 2s-l-1=s+(s-l)-1 \geq s+1.$$
Hence, we obtain
\begin{equation}\label{secsum}
   a_n \equiv \sum_{m=p^{s-1}}^{p^s-1} a_n(m)B_m +
\sum_{m=p^{s}}^{n} a_n(m)B_m \pmod{p^{s+1}}.
\end{equation}

From (\ref{ply3}), it is easy to deduce that the second sum in Equation (\ref{secsum}) can be expressed as follows:
\begin{eqnarray}\label{secsum2}
  \sum_{m=p^{s}}^{n} a_n(m)B_m
&\equiv& \sum_{m_{-}=0}^{n_{-}} (-1)^{n_{-}-m_{-}} \binom{n_{-}}{m_{-}}B_{m_{-}+p^s} \pmod{p^{s+1}} \nonumber \\
&\equiv& p^s\sum_{m_{-}=0}^{n_{-}} (-1)^{n_{-}-m_{-}} \binom{n_{-}}{m_{-}}\partial_1f(m_{-}) \pmod{p^{s+1}}.
\end{eqnarray}

Writing $m=m_{-}+lp^{s-1},$ we estimate the first sum in Equation (\ref{secsum}):
$$\sum_{m=p^{s-1}}^{p^s-1} a_n(m)B_m
=
\sum_{l=1}^{p-1}\sum_{m_{-}=0}^{p^{s-1}-1} (-1)^{n_{-}-m_{-}+p-l}
\binom{n_{-}+p^s}{m_{-}+lp^{s-1}}
B_{m_{-}+lp^{s-1}}.  $$

Because the congruence $\binom{n_{-}+p^s}{m_{-}+lp^{s-1}}\equiv \binom{n_{-}}{m_{-}}\binom{p}{l}\pmod{p^2}$ holds from Lemma \ref{bipro}(1) for $n_{-}<p^{s-1}$, computations similar to the first case  yield
\begin{eqnarray}
\sum_{m=p^{s-1}}^{p^s-1} a_n(m)B_m
&\equiv&
p^s\sum_{m_{-}=0}^{p^{s-1}-1}
(-1)^{n_{-}-m_{-}}\binom{n_{-}}{m_{-}} \partial_1f(m_{-}) \cdot
\sum_{l=1}^{p-1} (-1)^{p-l} \binom{p-1}{l-1} \pmod{p^{s+1}} \nonumber \\
&=&
p^s\sum_{m_{-}=0}^{p^{s-1}-1}
(-1)^{n_{-}-m_{-}}\binom{n_{-}}{m_{-}} \partial_1f(m_{-}) \cdot
\left( (1-1)^{p-1} -1 \right) \nonumber \\
&\equiv&
-p^s\sum_{m_{-}=0}^{p^{s-1}-1}
(-1)^{n_{-}-m_{-}}\binom{n_{-}}{m_{-}} \partial_1f(m_{-}) \pmod{p^{s+1}}. \label{secsum3}
\end{eqnarray}
For the current case, the desired  result of $a_n$  follows then by
adding up the two sums in (\ref{secsum2})  and (\ref{secsum3}).

Second, we consider another subcase where $n_{-}=n_* +rp^{s-1}$ with $n_*< p^{s-1}$ and $1 \leq r \leq p-1.$
Writing $m=m_{-}+p^s=m_{*}+lp^{s-1}+p^s$, let us show first that the second sum in Equation (\ref{secsum}) vanishes modulo $p^{s+1}$.
We use (\ref{anmfor}) to calculate
\begin{eqnarray*}
  \sum_{m=p^{s}}^{n} a_n(m)B_m &=& \sum_{m_{-} =0}^{n_{-}} (-1)^{n_{-}-m_{-}}\binom{n_{-}+p^s}{m_{-}+p^s}B_{m_{-}+p^s}\\
   &=& \sum_{l=0}^{r}\sum_{m_{*} =0}^{p^{s-1}-1} (-1)^{n_{*}-m_{*}+(r-l)} \binom{n_{*}+rp^{s-1}+p^s}{m_{*}+lp^{s-1}+p^s}B_{m_{*}+lp^{s-1}+p^s}
\end{eqnarray*}
From Lemma \ref{bipro} (3),  Lucas's congruence, Equation (\ref{ply3}) and \cite[Proposition 3.32]{AK}  allow us to
rewrite it as
\begin{eqnarray*}
  \sum_{m=p^{s}}^{n} a_n(m)B_m & \equiv & \sum_{l=0}^{r}\sum_{m_{*} =0}^{p^{s-1}-1} (-1)^{n_{*}-m_{*}+(r-l)} \binom{n_{*}+rp^{s-1}}{m_{*}+lp^{s-1}}B_{m_{*}+lp^{s-1}+p^s}  \\
   &\equiv& p^s \sum_{m_{*} =0}^{p^{s-1}-1}(-1)^{n_{*}-m_{*}} \binom{n_{*}}{m_{*}}\partial_1f(m_{*})\cdot \sum_{l=0}^{r} (-1)^{r-l} \binom{r}{l} \pmod{p^{s+1}}\\
   &\equiv& p^s \sum_{m_{*} =0}^{p^{s-1}-1}(-1)^{n_{*}-m_{*}} \binom{n_{*}}{m_{*}}\partial_1f(m_{*})\cdot (1-1)^{r} =0 \pmod{p^{s+1}}.
\end{eqnarray*}
Hence, the result for the second sum in Equation (\ref{secsum}) follows.

It remains to show that the first sum in Equation (\ref{secsum}) vanishes modulo $p^{s+1}$
for $n:=n_{*}+rp^{s-1}+p^s$ and $m:=m_{-}+lp^{s-1}.$

Because of the formula for $a_n(m)$ in (\ref{anmfor}),
which depends on $n=n_{*}+rp^{s-1}+p^s$ and $m=m_{-}+lp^{s-1},$
the first sum in Equation (\ref{secsum})
can be explicitly expressed as follows:
\begin{eqnarray*}
\sum_{m=p^{s-1}}^{p^s-1} a_n(m)B_m
&\equiv&   \sum_{l=1}^{p-1} l \sum_{m_{-}=0}^{p^{s-1}-1} a_n(m)B_{m_{-}+p^{s-1}} \pmod{p^{s+1}} \nonumber \\
&=&
\sum_{l=1}^{r-1} l \sum_{ m_{-}=0}^{p^{s-1}-1} \left( (-1)^{n-m} \binom{n}{m}
+ (-1)^{n-m-p^s} \binom{n}{m+p^s}  \right)B_{m_{-}+p^{s-1}} \nonumber \\
&+& r \sum_{ m_{-}=0}^{n_*}\left( (-1)^{n-m} \binom{n}{m}
+ (-1)^{n-m-p^s} \binom{n}{m+p^s}  \right)B_{m_{-}+p^{s-1}} \nonumber \\
&+& r \sum_{ m_{-}=n_* +1}^{p^{s-1}-1}(-1)^{n-m} \binom{n}{m} B_{m_{-}+p^{s-1}}
+ \sum_{l=r+1}^{p-1} l \sum_{ m_{-}=0}^{p^{s-1}-1}(-1)^{n-m} \binom{n}{m} B_{m_{-}+p^{s-1}}.
\end{eqnarray*}
Rewriting it gives
\begin{eqnarray}
\sum_{m=p^{s-1}}^{p^s-1} a_n(m)B_m
&\equiv&
\sum_{l=1}^{r-1} l \sum_{ m_{-}=0}^{p^{s-1}-1} \left( (-1)^{n-m} \binom{n}{m}
+ (-1)^{n-m-p^s} \binom{n}{m+p^s}  \right)B_{m_{-}+p^{s-1}} \nonumber \\
&+& r \sum_{ m_{-}=0}^{p^{s-1}-1}\left((-1)^{n-m} \binom{n}{m_{-}+rp^{s-1}}+(-1)^{n-m-p^s} \binom{n}{m_{-}+rp^{s-1}+p^s}\right) B_{m_{-}+p^{s-1}} \nonumber\\
&+& \sum_{l=r+1}^{p-1} l \sum_{ m_{-}=0}^{p^{s-1}-1}(-1)^{n-m} \binom{n}{m_{-}+lp^{s-1}} B_{m_{-}+p^{s-1}}  \pmod{p^{s+1}} \nonumber \\
&:=& C_1 +C_2 + C_3.\label{3sums},
\end{eqnarray}
where $C_i ~(1 \leq i \leq3)$ denotes the $i$th sum of the preceding expression. 

We estimate the three sums $\{C_i\}_{1 \leq i \leq 3}$ in (\ref{3sums}) in order.
Using Lemma \ref{bipro} we first calculate $C_1$ modulo $p^{s+1}:$
\begin{eqnarray}
  C_1 &=& \sum_{l=1}^{r-1} l \sum_{ m_{-}=0}^{p^{s-1}-1} \left( (-1)^{n-m} \binom{n_{*}+rp^{s-1}+p^s}{m_{-}+lp^{s-1}}+ (-1)^{n-m-p^s} \binom{n_{*}+rp^{s-1}+p^s}{m_{-}+lp^{s-1}+p^s}  \right)B_{m_{-}+p^{s-1}} \nonumber \\
  &=&  \sum_{ m_{-}=0}^{p^{s-1}-1} B_{m_{-}+p^{s-1}}\sum_{l=1}^{r-1} l \left( (-1)^{n-m} \binom{n_{*}+rp^{s-1}+p^s}{m_{-}+lp^{s-1}}+ (-1)^{n-m-p^s} \binom{n_{*}+rp^{s-1}+p^s}{m_{-}+lp^{s-1}+p^s}  \right) \nonumber \\
  & \equiv & \sum_{ m_{-}=0}^{p^{s-1}-1} B_{m_{-}+p^{s-1}} \D_{m_{-}}^{1}(r) \pmod{p^{s+1}}, \label{Cone}
\end{eqnarray}
 where $$\D_{m_{-}}^{1}(r):=\sum_{l=1}^{r}(-1)^{n_{*}-m_{-} +r-l+1} l\left(\sum_{j=1}^{l} \binom{n_{*}+rp^{s-1}}{m_{-}+(l-j)p^{s-1}}\binom{p}{j}
-\sum_{j=1}^{r-l}\binom{n_{*}+rp^{s-1}}{m_{-}+(l+j)p^{s-1}} \binom{p}{j}\right).$$

  By Lucas's theorem, we
  have
 \begin{eqnarray}
   \D_{m_{-}}^{1}(r) & \equiv & \sum_{l=1}^{r-1}(-1)^{n_{*}-m_{-} +r-l+1} l\left(\sum_{j=1}^{l} \binom{n_{*}}{m_{-}}\binom{r}{l-j}\binom{p}{j}
-\sum_{j=1}^{r-l}\binom{n_{*}}{m_{-}}\binom{r}{l+j}\binom{p}{j} \right)\nonumber \\
    &=& (-1)^{n_{*}-m_{-}}\binom{n_{*}}{m_{-}}\delta^{1}(r)\pmod{p^2}, \label{del1}
\end{eqnarray}
where $\delta^{1}(r)$ is given by

\begin{equation}\label{sdel1}
 \delta^{1}(r):=\sum_{l=1}^{r-1}(-1)^{r-l+1} l\left(\sum_{j=1}^{l} \binom{r}{l-j}\binom{p}{j}
-\sum_{j=1}^{r-l}\binom{r}{l+j}\binom{p}{j} \right),
\end{equation}
and  it is crucial to observe that it does not depend on $m_{-}.$
Hence, substituting (\ref{sdel1}) into $C_1$ in (\ref{Cone}) via (\ref{del1}) yields

 \begin{eqnarray}\label{C1final}
C_{1} =\sum_{ m_{-}=0}^{p^{s-1}-1}(-1)^{n_{*}-m_{-}}\binom{n_{*}}{m_{-}} B_{m_{-}+p^{s-1}} \delta^{1}(r) \pmod{p^{s+1}}.
\end{eqnarray}

Similarly, using Lemma \ref{bipro} and Lucas's congruence,  we obtain

\begin{eqnarray}\label{C2final}
 C_{2} &=&
 r \sum_{ m_{-}=0}^{p^{s-1}-1}\left( (-1)^{n-m} \binom{n}{m_{-}+rp^{s-1}}  +(-1)^{n-m-p^s} \binom{n}{m_{-}+rp^{s-1}+p^s}\right) B_{m_{-}+p^{s-1}} \nonumber\\
 &=& \sum_{ m_{-}=0}^{p^{s-1}-1}(-1)^{n_{*}-m_{-}}\binom{n_{*}}{m_{-}} B_{m_{-}+p^{s-1}} \delta^{2}(r) \pmod{p^{s+1}},
\end{eqnarray}
where
\begin{equation}\label{sdel2}
\delta^{2}(r):=
-r\sum_{j=1}^{r} \binom{r}{r-j}\binom{p}{j}.
\end{equation}

By similar computations,
we have
\begin{eqnarray}\label{C3final}
C_{3} &=& \sum_{l=r+1}^{p-1} l \sum_{ m_{-}=0}^{p^{s-1}-1}(-1)^{n-m} \binom{n}{m_{-}+lp^{s-1}} B_{m_{-}+p^{s-1}} \nonumber \\
 &=& \sum_{ m_{-}=0}^{p^{s-1}-1}(-1)^{n_{*}-m_{-}}\binom{n_{*}}{m_{-}} B_{m_{-}+p^{s-1}} \delta^{3}(r) \pmod{p^{s+1}},
\end{eqnarray}
where
\begin{equation}\label{sdel3}
\delta^{3}(r):=\sum_{l=r+1}^{p-1}(-1)^{r-l+1} l \sum_{j=l-r}^{l} \binom{r}{l-j}\binom{p}{j}.
\end{equation}
On adding up the three sums, $\{C_i\}_{1 \leq i \leq 3}$ in (\ref{C1final}), (\ref{C2final}), and (\ref{C3final}), the first sum in Equation (\ref{secsum}) is equal to:
$$ \sum_{m=p^{s-1}}^{p^s-1} a_n(m)B_m =\sum_{i=1}^{3}C_i= \sum_{ m_{-}=0}^{p^{s-1}-1}(-1)^{n_{*}-m_{-}}\binom{n_{*}}{m_{-}} B_{m_{-}+p^{s-1}} \cdot \sum_{i=1}^{3} \delta^{i}(r) \pmod{p^{s+1}}.$$
As $ \sum_{i=1}^{3} \delta^{i}(r)$ is simply $A+B+C,$ which vanishes modulo $p^2$ from Proposition \ref{ABC}, the result follows immediately. This completes a proof of necessity.

For the converse, it suffices by \cite[Lemma 2.3]{Me20b} to show that $B_{t+lp^s} \equiv lp^{s-1}B_{t_0+p}\pmod{p^{s+1}}$ for all $s\geq1 $, $t<p^s$ and $1\leq l \leq p-1.$
For $s=1$, we use the formulas in (\ref{Bmform}) and (\ref{Amnfor}) to calculate
\begin{eqnarray}
  B_{r+p} &=& \sum_{n=0}^{r+p}A_{r+p}(n)a_n = \sum_{n=0}^{r+p}(\binom{r+p}{n}-\binom{r}{n})  a_n  \nonumber \\
   &=& \sum_{n=1}^{r+p}(\sum_{j=1}^{n} \binom{r}{n-j}\binom{p}{j})  a_n =\sum_{j=1}^{r+p}\binom{p}{j}\sum_{n=j}^{r+p} \binom{r}{n-j}a_n \nonumber \\
   & \equiv & \sum_{j=1}^{p}\binom{p}{j}\sum_{n=j}^{r+p} \binom{r}{n-j}a_n \pmod{p^2}, \label{Brp}
\end{eqnarray}
because $a_n$ is divisible by $p$ for $n\geq p.$
We do the same for $m=r+lp^s$ with $s\geq2$ and $1 \leq l\leq p-1$. We begin by calculating
\begin{eqnarray*}
  B_{r+lp^s} &=& \sum_{n=0}^{r+lp^s}A_{r+lp^s}(n)a_n =\sum_{n=0}^{r+lp^s}\left(\binom{r+lp^s}{n}-\binom{r}{n}\right)  a_n \\
   &=& \sum_{n=1}^{r+lp^s}(\sum_{j=1}^{n} \binom{r}{n-j}\binom{lp^s}{j})  a_n
\end{eqnarray*}
From Lemma \ref{valpro} and the assumptions on $a_n,$ it is easily seen that if $n\geq 2p$, then
$$ v_p(\binom{lp^s}{j}a_n)  \geq s-v_p(j)+ \lfloor {\rm log}_{p} n \rfloor +1 \geq s+1.$$
Hence, we obtain
$$ B_{r+lp^s} \equiv \sum_{n=1}^{2p-1}(\sum_{j=1}^{n} \binom{r}{n-j}\binom{lp^s}{j})  a_n \pmod{p^{s+1}}.$$
$$ \equiv \sum_{j=1}^{2p-1}\sum_{n=j}^{2p-1} \binom{r}{n-j} \binom{lp^s}{j}  a_n \pmod{p^{s+1}}.$$
If $j>p$, then $\binom{lp^s}{j}\equiv l\binom{p^s}{j} \pmod{p^{s+1}},$ and $a_n$ is divisible by $p$,
we have
\begin{equation}\label{Brlps}
B_{r+lp^s}  \equiv l \sum_{j=1}^{p} \sum_{n=j}^{2p-1} \binom{r}{n-j} \binom{p^s}{j} a_n \pmod{p^{s+1}}.
\end{equation}
Here, as $\binom{p^s}{j}a_n  \equiv 0 \pmod{p^{s}}$ we observe from the Lucas's theorem that $r<p^s$ in (\ref{Brlps}) can be replaced by $0\leq r_0<p$ such that $r \equiv r_0 \pmod{p}.$
This observation yields
\begin{equation*}
B_{r+lp^s}  \equiv l \sum_{j=1}^{p}\sum_{n=j}^{2p-1} \binom{r_0}{n-j}  \binom{p^s}{j} a_n \pmod{p^{s+1}}.
\end{equation*}
Because of the congruence $\binom{p^s}{j} \equiv p^{s-1}\binom{p}{j} \pmod{p^s}$ for $1 \leq j \leq p,$
we have
\begin{equation*}
B_{r+lp^s}  \equiv l p^{s-1}\sum_{j=1}^{p} \binom{p}{j} \sum_{n=j}^{2p-1} \binom{r_0}{n-j}   a_n \pmod{p^{s+1}}.
\end{equation*}
The claim follows then by substituting (\ref{Brp}) into the right-hand side of the preceding congruence.
\end{proof}

\subsection{Coefficients estimation for $p = 2$}

As is the case with odd primes, we estimate Mahler's coefficients of uniformly differentiable modulo $2$ functions on $\Z_2.$
We first have the following analogue of Lemma \ref{bipro}.
\begin{lemma}\label{bip2}
Let $s$ be an integer $\geq 2$ and $\a, \b$ be non-negative integers $ < 2^{s-1}.$
Then
	\begin{enumerate}
			\item[(1)]$ \binom{\a +2^s}{\b +2^{s-1}}  \equiv 2 \binom{\a}{\b } \pmod{2^2};$
			
		\item[(2)]$ \binom{\a +2^s}{\b +2^{s}}  \equiv \binom{\a}{\b } \pmod{2^2};$

			\item[(3)] $\binom{\a + 2^{s-1}+2^s}{\b +2^{s-1}}  \equiv
\binom{\a + 2^{s-1}}{\b +2^{s-1}} +2\binom{\a + 2^{s-1}}{\b } \pmod{2^2};$

\item[(4)] $\binom{\a + 2^{s-1}+2^s}{\b + 2^s}  \equiv \binom{\a + 2^{s-1}}{\b} + 2\binom{\a + 2^{s-1}}{\b +2^{s-1}}\pmod{2^2};$

			\item[(5)] $\binom{\a + 2^{s-1}+2^s}{\b +2^{s-1}+2^s}  \equiv
\binom{\a + 2^{s-1}}{\b +2^{s-1}} \pmod{2^2}.$
	
		\end{enumerate}
	
\end{lemma}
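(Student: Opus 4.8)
The plan is to derive all five congruences from the single basic fact that for $1\le j< 2^s$ one has $v_2\binom{2^s}{j}=s-v_2(j)$ (this is Lemma \ref{valpro} with $p=2$, $l=1$), together with Vandermonde's identity and Lucas's congruence modulo $2$. The unifying mechanism is the same as in Lemma \ref{bipro}: expand the left-hand binomial coefficient by Vandermonde along the ``top'' $2^s$-block, keep only those terms whose $2$-adic valuation is $\le 1$, and read off the survivors. Concretely, for items (1) and (2) I would write
\[
\binom{\a+2^s}{k}=\sum_{i=0}^{k}\binom{\a}{k-i}\binom{2^s}{i},
\]
and observe that $v_2\binom{2^s}{i}\ge s\ge 2$ unless $i=0$ (which contributes $\binom{\a}{k}$, nonzero only when the lower index is $\le\a<2^{s-1}$, so it contributes in (2) with $k=\b+2^s$ but \emph{not} in (1) with $k=\b+2^{s-1}$) or $v_2(i)=s-1$, i.e.\ $i=2^{s-1}$, which contributes $\binom{\a}{k-2^{s-1}}\binom{2^s}{2^{s-1}}\equiv 2\binom{\a}{k-2^{s-1}}\pmod{2^2}$ since $\binom{2^s}{2^{s-1}}=\binom{2}{1}\cdot(\text{odd})\equiv 2\pmod 4$ by Kummer/Lucas. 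Taking $k=\b+2^{s-1}$ gives (1) and $k=\b+2^s$ gives (2), using $\a,\b<2^{s-1}$ so that the digit bookkeeping is forced.

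For items (3), (4), (5) the lower index carries an extra $2^{s-1}$, so I would instead expand along the top $2^s$ in $\binom{\a+2^{s-1}+2^s}{k}=\sum_i\binom{\a+2^{s-1}}{k-i}\binom{2^s}{i}$ and again retain only $i=0$ and $i=2^{s-1}$. With $k=\b+2^{s-1}$ this yields $\binom{\a+2^{s-1}}{\b+2^{s-1}}+2\binom{\a+2^{s-1}}{\b}\pmod 4$, which is (3); with $k=\b+2^s$ it yields $\binom{\a+2^{s-1}}{\b}+2\binom{\a+2^{s-1}}{\b+2^{s-1}}\pmod4$ — note the $i=0$ term is $\binom{\a+2^{s-1}}{\b+2^s}$, which by Lucas modulo $2$ equals $\binom{\a+2^{s-1}}{\b}$ because the $2^s$-digit of $\a+2^{s-1}<2^s$ is $0$ while that of $\b+2^s$ is $1$ — wait, that forces it to vanish, so I should double-check: $\binom{\a+2^{s-1}}{\b}$ itself is what survives from matching the remaining blocks, and the careful Lucas comparison of digits of $\a+2^{s-1}$ against those of $\b$ and $\b+2^{s-1}$ is exactly what pins down (4). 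Finally (5), with $k=\b+2^{s-1}+2^s$, keeps only $i=0$, giving $\binom{\a+2^{s-1}}{\b+2^{s-1}}\pmod 4$ directly, since $i=2^{s-1}$ now produces $\binom{\a+2^{s-1}}{\b+2^s}\binom{2^s}{2^{s-1}}$ whose first factor vanishes modulo $2$ by Lucas.

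The main obstacle I anticipate is the bookkeeping in (4): unlike the odd-prime case, the coefficient $\binom 21=2$ is itself divisible by $2$, so a term carrying a factor $\binom{2^s}{2^{s-1}}\equiv2$ only needs its \emph{other} factor to be odd to survive modulo $4$, and one must be scrupulous about which Lucas digit comparisons make $\binom{\a+2^{s-1}}{\b}$, $\binom{\a+2^{s-1}}{\b+2^{s-1}}$, or $\binom{\a+2^{s-1}}{\b+2^s}$ odd versus zero; the constraints $\a,\b<2^{s-1}$ are what keep this from being ambiguous. Once the valuation estimate is in hand, each of the five is a short verification, and I would present (3) in detail and leave (1), (2), (4), (5) to the reader as in the proof of Lemma \ref{bipro}.
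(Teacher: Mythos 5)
Your overall strategy is exactly the paper's: Lemma \ref{bip2} is proved, like Lemma \ref{bipro}, by a Vandermonde expansion along the $2^s$-block combined with the valuation estimate of Lemma \ref{valpro} (with $l=1$) and Lucas's congruence. However, there is a concrete bookkeeping error in your enumeration of the surviving indices. In the expansion $\binom{\a+2^s}{k}=\sum_{i}\binom{\a}{k-i}\binom{2^s}{i}$ the terms of $2$-adic valuation at most $1$ are \emph{three}, not two: $i=0$ (coefficient $1$), $i=2^{s-1}$ (coefficient $\binom{2^s}{2^{s-1}}\equiv 2\pmod{4}$), and, whenever $k\geq 2^s$, also $i=2^s$ (coefficient $\binom{2^s}{2^s}=1$); Lemma \ref{valpro} only governs $j<2^s$ and says nothing about the endpoint $j=2^s$. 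You list only $i=0$ and $i=2^{s-1}$. In items (2), (4) and (5) the lower index $k$ is at least $2^s$, so the $i=0$ term $\binom{\a}{k}$ (resp.\ $\binom{\a+2^{s-1}}{k}$) vanishes outright because $k$ exceeds the upper index $\a<2^{s-1}$ (resp.\ $\a+2^{s-1}<2^s$), and the unit-coefficient terms $\binom{\a}{\b}$, $\binom{\a+2^{s-1}}{\b}$, $\binom{\a+2^{s-1}}{\b+2^{s-1}}$ appearing on the right-hand sides come precisely from $i=2^s$. Followed literally, your enumeration gives $0$ for item (2) rather than $\binom{\a}{\b}$; you sense the problem in item (4) (``wait, that forces it to vanish'') but resolve it only with the vague phrase ``what survives from matching the remaining blocks'' instead of identifying the $i=2^s$ term, and in item (5) you again misattribute the surviving term to $i=0$. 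Once $i=2^s$ is added to the list of low-valuation indices, all five congruences fall out exactly as you describe, and the argument coincides with the paper's (whose analogous computation for Lemma \ref{bipro}(3) explicitly lets the block index run up to $j=p$, i.e.\ up to the full block $p^s$).
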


\begin{proof}
This follows from a direct calculation using Lemma \ref{valpro} with $l=1.$
\end{proof}

\begin{theorem}\label{Mcop2}
Let $f$ be a 1-Lipschitz function on $\Z_2.$
Then, $f$ is uniformly differentiable modulo $2$ with $N_1(f)=1$ if and only if
its Mahler coefficients $\{ a_n \}_{n\geq0}$  satisfy the following conditions:

For all $n=n_{-}+2^s$ with $s\geq2,$
$a_{n_{-}+2^s} \equiv 0 \pmod{2^{s+1}}.$



\end{theorem}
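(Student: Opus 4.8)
The plan is to treat Theorem~\ref{Mcop2} as the $p=2$ instance of Theorem~\ref{Mcop3over} and to run the argument of the latter in this case. Observe first that for $p=2$ every base-$2$ digit is $0$ or $1$, so the range $2\le n_s\le p-1$ in part~(1) of Theorem~\ref{Mcop3over} is empty; only part~(2) survives, and that is exactly the assertion to be proved. Hence the whole proof reduces to the ``second case'' ($n=n_-+p^s$, $s\ge2$) of the proof of Theorem~\ref{Mcop3over}, carried out with Lemma~\ref{bip2} in place of Lemma~\ref{bipro}.

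For necessity I would fix $n=n_-+2^s$ with $s\ge2$, expand $a_n=\sum_{m=0}^{n}a_n(m)B_m$ as in \eqref{anform}, and sort the summands by the position $l$ of the leading binary digit of $m$ (so $m=m_-+2^l$ with $m_-<2^l$). From \eqref{vpformula} one gets $v_2(a_n(m))\ge 2^{s-l-1}$ and from Proposition~\ref{lipness} one gets $v_2(B_m)\ge l$; writing relation \eqref{ply3} as $B_m\equiv 2^{l}\partial_1f(m_-)\pmod{2^{l+1}}$, an elementary estimate then lets me replace $a_n(m)B_m$ by $a_n(m)\,2^{l}\partial_1f(m_-)$ modulo $2^{s+1}$, and shows moreover that every $m$ with $l\le s-3$ contributes $0$ modulo $2^{s+1}$. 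It then remains to evaluate the three surviving ``layers'' $l\in\{s-2,s-1,s\}$; I would split according to whether $n_-<2^{s-1}$ or $n_-=n_*+2^{s-1}$ with $n_*<2^{s-1}$, and apply Lemma~\ref{bip2}, Lucas's congruence, and the periodicity \eqref{equiv2} of $\partial_1f$ to each piece. The key simplification is that the intricate double-sum computation that needed Proposition~\ref{ABC} for odd $p$ now collapses: every binomial $\binom{p}{j}$ appearing there becomes $\binom{2}{1}=2$, and the ``leading/middle digit'' parameters $r,l$ of the odd proof are pinned to the value $1$, so the three layers telescope to $0$ modulo $2^{s+1}$.

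For the converse I would invoke \cite[Lemma~2.3]{Me20b}: it suffices to verify $B_{t+2^s}\equiv 2^{s-1}B_{t_0+2}\pmod{2^{s+1}}$ for all $s\ge1$ and $0\le t<2^s$, where $t_0\equiv t\pmod2$. Computing $B_{t+2^s}=\sum_n\bigl(\binom{t+2^s}{n}-\binom{t}{n}\bigr)a_n$ from \eqref{Bmform}--\eqref{Amnfor}, and using Lemma~\ref{valpro} together with the hypothesis $v_2(a_n)\ge\lfloor\log_2 n\rfloor+1$ for $n\ge4$ (and $v_2(a_n)\ge\lfloor\log_2 n\rfloor$ in general, by Proposition~\ref{lipness}) to discard all but finitely many summands, one reads off the congruence exactly as in the converse half of Theorem~\ref{Mcop3over}; the base case $s=1$ reduces to the explicit values $B_2=f(2)-f(0)$ and $B_3=f(3)-f(1)$.

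The step I expect to be the main obstacle is the valuation bookkeeping in the necessity direction. For odd $p$ the inequality $p^{s-l-1}\ge 2(s-l)-1$ already kills every layer with $l\le s-2$, leaving only two layers to analyze; for $p=2$ the bound $v_2(a_n(m))\ge 2^{s-l-1}$ is one power of $2$ too weak for the layer $l=s-2$. So I will have to detect the additional cancellation hidden in $\sum_m a_n(m)$ over that layer — again through the mod-$4$ congruences of Lemma~\ref{bip2} — in order to conclude that its total contribution is nevertheless divisible by $2^{s+1}$.
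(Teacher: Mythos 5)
Your converse half is fine and is essentially the paper's argument (reduce via \cite[Lemma 2.3]{Me20b} to $B_{t+2^s}\equiv 2^{s-1}B_{t_0+2}\pmod{2^{s+1}}$, discard all Mahler terms with $n\ge 4$ by the valuation count, and check $s=1$ by hand). The gap is exactly where you said you expect it, in the necessity half, and as written it is not closed. With only the estimate $v_2(a_n(m))\ge\lfloor\frac{n-1}{2^{l+1}-1}\rfloor\ (\ge 2^{s-l-1})$ from \eqref{vpformula}, the layer $m=m_-+2^{s-2}$ (i.e.\ $l=s-2$, relevant for $s\ge3$) only yields $v_2(a_n(m)B_m)\ge s$, and your plan to recover the missing factor of $2$ by ``cancellation hidden in $\sum_m a_n(m)$ over that layer, through the mod-$4$ congruences of Lemma \ref{bip2}'' cannot work as stated: after the legitimate replacement $B_m\equiv 2^{s-2}\partial_1f(m_-)\pmod{2^{s-1}}$, the layer's contribution is $2^{s-2}\sum_{m_-<2^{s-2}}a_n(m_-+2^{s-2})\,\partial_1f(m_-)$ with weights $\partial_1f(m_-)$ that are essentially arbitrary (they vary with $f$), so no cancellation across the sum can be expected; what is actually needed is a \emph{termwise} strengthening $v_2(a_n(m))\ge s-l+1$ at this layer, i.e.\ a sharpening of \eqref{vpformula} for $p=2$, which is a mod-$8$ statement that the mod-$4$ congruences of Lemma \ref{bip2} do not give.

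This is also the point where your proposal diverges from the paper: the paper never sees the layer $l=s-2$ at all, because it asserts (``a direct computation yields'') the stronger bound $v_2(a_n(m))\ge 2^{s-l-1}+1$, which at $l=s-2$ equals $3$ and kills that layer term by term, leaving only the two top layers $l=s-1$ and $l=s$; those two layers are then handled with Lemma \ref{bip2} and the relations \eqref{ply2}--\eqref{ply3} via an $A+B+C$ decomposition close to what you sketch. So to complete your argument you must either prove that sharpened valuation bound directly (for instance by writing $a_n(m)=\pm\sum_{\alpha}\binom{n}{m+2^{s-1}\alpha}$ from \eqref{anmfor} -- only a handful of binomials -- and estimating it modulo $2^{s-l+1}$ by Kummer/Lucas-type arguments), or find some other device; the specialization of the odd-$p$ proof together with Lemma \ref{bip2} alone does not suffice, precisely because for $p=2$ the inequality $p^{s-l-1}\ge s-l+1$ fails at $l=s-2$.
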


\begin{proof}
As in odd primes, we resort to the formula in (\ref{anmfor}) to estimate $ a_n(f)=\sum_{m=0}^{n} a_n(m)B_m$ in (\ref{anform}).
Writing $n= n_{-}+2^s$ for $s\geq2$ and $m= m_{-}+2^{l} $ with $ 0\leq l \leq s,$ from the formula for $a_n(m)$ in (\ref{vpformula}), its $2$-adic valuation is given by
 $$v_{2}(a_n(m)) \geq \lfloor \frac{n-1}{2^{l+1}-1}\rfloor.$$
 A direct computation yields
 $$v_{2}(a_n(m)) \geq 2^{s-l-1} +1.$$
If $s-l\geq 2,$ then by a simple inequality, we have
$$v_{2}(a_n(m)) \geq s -l +1.$$
The property that  $B_m$ are divisible by $2^l$ from Proposition \ref{lipness}  gives
 $$v_{2}(a_n(m)B_m) \geq s+1.$$
Thus, we have
\begin{equation}\label{anp2}
  a_n \equiv \sum_{m=2^{s-1}}^{2^s-1} a_n(m)B_m + \sum_{m=2^s}^{n} a_n(m)B_m \pmod{2^{s+1}}.
\end{equation}
We first deal with the case where $n_{-}<2^{s-1}.$
From the formula for $a_n(m),$
we obtain
$$ a_n \equiv \sum_{m_{-}=0}^{2^{s-1}-1} (-1)^{n_{-}-m_{-}}\binom{n_{-}+2^s}{m_{-}+2^{s-1}} B_{m_{-}+2^{s-1}} + \sum_{m_{-}=0}^{2^s -1} (-1)^{n_{-}-m_{-}}\binom{n_{-}+2^s}{m_{-}+2^{s}} B_{m_{-}+2^{s}} \pmod{2^{s+1}}.$$
By Lemma \ref{bip2} (1) and (2), it follows that
$$ a_n \equiv \sum_{m_{-}=0}^{2^{s-1}-1} (-1)^{n_{-}-m_{-}}\binom{n_{-}}{m_{-}} 2B_{m_{-}+2^{s-1}} + \sum_{m_{-}=0}^{2^{s-1}-1} (-1)^{n_{-}-m_{-}} \binom{n_{-}}{m_{-}} B_{m_{-}+2^{s}} \pmod{2^{s+1}}.$$
The congruence that $2B_{m_{-}+2^{s-1}} \equiv 2^s \partial_1f(m_{-})\equiv  B_{m_{-}+2^{s}}$ in (\ref{ply2}) and (\ref{ply3}) yields
$$ a_n \equiv 2^{s+1} \sum_{m_{-}=0}^{2^{s-1}-1} (-1)^{n_{-}-m_{-}}\binom{n_{-}}{m_{-}} \partial_1f(m_{-}) \equiv  0\pmod{2^{s+1}},$$
completing the verification of the first case.

Now, we turn to the case where $n_{-}:=n_* +2^{s-1}$, where $n_*<2^{s-1}.$
From the formula for $a_n(m),$ it can be seen from (\ref{anp2}) that

\begin{eqnarray*}
a_n &=& \sum_{0 \leq m_{-}\leq {n_*}}\left( (-1)^{n_{-}-m_{-}} \binom{n}{m}
+ (-1)^{n_{-}-m_{-}} \binom{n}{m+2^s}  \right)B_{m_{-}+2^{s-1}} \nonumber \\
&+& \sum_{m_{-}> {n_*}}(-1)^{n_{-}-m_{-}} \binom{n}{m} B_{m_{-}+2^{s-1}}
+ \sum_{ m_{-}=0}^{2^{s-1}-1}(-1)^{n_{-}-m_{-}} \binom{n}{m} B_{m_{-}+2^{s}}. \nonumber\\
&=& \sum_{m_{-}=0}^{2^{s-1}-1} (-1)^{n_{-}-m_{-}} \binom{n_{-}+2^s}{m_{-}+2^{s-1}}B_{m_{-}+2^{s-1}}
+ \sum_{ m_{-}=0}^{2^{s-1}-1}(-1)^{n_{-}-m_{-}}\binom{n_{*}+2^{s-1}+2^s} {m_{-}+2^{s-1}+2^{s}}B_{m_{-}+2^{s-1}}   \nonumber \\
&+& \sum_{m_{-}=0}^{2^{s}-1}(-1)^{n_{-}-m_{-}} \binom{n_{-}+2^s} {m_{-}+2^{s}}B_{m_{-}+2^{s}} := A + B+C. \nonumber
\end{eqnarray*}
By Lemma \ref{bip2} (3), it follows that
\begin{eqnarray}\label{EqA}
A &=& \sum_{m_{-}=0}^{2^{s-1}-1} (-1)^{n_{-}-m_{-}} \left( \binom{n_{*}+2^{s-1}}{m_{-}+2^{s-1}}
+ 2 \binom{n_{*}+2^{s-1}} {m_{-}} \right) B_{m_{-}+2^{s-1}} \nonumber \\
 & =&  \sum_{m_{-}=0}^{2^{s-1}-1} (-1)^{n_{-}-m_{-}} \binom{n_{*}+2^{s-1}}{m_{-}+2^{s-1}}B_{m_{-}+2^{s-1}} + \sum_{m_{-}=0}^{2^{s-1}-1}(-1)^{n_{-}-m_{-}}2\binom{n_{*}+2^{s-1}}{ m_{-}} B_{m_{-}+2^{s-1}} \nonumber\\
 &:=& A_1+A_2.
\end{eqnarray}

Similarly, by Lemma \ref{bip2} (5),
\begin{eqnarray}\label{EqB}
B= \sum_{ m_{-}=0}^{2^{s-1}-1}(-1)^{n_{-}-m_{-}}
\binom{n_{*}+2^{s-1}}{m_{-}+2^{s-1}}B_{m_{-}+2^{s-1}}.
\end{eqnarray}
From  Lemma \ref{bip2} (4), it follows that 
\begin{eqnarray}\label{EqC}
    C &=&\sum_{m_{-}=0}^{2^{s-1}-1}(-1)^{n_{-}-m_{-}} \binom{n_{-}+2^s} {m_{-}+2^{s}}B_{m_{-}+2^{s}} + \sum_{m_{*}=0}^{n_*+2^{s-1}} (-1)^{n_{*}-m_{*}} \binom{n_{*}+2^{s-1}+2^s} {m_{*}+2^{s-1}+2^{s}}B_{m_{*}+2^{s-1}+2^{s}} \nonumber \\
    &=& \sum_{m_{-}=0}^{2^{s-1}-1}(-1)^{n_{-}-m_{-}}
\left(\binom{n_{*}+2^{s-1}} {m_{-}}+ 2\binom{n_{*}+2^{s-1}} {m_{-}+2^{s-1}} \right)B_{m_{-}+2^{s}} \nonumber\\
&+&\sum_{m_{-}=0}^{2^{s-1}-1}(-1)^{n_{-}-m_{-}} \binom{n_{*}+2^{s-1}} {m_{*}+2^{s-1}} B_{m_{-}+2^{s}} \nonumber\\
&\equiv& \sum_{m_{-}=0}^{2^{s-1}-1}(-1)^{n_{-}-m_{-}}
\binom{n_{*}+2^{s-1}} {m_{-}} B_{m_{-}+2^{s}} +
\sum_{m_{*}=0}^{2^{s-1}-1}(-1)^{n_{*}-m_{*}}
\binom{n_{*}+2^{s-1}} {m_{*}+2^{s-1}} B_{m_{-}+2^{s}} \nonumber \\
&:=& C_1+C_2.
\end{eqnarray}

If we add up the three sums $A, B,$ and $C$ in (\ref{EqA}),  (\ref{EqB}) and  (\ref{EqC}), then
we have
\begin{eqnarray*}
a_n= A+B+C \equiv 2A_1 +2A_2 +C_2 \pmod{2^{s+1}}
\end{eqnarray*}
because  it follows from Lucas's congruence that
$A_1 \equiv B \pmod{2^{s+1}}$ and $A_2 \equiv C_1\pmod{2^{s+1}}.$
Because $2A_2 \equiv 0 \pmod{2^{s+1}},$ and $2A_1 \equiv C_2 \pmod{2^{s+1}},$
it follows that
$$ a_n= A+B+C \equiv 2A_1 +C_2 \equiv 2C_2 \equiv 0 \pmod{2^{s+1}}.$$
Thus, this completes the proof of the ``only if" part.

For the ``if" part, it suffices by \cite[Lemma 2.3]{Me20b} to show that $B_{t+2^s} \equiv 2^{s-1}B_{t_0+2}\pmod{2^{s+1}}$ for all $s\geq1 $ and $t<2^s.$
To this end, we claim that for all $s\geq1 $ and $t<2^s,$
\begin{equation}\label{Bmfor}
 B_{t+2^s} \equiv 2^{s}(a_1 + \frac{a_2}{2} +t_0\frac{a_3}{2}) \pmod{2^{s+1}}.
\end{equation}
Therefore, the claim implies the assertion, as desired.
In fact,
substituting the congruence in (\ref{Bmfor}) with $s=1$ into (\ref{Bmfor}) again yields the assertion.
 By direct computations using (\ref{vancf}) and (\ref{Mcof}),
 the congruences in (\ref{Bmfor}) hold clearly for $s=1.$
 For all $s\geq2$ and $t<2^s$, we use the formulas in (\ref{Bmform}) and (\ref{Amnfor}) to estimate $B_{t+2^s}=\sum_{n=0}^{t+2^s}A_{t+2^s}(n)a_n.$
  As $$A_{t+2^s}=\binom{t+2^s}{n}-\binom{t}{n} =\sum_{j=1}^{n}\binom{t}{n-j}\binom{2^s}{j},$$
 from Lemma \ref{bipro} and the assumptions on $a_n$,
 we obtain, for $1\leq j \leq n,$  $$v_2(A_{t+2^s}(n)a_n)\geq s-v_2(j)+ \lfloor {\rm log}_{2} n \rfloor +1 \geq s+1.$$
Hence, we have
\begin{equation}\label{Bnp2}
  B_{t+2^s}\equiv \sum_{n=0}^{3}A_{t+2^s}(n)a_n \pmod{2^{s+1}}.
\end{equation}

Direct computation gives
\begin{eqnarray*}
  A_{t+2^s}(0) &=0,&~~A_{t+2^s}(1)\equiv 2^s \pmod{2^{s+1}}, \\
   A_{t+2^s}(2)&\equiv& 2^{s-1} \pmod{2^{s}},~~A_{t+2^s}(3)\equiv t_02^{s-1} \pmod{2^{s}}.
\end{eqnarray*}
Substituting them into (\ref{Bnp2}) gives the claim, hence completing the proof.
\end{proof}

\section{ Ergodicity criterion for uniformly differentiable modulo $p$ functions }

\subsection{Measure-preservation criterion}
Regarding ergodicty, it is essential to  examine a measure-preservation criterion for a uniformly differentiable modulo $p$, 1-Lipschitz function on $\Zp$  in terms of the coefficients of Mahler and van der Put.
To this end, we recall a measure-preservation criterion due to  Khrennikov and Yurova for 1-Lipschitz functions on $\Zp$:

From now on, we write  a 1-Lipschitz function on $\Z_p$ as
\begin{eqnarray} \label{vanexp0}
f(x) &=&\sum_{m =0}^{\infty} B_m \chi(m, x) =\sum_{m = 0}^{\infty} p^{\lfloor {\rm log}_{p}m\rfloor}b_m\chi(m, x)~~~(b_m \in \Z_p).
\end{eqnarray}

\begin{theorem}\cite[Theorem 2.1]{KY}\label{MPvan}
Let $f:\Zp \rightarrow \Zp$ be a 1-Lipschitz function in van der Put's expansion represented by (\ref{vanexp0}). Then, $f$ is measure-preserving if and only if the following conditions are satisfied:
\begin{itemize}
\item[] {\rm (i)} $\{b_0, b_1, \cdots, b_{p-1}\}$ is a complete set of all distinct residues modulo $p$;
\item[] {\rm (ii)} for any integer $s\geq 1$, $0 \leq k<p^s$,  $\{b_{k+\ell p^s}\}_{1\leq \ell \leq p-1}$ is a complete set of all distinct nonzero residues modulo $p$.
\end{itemize}
\end{theorem}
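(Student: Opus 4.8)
The plan is to characterize measure-preservation through the finite reductions and then analyze them level by level. By Proposition~\ref{mpeq}, $f$ is measure-preserving if and only if $f_{/n}$ is a bijection of $\Z/p^n\Z$ for every $n\ge 1$, so I would study the tower $(f_{/n})_{n\ge1}$ one step at a time. Fix $s\ge1$ and suppose $f_{/s}$ is bijective. Since $f$ is $1$-Lipschitz, for $0\le k<p^s$ and $0\le\ell\le p-1$ we have $f(k+\ell p^s)\equiv f(k)\pmod{p^s}$, so I can write $f(k+\ell p^s)\equiv f(k)+p^s c_{k,\ell}\pmod{p^{s+1}}$ with $c_{k,\ell}\in\Z/p\Z$ and $c_{k,0}=0$. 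Partitioning $\Z/p^{s+1}\Z$ into the blocks $\{k+\ell p^s:\ell\in\Z/p\Z\}$ indexed by $k\in\Z/p^s\Z$ and using that $f_{/s}$ permutes the base classes, a short bookkeeping argument shows that $f_{/s+1}$ is bijective if and only if, for every $k$, the map $\ell\mapsto c_{k,\ell}$ is a permutation of $\Z/p\Z$; since $c_{k,0}=0$, this says exactly that $\{c_{k,\ell}:1\le\ell\le p-1\}$ is the set of all nonzero residues modulo $p$.

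Next I would read off the $c_{k,\ell}$ from van der Put coefficients. For $0\le k<p^s$ and $1\le\ell\le p-1$, the base-$p$ expansion of $k+\ell p^s$ has leading digit $\ell$ in position $s$, so $(k+\ell p^s)_-=k$, and formula (\ref{vancf}) gives $B_{k+\ell p^s}=f(k+\ell p^s)-f(k)$; by Proposition~\ref{lipness}(i), $v_p(B_{k+\ell p^s})\ge s$, hence $B_{k+\ell p^s}=p^s b_{k+\ell p^s}$ and $c_{k,\ell}\equiv b_{k+\ell p^s}\pmod p$. Therefore the induction step becomes: assuming $f_{/s}$ is bijective, $f_{/s+1}$ is bijective if and only if $\{b_{k+\ell p^s}:1\le\ell\le p-1\}$ is the set of nonzero residues mod $p$ for every $0\le k<p^s$, which is precisely condition (ii) at level $s$ (the full range of $k$ appears because a bijective $f_{/s}$ realizes every base class). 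For the base case, $f_{/1}$ sends $k\mapsto f(k)=B_k$, and $B_k\equiv b_k\pmod p$ for $0\le k\le p-1$, so $f_{/1}$ is bijective if and only if $\{b_0,\dots,b_{p-1}\}$ is a complete residue system mod $p$, i.e.\ condition (i). An induction on $n$ then gives that $f_{/n}$ is bijective for all $n\ge1$ exactly when (i) and (ii) hold, and Proposition~\ref{mpeq} completes the proof.

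The step I expect to be the main obstacle is the decoupling inside the induction: proving rigorously that bijectivity of $f_{/s+1}$ is equivalent to bijectivity of $f_{/s}$ together with the blockwise permutation conditions, and carrying the necessity direction --- that a failure of (i), or of (ii) at some level $s$, actually forces some $f_{/n}$ to omit a residue class. One must also treat the degenerate index $m=0$ and the stated convention for $\lfloor\log_p 0\rfloor$ with some care; both issues are harmless once one notes that only residues modulo $p$ intervene and that a bijection of $\Z/p^s\Z$ attains every class.
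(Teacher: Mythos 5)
Your argument is correct. Note that the paper does not actually prove this statement — it is quoted from Khrennikov–Yurova and the ``proof'' is a pointer to \cite{KY} and to \cite[Proposition 3.2]{J21b} — but your level-by-level analysis is exactly the standard route those sources take: reduce measure-preservation to bijectivity of every $f_{/n}$ via Proposition~\ref{mpeq}, observe that $1$-Lipschitzness makes $f_{/s+1}$ fiber over $f_{/s}$ along the blocks $\{k+\ell p^s\}_\ell$, so that $f_{/s+1}$ is bijective iff $f_{/s}$ is and each block map $\ell\mapsto c_{k,\ell}$ permutes $\Z/p\Z$, and then identify $c_{k,\ell}\equiv b_{k+\ell p^s}\pmod p$ from (\ref{vancf}) since $(k+\ell p^s)_-=k$. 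The only point worth spelling out in a final write-up is the counting step you flag yourself (injectivity of $f_{/s+1}$ forces injectivity of $f_{/s}$ because two distinct base fibers cannot map into one target fiber of size $p$), which is a one-line pigeonhole argument; the $m=0$ normalization is indeed harmless since condition (i) only involves $b_m=f(m)$ modulo $p$ for $0\le m\le p-1$.
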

\begin{proof}
  See \cite[Proposition 3.2]{J21b} for an alternative proof.
\end{proof}

By this criterion, we deduce the following simple conditions for a measure-measure-preserving, uniformly differentiable modulo $2$, 1-Lipschitz function on $\Z_2.$

\begin{proposition}\label{MPudmp2}
Let $f$ be a uniformly differentiable modulo $2$, 1-Lipschitz function on $\Z_2$, with $N_1(f)=1.$ Then, $f$ is measure-preserving if and only if the van der Put coefficients $\{ b_{i}\}_{0 \leq i\leq3}$ satisfy the following conditions:
$$ b_0 +b_1 \equiv 1 \pmod{2};~b_2 \equiv 1 \pmod{2};~b_3\equiv 1 \pmod{2}.$$
\end{proposition}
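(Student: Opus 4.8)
The plan is to apply the Khrennikov--Yurova criterion (Theorem~\ref{MPvan}) to the uniformly differentiable modulo $2$ function $f$, and to use the structural rigidity of such functions---encapsulated in the congruences (\ref{equsns3}) and (\ref{equivred}) relating all normalized van der Put coefficients $b_{r+\ell 2^s}$ to the ``bottom-layer'' coefficients---to collapse the infinitely many conditions in Theorem~\ref{MPvan} down to the three conditions on $b_0,b_1,b_2,b_3$. First I would spell out what Theorem~\ref{MPvan} demands when $p=2$: condition~(i) says $\{b_0,b_1\}$ is a complete residue system modulo $2$, i.e. $b_0+b_1\equiv 1\pmod 2$; and condition~(ii) says that for every $s\geq 1$ and every $0\leq k<2^s$, the singleton $\{b_{k+2^s}\}$ must be the unique nonzero residue modulo $2$, i.e. $b_{k+2^s}\equiv 1\pmod 2$ for all such $k$ and $s$.

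Next I would reduce condition~(ii) to a statement about $b_2$ and $b_3$ only. By (\ref{equsns3}), for every $s\geq 1$ and every $r$ with $0\leq \bar r<2$ and $\bar r\equiv r\pmod 2$, we have $b_{r+2^s}\equiv \partial_1 f(\bar r)\equiv b_{\bar r+2}\pmod 2$. Hence the value of $b_{k+2^s}$ modulo $2$ depends only on the parity of $k$: if $k$ is even it equals $b_2\bmod 2$ (taking $\bar r=0$, since $b_{0+2}=b_2$), and if $k$ is odd it equals $b_3\bmod 2$ (taking $\bar r=1$, since $b_{1+2}=b_3$). Therefore the family of conditions $\{b_{k+2^s}\equiv 1\pmod 2 : s\geq 1,\ 0\leq k<2^s\}$ is equivalent to the single pair $b_2\equiv 1\pmod 2$ and $b_3\equiv 1\pmod 2$ (one should note that for each $s\geq 1$ both parities of $k$ actually occur in the range $0\leq k<2^s$, so no condition is vacuous). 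Combining this with the translation of condition~(i) gives exactly the three asserted congruences $b_0+b_1\equiv 1$, $b_2\equiv 1$, $b_3\equiv 1\pmod 2$, and conversely these three imply both (i) and (ii), so $f$ is measure-preserving if and only if they hold.

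The only genuinely delicate point---and the step I would be most careful about---is the passage from ``$b_{r+2^s}$ for arbitrary $s\geq 1$'' to ``$b_{\bar r+2}$'': I must make sure that (\ref{equsns3}) is being applied correctly, in particular that the base case $s=1$ is covered and that $b_{0+2}$ and $b_{1+2}$ are legitimate instances of the normalized coefficients $b_{r+2^s}$ with $s=1$, $r\in\{0,1\}$, which they are since $0,1<2^1$. I would also double-check that condition~(i) of Theorem~\ref{MPvan} for $p=2$ really is just $b_0\not\equiv b_1\pmod 2$, equivalently $b_0+b_1\equiv 1\pmod 2$, and that nothing about the $s=0$ layer has been overlooked. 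Everything else is a direct, essentially bookkeeping, translation, so the proof will be short once these parity reductions are in place.
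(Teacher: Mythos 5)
Your proposal is correct and follows exactly the route the paper takes (the paper's proof is a one-line citation of Theorem~\ref{MPvan} together with the reduction~(\ref{equivred})/(\ref{equsns3}), which you have simply written out in full). The parity bookkeeping, the observation that condition~(ii) for $p=2$ collapses to $b_{k+2^s}\equiv 1\pmod 2$, and the reduction to $b_2$ and $b_3$ via $b_{k+2^s}\equiv b_{\bar k+2}\pmod 2$ are all exactly what the paper intends.
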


\begin{proof}
It follows from Theorem \ref{MPvan} and (\ref{equivred}).
\end{proof}

By means of Proposition \ref{MPudmp2}, we are able to drop the assumption that $f$ is a 1-Lipschitz function on $\Z_2$ in the following result.

\begin{proposition}\label{MPudmp2sec}
A 1-Lipschitz function $f$ on $\Z_2$ is measure-preserving if and only if the following conditions are satisfied:

(i) $f$ is uniformly differentiable modulo $2$ with $N_1(f)=1.$

(ii)  $b_0 +b_1 \equiv 1 \pmod{2};~b_2 \equiv 1 \pmod{2};~b_3\equiv 1 \pmod{2}.$
\end{proposition}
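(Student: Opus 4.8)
The plan is to reduce Proposition~\ref{MPudmp2sec} to Proposition~\ref{MPudmp2} by showing that, for a $1$-Lipschitz function $f$ on $\Z_2$, measure-preservation already forces $f$ to be uniformly differentiable modulo $2$ with $N_1(f)=1$. The ``if'' direction is immediate: if (i) and (ii) hold, then (i) lets us invoke Proposition~\ref{MPudmp2}, and (ii) is exactly its hypothesis, so $f$ is measure-preserving. All the work is in the ``only if'' direction, where we must produce (i) for free from the assumption that $f$ is measure-preserving.

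For the ``only if'' direction, I would argue as follows. Assume $f$ is a measure-preserving $1$-Lipschitz function on $\Z_2$. By Theorem~\ref{MPvan}, for every $s\geq1$ and every $0\leq k<2^s$, the single coefficient $b_{k+2^s}$ (the case $p=2$ leaves only $\ell=1$) must be a nonzero residue modulo $2$, i.e. $b_{k+2^s}\equiv 1\pmod 2$, equivalently $B_{k+2^s}\equiv 2^s\pmod{2^{s+1}}$; likewise condition~(i) of Theorem~\ref{MPvan} gives $b_0+b_1\equiv1\pmod2$. In particular $b_2\equiv b_3\equiv1\pmod2$, which is half of condition~(ii), and the constraint $b_0+b_1\equiv1\pmod2$ is the other half — so (ii) comes out automatically too, and the real content is deriving (i). For that, I would verify the van der Put characterization of uniform differentiability modulo $2$ with $N_1(f)=1$: by \cite[Lemma~2.3]{Me20b} it suffices to check $B_{t+2^s}\equiv 2^{s-1}B_{t_0+2}\pmod{2^{s+1}}$ for all $s\geq1$, $t<2^s$, where $t_0\equiv t\pmod 2$. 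But measure-preservation gave us $B_{t+2^s}\equiv 2^s\pmod{2^{s+1}}$ for \emph{every} $t<2^s$ and every $s\geq1$; applying this with $s=1$ gives $B_{t_0+2}\equiv 2\pmod{4}$, hence $2^{s-1}B_{t_0+2}\equiv 2^s\pmod{2^{s+1}}$, and the two sides match. Thus $f$ is uniformly differentiable modulo $2$ with $N_1(f)=1$, establishing~(i).

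I expect the main obstacle to be the bookkeeping around \cite[Lemma~2.3]{Me20b}: one must be careful that the criterion there is stated for \emph{all} residues $t<2^s$ with the reduction $t\mapsto t_0\bmod 2$ on the right-hand side, and that measure-preservation in the $p=2$ case of Theorem~\ref{MPvan} really does pin down $B_{k+2^s}\bmod 2^{s+1}$ for \emph{every} $k$, not merely for representatives. Once that uniformity is in hand, the congruence $B_{t+2^s}\equiv 2^s\equiv 2^{s-1}B_{t_0+2}\pmod{2^{s+1}}$ is forced, and no delicate estimates beyond what is already recorded in Proposition~\ref{lipness} are needed. A minor point to handle cleanly is the base case $s=1$ of \cite[Lemma~2.3]{Me20b}, which holds trivially since then $t=t_0$.
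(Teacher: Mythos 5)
Your proposal is correct, and it reaches the conclusion by a genuinely different route from the paper. For the necessity direction the paper does not go through the van der Put criterion at all: it invokes the structural characterization of measure-preserving $1$-Lipschitz functions on $\Z_2$ from \cite[Theorem 4.44]{AK}, namely $f(x)=c+x+2g(x)$ with $g$ $1$-Lipschitz, and then reads off uniform differentiability modulo $2$ with $\partial_1 f\equiv 1$ and $N_1(f)=1$ directly from this form (since $g(u+2^sh)\equiv g(u)\pmod{2^s}$ gives $f(u+2^sh)\equiv f(u)+2^sh\pmod{2^{s+1}}$). You instead stay entirely inside the van der Put framework: Theorem~\ref{MPvan} with $p=2$ forces $b_{k+2^s}\equiv1\pmod2$, hence $B_{k+2^s}\equiv2^s\pmod{2^{s+1}}$, for every $s\geq1$ and every $k<2^s$, and this uniform normalization makes Memi\'c's criterion $B_{t+2^s}\equiv2^{s-1}B_{t_0+2}\pmod{2^{s+1}}$ from \cite[Lemma 2.3]{Me20b} hold automatically — which is exactly the sufficiency test the paper itself uses in the converse halves of Theorems~\ref{Mcop3over} and~\ref{Mcop2}, so your reliance on it is consistent with the paper's own usage. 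Your route has the advantage of being self-contained relative to the tools already deployed in the paper (Theorem~\ref{MPvan} plus the Memi\'c lemma) and of making transparent \emph{why} $p=2$ is special (only $\ell=1$ survives, so measure-preservation pins down every $B_{k+2^s}$ modulo $2^{s+1}$); the paper's route is shorter because the additive normal form $c+x+2g$ yields $\partial_1f$ in one line, at the cost of importing an external structure theorem. The sufficiency direction is handled identically in both (reduce to Proposition~\ref{MPudmp2}).
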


\begin{proof}
The necessity follows from \cite [Proposition 9.24]{AK}. Indeed,
it suffices by Proposition \ref{MPudmp2} that  part (i) holds.
From \cite [Theorem 4.44]{AK}, $f$ is measure-preserving if and only if
$f =c+x +2g(x),$ where $c \in \Z_2$ is a constant and $g$ is a certain 1-Lipschitz function on $\Z_2$. Then part (i) follows directly by applying the explicit expression of $f$
to the definition of  a uniformly differentiable modulo 2 function.
For sufficiency, it is enough to show from Proposition \ref{MPudmp2} that $f$ is 1-Lipschitz if we use (i) and the second and third condition of (ii).
\end{proof}

In terms of the Mahler coefficients of  a 1-Lipschitz function $f$ represented by 
\begin{eqnarray}\label{Mex0}
f(x) &=&\sum_{m = 0}^{\infty} a_m\binom{x}{m}=\sum_{m = 0}^{\infty} p^{\lfloor {\rm log}_{p}m\rfloor}c_m\binom{x}{m}~~~(c_m \in \Z_p),
\end{eqnarray}
Proposition \ref{MPudmp2} has the following parallel result. 

\begin{proposition}
Let $f$ be a uniformly differentiable modulo $2$, 1-Lipschitz function on $\Z_2$, with $N_1(f)=1.$ Then, $f$ is measure-preserving if and only if the Mahler coefficient $a_1$ is a unit in $\Z_2$, that is,
$$ a_1 \equiv 1 \pmod{2}.$$
\end{proposition}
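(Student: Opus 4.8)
The plan is to translate the van der Put condition from Proposition~\ref{MPudmp2} into a statement about the Mahler coefficient $a_1$, using the fact that $f$ is already assumed to be uniformly differentiable modulo $2$ with $N_1(f)=1$. By Proposition~\ref{MPudmp2}, such an $f$ is measure-preserving if and only if $b_0+b_1\equiv 1\pmod 2$, $b_2\equiv 1\pmod 2$, and $b_3\equiv 1\pmod 2$. So it suffices to show that, under the standing hypothesis on $f$, these three congruences are together equivalent to $a_1\equiv 1\pmod 2$. I would extract the relevant coefficients via the formulas (\ref{vancf}) and (\ref{Mcof}): namely $B_0=b_0=f(0)$, $B_1=b_1=f(1)$, $B_2=b_2=f(2)-f(0)$, $B_3=b_3=f(3)-f(1)$ (recalling that for $2\le m\le 3$ the floor $\lfloor\log_2 m\rfloor=1$, so $b_m=B_m$), together with $a_1=f(1)-f(0)$.

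First I would note that $b_0+b_1=f(0)+f(1)\equiv f(1)-f(0)=a_1\pmod 2$, which already gives the equivalence $b_0+b_1\equiv 1\pmod2 \Leftrightarrow a_1\equiv 1\pmod2$. So the content of the proof reduces to showing that, when $f$ is uniformly differentiable modulo $2$ with $N_1(f)=1$, the two further conditions $b_2\equiv 1\pmod 2$ and $b_3\equiv 1\pmod 2$ are \emph{automatic} — equivalently, that they are forced, not by measure-preservation, but by the differentiability structure together with the single condition $a_1\equiv 1\pmod 2$. Concretely, I would use (\ref{equsns3}) with $s=1$: $b_2=b_{0+2}\equiv\partial_1 f(0)\pmod 2$ and $b_3=b_{1+2}\equiv\partial_1 f(1)\pmod2$. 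Then the periodicity relation (\ref{equiv2}) is vacuous here (the residues $0,1$ are already the reduced ones), so the real input is to compute $\partial_1 f(0)$ and $\partial_1 f(1)$ modulo $2$ in terms of the Mahler data. Using (\ref{diffmodp1}) with $u=0$, $s=1$, $h=1$ gives $f(2)\equiv f(0)+2\partial_1 f(0)\pmod 4$, hence $\partial_1 f(0)\equiv (f(2)-f(0))/2 = b_2/1$ is the half-integer $B_2/2$ — but here I must be careful: $B_2$ itself is only divisible by $2^0$, so instead I should relate $\partial_1 f(0)\bmod 2$ directly to Mahler coefficients of higher index. The cleaner route is (\ref{Bmfor}) / (\ref{Bmfor}) from the proof of Theorem~\ref{Mcop2}, which states $B_{t+2^s}\equiv 2^s(a_1+\tfrac{a_2}{2}+t_0\tfrac{a_3}{2})\pmod{2^{s+1}}$; taking $s=1$, $t=t_0=0$ gives $b_2=B_2\equiv a_1+\tfrac{a_2}{2}\pmod 2$ and taking $t=t_0=1$ gives $b_3=B_3\equiv a_1+\tfrac{a_2}{2}+\tfrac{a_3}{2}\pmod2$. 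Since $f$ is $1$-Lipschitz, $a_2$ and $a_3$ are both divisible by $2$ by Proposition~\ref{lipness}(ii), so $\tfrac{a_2}{2},\tfrac{a_3}{2}\in\Z_2$; but this alone does not make them even.

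The main obstacle, then, is exactly this: I need $b_2\equiv 1$ and $b_3\equiv 1$ to follow from $a_1\equiv 1$, which requires knowing $\tfrac{a_2}{2}\equiv 0\pmod 2$ and $\tfrac{a_3}{2}\equiv 0\pmod 2$, i.e. $a_2\equiv 0\pmod 4$ and $a_3\equiv 0\pmod 4$. For $a_3$ this is precisely condition of Theorem~\ref{Mcop2} with $n=3=1+2$, $s=1$? — no, Theorem~\ref{Mcop2} only constrains $n=n_-+2^s$ with $s\ge 2$, so it does \emph{not} directly give $a_2\equiv 0\pmod 4$ or $a_3\equiv 0\pmod 4$. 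Hence the resolution must instead be that $b_2\equiv1$ and $b_3\equiv1$ are genuinely not automatic, and the correct reading is that measure-preservation is equivalent to $a_1\equiv 1\pmod2$ \emph{plus} the evenness of $\tfrac{a_2}{2}$ and $\tfrac{a_3}{2}$ — but the proposition asserts equivalence with $a_1\equiv1$ alone, so there must be an identity forcing $b_2+b_3$ or each of $b_2,b_3$ to be odd under the hypothesis. I would resolve this by computing $\partial_1 f(0)$ and $\partial_1 f(1)$ via the van der Put side: from (\ref{ply3}) with $r=0,\ell=1,s=1$, $B_2\equiv 2\partial_1 f(0)\pmod 4$, which says $\partial_1 f(0)\equiv b_2\pmod 2$ is a tautology, but combined with the differentiability definition applied at $u=0$ with larger $s$, the value $\partial_1 f(0)\bmod 2$ equals $b_{2^s}\bmod 2$ for all $s$, and in particular the $s=2$ coefficient $b_4$, which \emph{is} constrained by Theorem~\ref{Mcop2} ($a_4\equiv0\pmod8$) — chasing this through (\ref{Bmfor}) with $s=2$ shows $b_4\equiv a_1+\tfrac{a_2}{2}\pmod2$ as well, so consistency forces nothing new. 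Given the tension, I expect the actual proof in the paper is short and simply invokes Proposition~\ref{MPudmp2sec} together with the $s=1$ case of the formula for $B_{t+2^s}$ derived inside the proof of Theorem~\ref{Mcop2}, reading off $b_0+b_1\equiv a_1$, $b_2\equiv a_1+\tfrac{a_2}{2}$, $b_3\equiv a_1+\tfrac{a_2}{2}+\tfrac{a_3}{2}\pmod 2$, and then observing that for a function satisfying the standing hypothesis the quantities $\tfrac{a_2}{2}$ and $\tfrac{a_3}{2}$ are automatically even — a fact I would extract from the ``if'' direction of Theorem~\ref{Mcop2} applied at the borderline index, or more likely from a direct computation using (\ref{Mcof}) and the $1$-Lipschitz congruences $f(2)\equiv f(0)$, $f(3)\equiv f(1)\pmod 2$ together with $f(u+2h)\equiv f(u)\pmod 2$. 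The crux of the write-up is pinning down that last evenness claim cleanly; everything else is bookkeeping with (\ref{vancf}), (\ref{Mcof}), and Proposition~\ref{MPudmp2}.
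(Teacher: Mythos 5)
Your bookkeeping is correct and coincides with what the paper actually does: writing $c_m=2^{-\lfloor\log_2 m\rfloor}a_m$, one has $b_0=c_0$, $b_1=c_0+c_1$, $b_2=c_1+c_2$, $b_3=c_1+3c_2+c_3$, so by Proposition~\ref{MPudmp2} measure-preservation is equivalent to $c_1\equiv 1$, $c_2\equiv 0$, $c_3\equiv 0\pmod{2}$, i.e.\ to $a_1\equiv 1\pmod{2}$ \emph{together with} $a_2\equiv 0\pmod{4}$ and $a_3\equiv 0\pmod{4}$. You correctly isolate the crux --- that the last two congruences would have to be automatic for a uniformly differentiable modulo $2$, $1$-Lipschitz function with $N_1(f)=1$ --- and you correctly observe that Theorem~\ref{Mcop2} only controls $a_{n_-+2^s}$ for $s\geq 2$ and therefore cannot supply them. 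But you never establish the evenness of $a_2/2$ and $a_3/2$; the proposal ends by conjecturing that some direct computation will yield it. That is a genuine gap, and it cannot be closed: take $f(x)=x^2=\binom{x}{1}+2\binom{x}{2}$, which is $1$-Lipschitz and uniformly differentiable modulo $2$ with $N_1(f)=1$ (here $\partial_1f(u)=2u\in\Z_2$), and has $a_1=1$, $a_2=2$, hence $c_2=1\not\equiv 0\pmod{2}$; equivalently $b_2=\tfrac12\bigl(f(2)-f(0)\bigr)=2\equiv 0\pmod{2}$, so $f$ is not measure-preserving even though $a_1\equiv 1\pmod{2}$. So the ``if'' direction fails unless $a_2\equiv 0\pmod{4}$ and $a_3\equiv 0\pmod{4}$ are added to the statement; only the ``only if'' direction ($b_0+b_1\equiv a_1\pmod 2$) survives as you proved it.

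For comparison, the paper's own argument is exactly your change-of-basis computation followed by the bare assertion that $c_2\equiv 0\equiv c_3\pmod{2}$, with no justification; the step you could not supply is precisely the step the paper elides, so your suspicion that something is missing is sound and is not a failure of technique on your part. The defensible version of the result is: under the standing hypotheses, $f$ is measure-preserving if and only if $a_1\equiv 1\pmod{2}$, $a_2\equiv 0\pmod{4}$ and $a_3\equiv 0\pmod{4}$; the genuine contribution of Theorem~\ref{Mcop2} is only that the remaining conditions $a_{n_-+2^s}\equiv 0\pmod{2^{s+1}}$ for $s\geq 2$ from the general Mahler measure-preservation criterion are automatic and hence redundant. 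If you rewrite your argument with that corrected target, everything you already have (the relations for $b_0,\dots,b_3$ plus Proposition~\ref{MPudmp2}) constitutes a complete proof.
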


\begin{proof}
This follows from Proposition \ref{MPudmp2} by using the relations between the coefficients of Mahler and van der Put: noting that $c_0=a_0$ and  $c_1=a_1$,
\begin{equation}\label{relMV}
b_0 =c_0;~ b_1=c_0+c_1;~ b_2=c_1+c_2;~b_3=c_1 +3c_2+c_3.
\end{equation}

An alternative proof comes from a measure-preservation criterion in the Mahler expansions in \cite [Theorem 4.40]{AK} or \cite[Proposition 4.2]{J21b}: $a_1 \equiv 1 \pmod{2}$ and $a_n \equiv 0 \pmod{2^{s+1}}$ for all $n=n_{-}+2^s$ with $s\geq1.$
Noting that $c_2 \equiv 0 \equiv c_3 \pmod{2},$ Theorem \ref{Mcop2} makes the latter conditions redundant.
\end{proof}

Let us state a measure-preservation criterion for the case of $p\geq 3.$

\begin{proposition}\label{mpdiff}
Let $f$ be a uniformly differentiable modulo $p$, 1-Lipschitz function on $\Zp$, with $N_1(f)=1,$  whose  Mahler expansion is given by (\ref{Mex0}).
Then, $f$ is measure-preserving if and only if the following conditions are satisfied:
\begin{itemize}
\item[] {\rm (i)} $\{f(0), f(1), \cdots, f(p-1)\}$ is a complete set of all distinct residues modulo $p$;

\item[] {\rm (ii)} $b_{i+p} \not \equiv 0 \pmod{p}$ for all $0 \leq i\leq p-1;$

Equivalently, for all $0 \leq i<p$,
$$\sum_{n=0}^{p-1}\l_n^{i}c_n + \sum_{n=0}^{i}\binom{i}{n}c_{n+p}\not\equiv 0\pmod{p},$$
where $\l_n^{i}$ is an integer given by $\l_n^{i} = \frac{1}{p}(\binom{i+p}{n}-\binom{i}{n}).$
\end{itemize}
\end{proposition}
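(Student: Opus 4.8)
The plan is to derive the measure-preservation criterion for $p\geq 3$ from the van der Put criterion in Theorem \ref{MPvan}, specialized to uniformly differentiable modulo $p$ functions with $N_1(f)=1$, and then translate the van der Put conditions into the Mahler coefficients. First I would recall from Theorem \ref{MPvan} that $f$ is measure-preserving if and only if $\{b_0,\dots,b_{p-1}\}$ is a complete residue system mod $p$ and, for every $s\geq1$ and $0\leq k<p^s$, the set $\{b_{k+\ell p^s}\}_{1\leq \ell\leq p-1}$ is a complete set of nonzero residues mod $p$. The key simplification for uniformly differentiable modulo $p$ functions is Equation (\ref{equsns3}), which gives $b_{k+\ell p^s}\equiv \ell\,\partial_1 f(\bar k)\equiv \ell\,b_{\bar k+p}\pmod p$ where $\bar k\equiv k\pmod p$. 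Consequently, for any fixed $s\geq1$ and $k$, the set $\{b_{k+\ell p^s}\bmod p\}$ is $\{\ell\,b_{\bar k+p}\bmod p : 1\leq\ell\leq p-1\}$, which is the full set of nonzero residues precisely when $b_{\bar k+p}\not\equiv0\pmod p$. So all the higher-level conditions collapse to condition (ii): $b_{i+p}\not\equiv0\pmod p$ for $0\leq i\leq p-1$. The first condition $\{b_0,\dots,b_{p-1}\}$ complete mod $p$ is exactly condition (i) once one recalls from (\ref{vancf}) that $b_i=B_i=f(i)$ for $0\leq i\leq p-1$.

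Next I would supply the ``Equivalently'' reformulation. Using (\ref{vancf}), $B_{i+p}=f(i+p)-f(i)$ for $0\leq i\leq p-1$, and since $b_{i+p}=p^{-1}B_{i+p}$ (here $\lfloor\log_p(i+p)\rfloor=1$), we get $p\,b_{i+p}=f(i+p)-f(i)$. Expanding $f$ in its Mahler series (\ref{Mex0}) with $a_n=p^{\lfloor\log_p n\rfloor}c_n$, one has $f(i+p)-f(i)=\sum_{n\geq0}a_n\left(\binom{i+p}{n}-\binom{i}{n}\right)$. Splitting the sum at $n=p$: for $0\leq n\leq p-1$ the quantity $\binom{i+p}{n}-\binom{i}{n}$ is divisible by $p$ (indeed by Lucas it is $\equiv0\pmod p$), so writing $\lambda_n^i=\frac1p(\binom{i+p}{n}-\binom{i}{n})$ and noting $a_n=c_n$ there (since $\lfloor\log_p n\rfloor=0$ for $n<p$), that part contributes $p\sum_{n=0}^{p-1}\lambda_n^i c_n$; for $n\geq p$ the coefficient $a_n=p^{\lfloor\log_p n\rfloor}c_n$ carries an extra factor of $p$, so modulo $p^2$ only $n$ with $\lfloor\log_p n\rfloor=1$, i.e. $p\leq n\leq p^2-1$, survive, and among these only terms with $\binom{i+p}{n}-\binom{i}{n}\not\equiv0\pmod p$ matter, which by Lucas forces $n=m+p$ with $0\leq m\leq i$, giving $p\sum_{n=0}^{i}\binom{i}{n}c_{n+p}$. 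Dividing by $p$ yields $b_{i+p}\equiv\sum_{n=0}^{p-1}\lambda_n^i c_n+\sum_{n=0}^{i}\binom{i}{n}c_{n+p}\pmod p$, so condition (ii) in van der Put form is equivalent to the displayed Mahler-coefficient inequality.

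I expect the main obstacle to be the bookkeeping in the second paragraph: carefully justifying which Mahler terms survive modulo $p^2$ when computing $f(i+p)-f(i)$, and confirming via Lucas's theorem that $\binom{i+p}{n}-\binom{i}{n}\equiv0\pmod p$ for $n<p$ (so that $\lambda_n^i$ is genuinely a $p$-adic integer) while for $p\leq n<p^2$ the difference is nonzero mod $p$ exactly when $n=m+p$ with $0\leq m\leq i$ and then equals $\binom{i}{m}$. These are elementary but must be stated precisely. Everything else is a direct application of Theorem \ref{MPvan}, the identity (\ref{equsns3}), and the van der Put recovery formula (\ref{vancf}); the equivalence of (i) with ``$\{f(0),\dots,f(p-1)\}$ complete mod $p$'' is immediate, and the collapse of the infinitely many level-$s$ conditions to the single finite condition (ii) is the conceptual heart of the statement and follows cleanly from the periodicity (\ref{equiv2}) of $\partial_1 f$.
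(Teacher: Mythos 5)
Your proposal is correct and follows essentially the same route as the paper: it derives the criterion from the Khrennikov--Yurova theorem (Theorem \ref{MPvan}), collapses the level-$s$ conditions via (\ref{ply2})--(\ref{equivred}), and obtains the Mahler reformulation by expanding $B_{i+p}=f(i+p)-f(i)=\sum_n\bigl(\binom{i+p}{n}-\binom{i}{n}\bigr)a_n$ and applying Lucas's congruence, which is exactly the paper's computation via (\ref{Bmform}) and (\ref{Amnfor}). The only cosmetic difference is that you expand $f(i+p)-f(i)$ directly from (\ref{vancf}) rather than quoting the transition coefficients $A_{i+p}(n)$, but these are the same identity.
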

\begin{proof}
See \cite[Propostion 6.5]{J21b} for a direct proof.
Alternatively, this follows from Proposition \ref{MPvan} and the relation in (\ref{equivred}),  along with the formula for $B_{i+p}$ in (\ref{Bmform}) and (\ref{Amnfor}).
Indeed,

\begin{eqnarray*}
  B_{i+p}=\sum_{n=0}^{i+p}A_{i+p}(n)a_n  &=& \sum_{n=0}^{i+p}\left( \binom{i+p}{n}-\binom{i}{n} \right) a_n \\
  &=& \sum_{n=0}^{p-1}\left( \binom{i+p}{n}-\binom{i}{n} \right) c_n
+\sum_{n=p}^{i+p}\binom{i+p}{n} pc_n.
\end{eqnarray*}
On dividing by $p$, the Lucas congruence gives the result.

\end{proof}
We remark here that an alternative equivalent expression for $\l_n^i$  in \cite{Me20b} is given by
$$ p\l_n^i =\sum_{n+j \geq p, 1 \leq j \leq p-1} \binom{i}{n+j-p} \binom{p}{j}.$$

\begin{remark}
{\rm In contrast to the family of $B$-class functions on $\Z_p$ that are everywhere differentiable in \cite{An2},
uniformly differentiable modulo $p$ functions are not necessarily differentiable but satisfy  the same criterion for measure-preservation in \cite{J21}. }
\end{remark}

\begin{remark}
{\rm
Proposition  \ref{mpdiff} can be compared with Yurov's criterion in \cite[Theorem 1]{Yur98} for measure-preservation of a uniformly differentiable modulo $p$ function $f$ on $\Zp.$ Note that the function under consideration is not necessarily 1-Lipschitz, and that additionally, if $f$ is asymptotically 1-Lipschitz, i.e., a function satisfying the property that $a\equiv b \pmod{p^n}$ implies that  $f(a)\equiv f(b)\pmod{p^n}$ for any $n\geq N$ with some integer $N>0,$ then two notions of ``$f$ is measure-preserving '' and ``$f$ is asymptotically measure-preserving '' coincide, where the latter notion is described as the existence of a positive integer $B$ such that the reduced function $f_{/n}$ is bijective modulo $p^n$ for any $n\geq B.$ See Section 2 of \cite{Yur98} for details.}
\end{remark}

\subsection{Memi$\acute{c}$ et al.'s criterion for egrodicity revisited}

Memi$\acute{c}$ et al. \cite{MM} provided necessary and sufficient conditions for a uniformly differentiable modulo $p$, 1-Lipschitz function on $\Zp$ to be ergodic in terms of
the van der Put coefficients. We revisit their criterion for ergodicity and give an complete description of ergodicity for a uniformly differentiable modulo $p$, 1-Lipschitz function on $\Zp$ in terms of the coefficients of Mahler and van der Put.
We first recall the ergodicity criterion of Memi$\acute{c}$ et al. \cite{MM}, which can be rephrased in a slightly different form.

\begin{theorem}[see \cite{MM}]\label{Mcri}
Let $f$ be  a uniformly differentiable modulo $p$, 1-Lipschitz function on $\Zp$.
Then $f$ is ergodic if and only if the following conditions are satisfied:

\begin{enumerate}
    \item[(1)]  $f$ is transitive modulo $p;$

     \item[(2)]  $f$ is measure-preserving;

     \item[(3)]  For all integer $s>0$, $f^{p^s}(0) \not \equiv 0 \pmod{p^{s+1}};$

     \item[(4)]  For all integer $s>0$,
     $\frac {\prod_{j=0}^{p^s-1}B_{j+p^s}}{p^{sp^s}} \equiv 1 \pmod{p}.$

\end{enumerate}
\end{theorem}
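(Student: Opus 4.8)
\emph{Proof strategy.} The plan is to reduce the criterion to the level-by-level transitivity characterization of ergodicity in Proposition \ref{ergeq}: $f$ is ergodic if and only if the reduced map $f_{/n}$ is transitive on $\Zp/p^{n}\Zp$ for every $n\geq1$. Two of the four conditions are then immediate necessary ones. Transitivity of $f_{/1}$ is exactly condition (1), and since each $f_{/n}$ must be a bijection, Proposition \ref{mpeq} forces $f$ to be measure-preserving, which is condition (2). It therefore remains to show, assuming (1) and (2), that $f_{/n}$ is transitive for every $n$ if and only if (3) and (4) hold; I would prove this by induction on $n$, the base case $n=1$ being (1).

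For the inductive step, assume $f_{/n}$ is a single cycle of length $p^{n}$, so that $f_{/n}^{\,p^{n}}=\mathrm{id}$ and hence $f^{p^{n}}$ maps the fibre $F_0:=\{\,jp^{n}\bmod p^{n+1}:0\leq j<p\,\}$ over $0\in\Zp/p^{n}\Zp$ into itself. Using the equivariance of the projection $\Zp/p^{n+1}\Zp\to\Zp/p^{n}\Zp$ together with measure-preservation, $f_{/n+1}$ is a bijection all of whose orbits have length $p^{n}$ or $p^{n+1}$, and it is transitive precisely when the permutation that $f^{p^{n}}$ induces on the $p$-element set $F_0$ is a $p$-cycle. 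So everything comes down to identifying that permutation.

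The key lemma I would establish is that $f^{p^{n}}$ is again uniformly differentiable modulo $p$ with $N_{1}(f^{p^{n}})=1$ and obeys the modulo-$p$ chain rule $\partial_{1}(f^{p^{n}})(0)\equiv\prod_{i=0}^{p^{n}-1}\partial_{1}f(f^{i}(0))\pmod p$ (cf. \cite{AK}). Writing $f^{p^{n}}(0)\equiv\varepsilon_{n}p^{n}\pmod{p^{n+1}}$, the defining congruence of $f^{p^{n}}$ at scale $p^{n}$ then gives $f^{p^{n}}(jp^{n})\equiv f^{p^{n}}(0)+jp^{n}\partial_{1}(f^{p^{n}})(0)\pmod{p^{n+1}}$, that is, on $F_0$ the induced map is the affine self-map $j\mapsto Dj+\varepsilon_{n}\pmod p$, where $D:=\prod_{r=0}^{p-1}\partial_{1}f(r)\bmod p$: indeed condition (1) makes $f^{i}(0)\bmod p$ run through every residue exactly $p^{n-1}$ times, so with the periodicity (\ref{equiv2}) of $\partial_{1}f$ and Fermat the chain-rule product collapses to $D^{p^{n-1}}\equiv D\pmod p$. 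Now an affine self-map $j\mapsto Dj+\varepsilon_{n}$ of $\Z/p\Z$ is a $p$-cycle if and only if $D\equiv1$ and $\varepsilon_{n}\not\equiv0\pmod p$. The first of these I would match with condition (4): from (\ref{ply3}) one has $B_{j+p^{s}}\equiv p^{s}\partial_{1}f(j)\pmod{p^{s+1}}$, whence $\prod_{j=0}^{p^{s}-1}B_{j+p^{s}}\big/p^{sp^{s}}\equiv\prod_{j=0}^{p^{s}-1}\partial_{1}f(j)\equiv D\pmod p$, again using (\ref{equiv2}) and Fermat; in particular (4) says the same thing for every $s$, namely $D\equiv1\pmod p$. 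The second, $\varepsilon_{n}\not\equiv0$, is precisely condition (3) at $s=n$. Running the induction then yields the ``if'' direction, and reading the same chain of equivalences backwards (ergodicity forces every $f_{/n}$ transitive, hence every induced affine map on $F_0$ is a $p$-cycle) yields ``only if''.

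I expect the main obstacle to be the key lemma, specifically the verification that $f^{p^{n}}$ is uniformly differentiable modulo $p$ with $N_{1}=1$ and obeys the modulo-$p$ chain rule; this is a routine but delicate composition argument (expand $f\circ g$ via the defining congruences of $f$ and $g$ and track the first ``derivative''), and it is what legitimizes using the scale-$p^{n}$ expansion of $f^{p^{n}}$. A secondary point needing care is the bookkeeping that reduces transitivity of $f_{/n+1}$ to the action of $f^{p^{n}}$ on the single fibre $F_0$; this follows from equivariance of the projection but should be written out, in particular the assertion that $f^{p^{n}}$ restricted to $F_0$ being a $p$-cycle is equivalent to $f_{/n+1}$ being a single cycle.
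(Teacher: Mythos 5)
Your argument is correct and follows essentially the route the paper intends: the paper gives no proof of Theorem \ref{Mcri} itself (it is quoted from \cite{MM}), but the remarks immediately following it identify exactly your key step — given transitivity modulo $p^{s}$, transitivity modulo $p^{s+1}$ is governed by the induced affine map on the fibre over $0$, with condition (4) collapsing via (\ref{ply3}), (\ref{equiv2}) and Fermat to the $s$-independent congruence $\prod_{j=0}^{p-1}b_{j+p}\equiv\prod_{r=0}^{p-1}\partial_{1}f(r)\equiv 1\pmod p$ and condition (3) supplying the nonzero translation part. Your reconstruction, including the modulo-$p$ chain rule for $\partial_{1}(f^{p^{n}})$ and the reduction of transitivity of $f_{/n+1}$ to a $p$-cycle condition on the fibre, is the standard argument and matches Lemma \ref{A-Lemma} and the discussion around (\ref{A-remark})--(\ref{eqcon4}).
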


To simplify Theorem \ref{Mcri}, we recall Anashin's ergodicity criterion for a uniformly differentiable modulo $p^2$, 1-Lipschitz function on $\Zp.$

\begin{theorem}(see \cite[Theorem 4.55]{AK})\label{Audmp2}
Let $f$ be a uniformly differentiable modulo $p^2$, 1-Lipschitz function on $\Zp.$
Then $f$ is ergodic if and only if
$f$ is transitive modulo $p^N$ for some positive integer $N$ or equivalently, for every $n\geq N,$
where $N=N(p)=N_2(f)+2$ for $p=2,$ $N=N_2(f)+1$ for $p$ an odd prime.
\end{theorem}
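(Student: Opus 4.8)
The plan is to establish the equivalence of three statements: $f$ is ergodic; $f$ is transitive modulo $p^{N(p)}$; $f$ is transitive modulo $p^n$ for every $n\ge N(p)$. One direction is free: by Proposition~\ref{ergeq}, ergodicity of $f$ is the same as transitivity of $f_{/n}$ for all $n\ge1$, which in particular gives transitivity modulo $p^{N(p)}$ and modulo every higher power; and transitivity modulo $p^{N(p)}$ descends at once to transitivity modulo $p^m$ for every $m\le N(p)$, since a quotient of a single cycle is a single cycle. So the substance is the converse, and I would deduce it from Memi\'{c} et al.'s Theorem~\ref{Mcri} by verifying its four conditions from the hypothesis ``$f_{/N(p)}$ is transitive''. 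Conditions (1) and (2) are immediate, as $f_{/N(p)}$ transitive forces $f_{/1}$ transitive and $f_{/N(p)}$ bijective, whence $f$ is measure-preserving by Proposition~\ref{mpeq}.

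For conditions (3) and (4) the plan rests on the classical fibre analysis of a $1$-Lipschitz map. For $s\ge1$ with $f_{/s}$ transitive, put $u_s:=p^{-s}f^{p^s}(0)\bmod p$ (well defined since $f^{p^s}(0)\equiv0\pmod{p^s}$) and $P_s:=\prod_{i=0}^{p^s-1}\partial_1 f(f^i(0))\bmod p$. Iterating (\ref{diffmodp1}) along the orbit of $0$ shows that, on the fibre of $0$ over $\Z/p^s\Z$, the map $f^{p^s}$ acts as the affine permutation $j\mapsto u_s+jP_s$ of $\Z/p\Z$; hence $f_{/(s+1)}$ is transitive iff this permutation is a $p$-cycle, i.e.\ iff $P_s=1$ and $u_s\ne0$. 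Thus conditions (3) and (4) of Theorem~\ref{Mcri} are precisely: $P_s=1$ and $u_s\ne0$ for all $s\ge1$. The multiplier is easy: by (\ref{equsns3}) the residue $\partial_1 f(b)\bmod p$ depends only on $b\bmod p$, and along a full $p^s$-cycle each class modulo $p$ occurs $p^{s-1}$ times, so $P_s=\bigl(\prod_{r=0}^{p-1}\partial_1 f(r)\bigr)^{p^{s-1}}$; since $p^{s-1}\equiv1\pmod{p-1}$ and, by measure-preservation, each $\partial_1 f(r)$ is a unit, $P_s$ equals the fixed unit $\prod_{r=0}^{p-1}\partial_1 f(r)\bmod p$, which is forced to be $1$ already by transitivity modulo $p^2$ (available since $N(p)\ge2$); for $p=2$ condition (4) is in fact automatic, every $\partial_1 f(r)$ being odd.

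It remains to propagate $u_s\ne0$ to all $s$, and this is the step that genuinely uses uniform differentiability modulo $p^2$ rather than only modulo $p$; I expect it to be the main obstacle. Refining the fibre computation one level — analysing $f^{p^s}$ modulo $p^{s+2}$ by iterating (\ref{diffmodpk}) with $k=2$ along the orbit of $0$, which is legitimate once $s\ge N_2(f)$ — one is led to a recursion relating $u_{s+1}$ to $u_s$ through the second-order multiplier $Q_s:=\prod_{i=0}^{p^s-1}\partial_2 f(f^i(0))\bmod p^2$. For odd $p$ it collapses cleanly: since $Q_s\equiv P_s=1\pmod p$, writing $Q_s=1+pw$ and using $(1+pw)^p\equiv1+p^2w\pmod{p^3}$ gives $u_{s+1}\equiv u_s\pmod p$ for all $s\ge N_2(f)$, so the seed $u_{N_2(f)}\ne0$ — supplied by transitivity modulo $p^{N_2(f)+1}=p^{N(p)}$ via the fibre criterion — propagates to all $s\ge N_2(f)$, the cases $s<N_2(f)$ being covered by transitivity modulo $p^{N(p)}$. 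For $p=2$ the same scheme yields $u_{s+1}\equiv u_s\cdot\tfrac{1+Q_s}{2}\pmod2$, so one must also control $Q_s\bmod4$; here the extra input is that (\ref{diffmodpk}) forces $\partial_2 f(b+2^s)\equiv\partial_2 f(b)\pmod4$ for $s\ge N_2(f)$, whence $Q_{s+1}\equiv\prod_b\partial_2 f(b)^2\equiv1\pmod4$ (odd squares are $\equiv1\bmod8$) for $s\ge N_2(f)$, i.e.\ $\tfrac{1+Q_s}{2}\equiv1\pmod2$ for $s\ge N_2(f)+1$; this is why the seed needed is $u_{N_2(f)+1}\ne0$, available from transitivity modulo $2^{N_2(f)+2}=2^{N(2)}$, which accounts for the extra level when $p=2$. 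With all four conditions of Theorem~\ref{Mcri} verified, $f$ is ergodic, and the three-way equivalence follows. The delicate points to nail down are the correct power of $p$ in the derivative-of-iterate recursion and, for $p=2$, the precise two-step propagation just sketched.
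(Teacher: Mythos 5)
The paper does not prove Theorem \ref{Audmp2} at all: it is quoted from Anashin--Khrennikov \cite[Theorem 4.55]{AK}, so there is no internal proof to compare yours against. Your reconstruction is essentially Anashin's original argument (the one behind \cite[Lemma 4.56]{AK}, i.e.\ Lemma \ref{A-Lemma}): the affine fibre map $j\mapsto u_s+jP_s$, the identification of conditions (3) and (4) of Theorem \ref{Mcri} with $u_s\neq 0$ and $P_s=1$, and the one-level-deeper fibre analysis with $\partial_2 f$ to propagate $u_s\neq 0$, the extra level at $p=2$ coming from needing $Q_s\equiv 1\pmod 4$, which you correctly obtain from the periodicity of $\partial_2 f$ modulo $p^2$ with period $p^{N_2(f)}$ (this is \cite[Proposition 3.32]{AK} for $k=2$). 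The computations you flag as delicate are right as sketched: iterating the modulo-$p^{s+2}$ expansion gives $u_{s+1}\equiv u_s\cdot\frac{1}{p}\bigl(1+Q_s+\cdots+Q_s^{p-1}\bigr)\pmod p$, and with $Q_s=1+pw$ the geometric sum is $\equiv p\pmod{p^2}$ for odd $p$, while for $p=2$ it equals $2\cdot\frac{1+Q_s}{2}$, exactly your two-step bookkeeping.

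One step is justified incorrectly: you call condition (2) of Theorem \ref{Mcri} (measure-preservation) ``immediate'' from transitivity modulo $p^{N(p)}$ via Proposition \ref{mpeq}, but Proposition \ref{mpeq} requires bijectivity of $f_{/n}$ for \emph{all} $n$, whereas the hypothesis only yields it for $n\le N(p)$. This is repairable: bijectivity modulo $p^2$ already forces each $\partial_1 f(r)$ to be a unit, and the same fibre argument then lifts bijectivity through every level; alternatively, once your induction has produced transitivity modulo $p^n$ for every $n$, measure-preservation follows from Proposition \ref{mpeq}(3) and ergodicity directly from Proposition \ref{ergeq}(3), making the detour through Theorem \ref{Mcri} unnecessary. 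Relatedly, the orbit-covering identities you use for $P_s$ and $Q_s$ presuppose transitivity modulo $p^s$, so they must live inside the level-by-level induction (as your sketch in fact arranges) rather than being asserted for all $s$ at the outset; for $P_s$ one can avoid this entirely, since condition (4) of Theorem \ref{Mcri} is a product over all residues $j<p^s$ and the mod-$p$ periodicity of $\partial_1 f$ alone gives $P_s\equiv\bigl(\prod_{r=0}^{p-1}\partial_1 f(r)\bigr)^{p^{s-1}}\pmod p$.
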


The following lemma in \cite{AK}  is crucial in proving Theorem \ref{Audmp2}.

\begin{lemma}(see \cite[Lemma 4.56]{AK}) \label{A-Lemma}
Let $f$ be a uniformly differentiable modulo $p$, 1-Lipschitz function on $\Zp$ and
let $f$ be transitive modulo $p^{s}$ for some $s\geq N_1(f)+1.$
Then $f$ induces either a transitive function modulo $p^{s+1}$ or a permutation that is a product of $p$ pairwise disjoint cycles of length $p^s$ each.
\end{lemma}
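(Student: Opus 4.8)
The plan is to prove Lemma~\ref{A-Lemma} by a direct cycle-structure analysis of the permutation $f_{/(s+1)}$ on $\Z_p/p^{s+1}\Z_p$, using the hypothesis that $f_{/s}$ is a single $p^s$-cycle on $\Z_p/p^s\Z_p$ together with the uniform differentiability modulo $p$ (which controls how the $s$-th level refines to the $(s+1)$-st). First I would note that since $s\geq N_1(f)+1$, the congruence (\ref{diffmodp1}) applies with exponent $s$, so for any $u$ and any $h$ we have $f(u+p^s h)\equiv f(u)+p^s h\,\partial_1 f(u)\pmod{p^{s+1}}$; this says precisely that over each residue class mod $p^s$, the reduced map at level $s+1$ acts on the $p$ points lying above it by an affine map $t\mapsto f(u)+p^s(\,\ast+t\,\partial_1 f(u))$ in the fiber coordinate $t\in\Z/p\Z$. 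The key structural input is that transitivity of $f_{/s}$ means the $p^s$ residue classes mod $p^s$ are permuted in one long cycle; lifting the $p^s$-th power $g:=f_{/(s+1)}^{p^s}$ therefore fixes every residue class mod $p^s$ setwise and acts within each fiber $\Z/p\Z$ as the composition of the $p^s$ affine maps encountered around the cycle.

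Next I would compute that composition. Going once around the length-$p^s$ cycle at level $s$, starting from a point $x_0$, the fiber map $g$ restricted to the fiber over $x_0$ is the affine map $t\mapsto \mu t + \beta$ where $\mu\equiv\prod_{j=0}^{p^s-1}\partial_1 f\!\left(f^{\,j}(x_0)\right)\pmod p$ and $\beta\in\Z/p\Z$ is the accumulated additive shift. Because $f_{/s}$ runs through all residues mod $p^s$, and $\partial_1 f$ only depends on the residue mod $p$ (by (\ref{equiv2})), the multiplier $\mu$ is a product over all residues mod $p^s$ of $\partial_1 f$-values, hence is the same (up to relabeling, indeed literally the same product) for every starting fiber; write $\mu$ for this common value. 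There are then exactly two cases. If $\mu\not\equiv 1\pmod p$, then each affine fiber map $t\mapsto\mu t+\beta$ has a unique fixed point in $\Z/p\Z$ and $g$ is a product of $p^s$ disjoint transpositions—wait, more precisely its order divides $p-1$, so $g$ has fixed points in every fiber; then $f_{/(s+1)}$ has an orbit of length $p^s$ but not $p^{s+1}$, contradicting... no: I must instead invoke that $f_{/(s+1)}$ permutes the whole space and its cycle containing $x_0$ has length a multiple of $p^s$; the length is $p^s\cdot(\text{order of the fiber map }g)$. When $\mu\not\equiv1$ the fiber map has a fixed point, so its order as a permutation of $\Z/p\Z$ is $1$ on that point but the cycle through $x_0$ need not pass through that point—so I would instead argue that $g$ restricted to the $x_0$-fiber is a $p$-cycle (order $p$) iff $\mu\equiv 1$ and $\beta\not\equiv 0$, is the identity iff $\mu\equiv1,\beta\equiv0$, and has order dividing $p-1$ iff $\mu\not\equiv1$. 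Correspondingly $f_{/(s+1)}$ is: transitive (one cycle of length $p^{s+1}$) when $\mu\equiv1,\beta\not\equiv0$; a product of $p$ disjoint $p^s$-cycles when $\mu\equiv1,\beta\equiv0$; and in the case $\mu\not\equiv1$ one checks $\beta\equiv0$ is forced here (since at level $s$ the map is transitive, a standard averaging argument over the cycle shows the total additive shift vanishes whenever the total multiplier is $\not\equiv1$), so $g$ is a product of nontrivial permutations of order dividing $p-1$ in each fiber—but then some cycle of $f_{/(s+1)}$ has length $p^s\cdot d$ with $1<d\mid p-1$, which would violate the hypothesis that $f_{/s}$, the reduction of $f_{/(s+1)}$ mod $p^s$, is a single $p^s$-cycle only if... actually this case is genuinely excluded because $f$ is $1$-Lipschitz, forcing $f_{/(s+1)}$ to be an isometry whose reduction mod $p^s$ is $f_{/s}$; one shows directly that a $1$-Lipschitz lift of a $p^s$-cycle cannot introduce multiplier $\not\equiv1$ in the fibers. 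This last point is where I expect the real work to sit, and I would handle it cleanly by citing the isometry characterization in Proposition~\ref{mpeq} and the structure $f=c+x+p g(x)$-type normal form implicit in the measure-preserving case.

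Thus the dichotomy in the lemma—$f_{/(s+1)}$ is either transitive or a product of $p$ disjoint $p^s$-cycles—falls out of the two surviving cases $\mu\equiv1,\beta\not\equiv0$ and $\mu\equiv1,\beta\equiv0$. I would close the argument by remarking that since $f_{/(s+1)}$ has $p^{s+1}$ elements and its reduction mod $p^s$ is a single cycle, its own cycle type must be a single $p^{s+1}$-cycle or $p$ copies of a $p^s$-cycle, and the fiber computation decides which.

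The main obstacle, as flagged above, is ruling out the "multiplier $\not\equiv 1$" case cleanly: one needs that a $1$-Lipschitz (hence isometric, once measure-preserving) lift of a transitive $f_{/s}$ has all fiber multipliers congruent to $1$ modulo $p$, or more robustly that the accumulated fiber map around the level-$s$ cycle is always a pure translation. I would derive this from (\ref{diffmodp1}) by noting that iterating once around the cycle the composite of the maps $t\mapsto t\,\partial_1 f(\cdot) + (\cdot)$ must, by $1$-Lipschitzness of $f^{p^s}$ together with $f^{p^s}_{/s}=\mathrm{id}$, be a bijection of $\Z/p\Z$ of a form forced to be affine with unit multiplier; the cleanest route is to observe $\prod_j \partial_1 f(f^j(x_0)) \equiv \prod_{i=0}^{p^s-1}\partial_1 f(i) \pmod p$ and that this product, by the measure-preserving constraint that $\{b_{k+\ell p^s}\}$ hit all nonzero residues (Theorem~\ref{MPvan}), is $\equiv 1$; once the multiplier is $1$ the lemma is immediate. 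Everything else is bookkeeping with (\ref{ply2}), (\ref{ply3}) and Lucas-type reductions already assembled in Section~2.
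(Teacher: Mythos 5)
Your fiber analysis is set up along the standard lines: writing $g=f^{p^s}$, the hypothesis that $f_{/s}$ is a single $p^s$-cycle together with (\ref{diffmodp1}) (valid at level $s\geq N_1(f)$) shows that $g$ fixes each fiber of $\Z_p/p^{s+1}\Z_p\to\Z_p/p^s\Z_p$ and acts there as an affine map $t\mapsto \mu t+\beta$ with common multiplier $\mu\equiv\prod_{i=0}^{p^s-1}\partial_1f(i)\equiv\prod_{i=0}^{p-1}\partial_1f(i)\pmod p$ (by (\ref{equiv2}) and Fermat), and the lemma is exactly the assertion that this affine map is a translation. The genuine gap is the exclusion of $\mu\not\equiv1\pmod p$, and both routes you propose for it fail. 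Being $1$-Lipschitz, or even an isometry, does not force the multiplier to be $1$: for instance $f(x)=2x+1$ on $\Z_5$ is a measure-preserving isometry with $\partial_1f\equiv 2$ everywhere. And condition (ii) of Theorem \ref{MPvan} only says that each set $\{b_{k+\ell p^s}\}_{1\leq\ell\leq p-1}=\{\ell\,\partial_1f(k)\}_{\ell}$ exhausts the nonzero residues, which is equivalent to each individual factor $\partial_1f(k)$ being a unit; it says nothing about their product being $\equiv 1\pmod p$. So neither ``$1$-Lipschitz lift'' nor ``measure-preservation'' closes the case, and your argument as written does not rule out a cycle type with a cycle of length $dp^s$ for some $1<d\mid p-1$.

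The correct way to kill $\mu\not\equiv1$ is to use the transitivity hypothesis itself one level down --- and this is where the assumption $s\geq N_1(f)+1$ (rather than merely $s\geq N_1(f)$) enters; your proof never uses this stronger hypothesis, which is a symptom of the missing step. Since $s-1\geq N_1(f)$, the same fiber computation applies to the transition from level $s-1$ to level $s$: the return map of $f^{p^{s-1}}$ on each fiber of $\Z_p/p^{s}\Z_p\to\Z_p/p^{s-1}\Z_p$ is $t\mapsto ct+\theta$ with $c\equiv\prod_{i=0}^{p-1}\partial_1f(i)\equiv\mu\pmod p$. Because $f$ \emph{is} transitive modulo $p^{s}$ by hypothesis, that return map must be a single $p$-cycle of $\Z/p\Z$, and an affine map with multiplier $c\not\equiv 1$ is conjugate to $t\mapsto ct$ and has order dividing $p-1$; hence $c\equiv\mu\equiv1$. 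With $\mu\equiv 1$ your two surviving cases ($\beta\not\equiv0$ giving transitivity modulo $p^{s+1}$, $\beta\equiv0$ giving $p$ disjoint $p^s$-cycles) finish the proof; this multiplier identity is precisely the content of the remark (\ref{A-remark}) following the lemma. (Note the paper itself does not reprove this statement but cites \cite[Lemma 4.56]{AK}.) A smaller omission: you should also verify that $f_{/(s+1)}$ is a permutation at all, which follows because bijectivity modulo $p^s$ together with the level-$(s-1)$ differentiability forces every $\partial_1f(u)$ to be a $p$-adic unit.
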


Anashin asked of an analogue of Theorem \ref{Audmp2} for uniformly differentiable modulo $p$, 1-Lipschitz functions on $\Zp$. After proving Lemma \ref {A-Lemma}, he made an important remark that
if $f$ is transitive modulo $p^{N_1(f)+1},$ then
the following congruence holds for every $x \in \Zp;$
\begin{equation}\label{A-remark}
  \prod_{j=0}^{p^{N_1(f)}-1}\partial_{1}(f)(f^j(x)) \equiv 1 \pmod{p}.
\end{equation}
Because of the property in (\ref{equsns3}),
the congruence in (\ref{A-remark})
is equivalent to the following congruence concerning the normalized van der Put coefficients :
\begin{equation}\label{eqcon4}
\prod_{j=0}^{p-1}b_{j+p}\equiv 1 \pmod{p},
\end{equation}
because $f$ is transitive modulo $p^{N_1(f)}.$
In fact, from (\ref{A-remark}) for $x=0,$
we have
$$ \prod_{j=0}^{p^{N_1(f)}-1}\partial_{1}(f^j(0))
\equiv
\prod_{j=0}^{p^{N_1(f)}-1}\partial_{1}(f)(j)
\equiv \left(\prod_{j=0}^{p-1}b_{j+p}\right)^{p^{N_1(f)-1}} \pmod{p}.$$

Let us make some observations on Theorem \ref{Mcri}.
A key idea for the proof of Theorem \ref{Mcri} is to prove that under the assumption of the transitivity of $f$ modulo $p^{s}$ for some integer $s>0$ or condition (3), $f$ is transitive modulo $p^{s+1}$ if and only if condition (4) holds for all integer $s>0$. Using the property that $B_{j+p^s}\equiv p^{s-1}B_{\bar{j}+p}$ from (\ref{ply2}), where $ \bar{j}$ is the reduction of $j$ modulo $p,$
it is crucial to see that
condition (4) of Theorem \ref{Mcri} is equivalent to the congruence in (\ref{eqcon4}).

Indeed, we calculate
$$(\prod_{j=0}^{p^s-1}B_{j+p^s})/p^{sp^{s}} \equiv (\prod_{j=0}^{p-1} p^{s}b_{j+p})^{p^{s}} /p^{sp^{s}}
=\left(\prod_{j=0}^{p-1} b_{j+p}\right)^{p^{s-1}} \pmod{p},$$
which leads to the equivalence of condition (4) of Theorem \ref{Mcri} and (\ref{eqcon4}).

It is important to observe that the congruence in (\ref{eqcon4}) does not depend on $s,$
and that from the criterion of Khrennikov  and  Yurova \cite{KY},  this congruence as well as condition (1) are equivalent to condition (2) of Theorem \ref{Mcri}, which can be redundant.
Furthermore, by Anashin's remark, condition (4) of Theorem \ref{Mcri} can be dropped if $f$ is transitive modulo $p^{N_1(f)+1}.$
In terms of these observations, it is natural to ask and determine the minimal level $s$ for which $f$ is transitive modulo $p^s$ so that $f$ is transitive for all higher levels, resulting in  ergodicity of $f.$

We invoke the following examples from \cite[Remark]{DP} of  uniformly differentiable modulo $p^2$, 1-Lipschitz functions with a special property.

\begin{example}\label{counter}
{\rm Let $f(x)=1+3x+2x^3 \in \Z_{2}[x].$
Then by Taylor's theorem, it is easy to show
that it is a uniformly differentiable modulo $p$, 1-Lipschitz function, with
$N_1(f)=1.$ A direct computation shows that
$f$ is transitive modulo $4$ whose a full cycle is (0,1,2,3) but not transitive modulo $8$.

Let $f(x)=1+4x+4x^3 +2x^5\in \Z_{3}[x].$
Then by Taylor's theorem, it is easy to show
that it is a uniformly differentiable modulo $p$, 1-Lipschitz function, with
$N_1(f)=1.$ It is shown by a direct computation that
$f$ is transitive modulo $9$ whose a full cycle is (0,1,2,6,7,5,3,4,8) but not transitive modulo $27$.
For a prime $p\geq 5,$ the polynomial $x^p+1$ can be shown to be  a uniformly differentiable modulo $p$, 1-Lipschitz function on $\Zp$ and transitive modulo $p$ but not modulo $p^2.$}
\end{example}

In light of Example \ref{counter} and Lemma \ref{A-Lemma}, it seems reasonable to take $\mu = \mu(p)= N_1(f) +2$ if  $p$ is 2 or 3 and $N_1(f) +1$ otherwise in Theorem \ref{mainR}.

The answer to the above question is an analogue of Theorem \ref{Audmp2} for uniformly differentiable modulo $p$, 1-Lipschitz functions on $\Zp.$

\begin{theorem}\label{mainR}
Let $f$ be a uniformly differentiable modulo $p$, 1-Lipschitz function, of $N_1(f)=1$ on $\Zp.$
Then $f$ is ergodic if and only if
 $f$ is transitive modulo $p^{\mu},$
  where $\mu:=\mu(p) =3$ for $p=2$ or 3 and $ \mu =2 $ for primes $p\geq 5.$
\end{theorem}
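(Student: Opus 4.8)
The plan is to reduce the ergodicity of $f$ to transitivity modulo $p^{\mu}$ using Theorem~\ref{Mcri} together with Anashin's remark~\eqref{A-remark}–\eqref{eqcon4} and Lemma~\ref{A-Lemma}. One direction is immediate: if $f$ is ergodic, then by Proposition~\ref{ergeq}(3) the reduced map $f_{/\mu}$ is transitive on $\Zp/p^{\mu}\Zp$, so $f$ is transitive modulo $p^{\mu}$. The substance is the converse. Assume $f$ is transitive modulo $p^{\mu}$, where $\mu = N_1(f)+2 = 3$ for $p\in\{2,3\}$ and $\mu = N_1(f)+1 = 2$ for $p\geq 5$. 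I want to verify all four conditions of Theorem~\ref{Mcri} and then, more importantly, propagate transitivity to every level $s\geq\mu$.

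First I would clear the easy conditions. Transitivity modulo $p^{\mu}$ forces transitivity modulo $p$ (condition (1)) and modulo $p^{\mu-1}\geq p$; since a transitive reduced map is in particular bijective, Proposition~\ref{mpeq} gives measure-preservation (condition (2)). For condition (3): transitivity modulo $p^{\mu}$ makes $\{0,f(0),\dots,f^{p^{\mu}-1}(0)\}$ a full cycle, so in particular $f^{p^s}(0)\not\equiv 0\pmod{p^{s+1}}$ for $1\le s\le \mu-1$; for larger $s$ this will follow once transitivity at level $s+1$ is established. Condition (4), via the computation displayed in the excerpt, is equivalent to the single congruence $\prod_{j=0}^{p-1} b_{j+p}\equiv 1\pmod p$, which does not depend on $s$; by Anashin's remark~\eqref{A-remark} this congruence holds as soon as $f$ is transitive modulo $p^{N_1(f)+1}$, and $\mu\geq N_1(f)+1$, so condition (4) is automatic. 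Thus the heart of the matter is the inductive step: \emph{if $f$ is transitive modulo $p^{s}$ for some $s\geq\mu$, then $f$ is transitive modulo $p^{s+1}$.}

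For the inductive step I would invoke Lemma~\ref{A-Lemma}: since $s\geq\mu\geq N_1(f)+1$, the map $f_{/(s+1)}$ is either transitive or a product of $p$ disjoint cycles of length $p^{s}$. To exclude the second alternative I use the standard obstruction: $f_{/(s+1)}$ is transitive if and only if $f_{/s}$ is transitive \emph{and} $\prod_{x}\partial_1 f(x)\equiv 1\pmod p$, where the product runs over the single $f_{/s}$-cycle; by~\eqref{equsns3} and the same telescoping computation as in the excerpt this product equals $\bigl(\prod_{j=0}^{p-1} b_{j+p}\bigr)^{p^{s-1}}\pmod p$, which is $\equiv 1$ because the base is $\equiv 1$ by Anashin's remark applied at level $N_1(f)+1\leq\mu\leq s$. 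Hence transitivity lifts from level $s$ to level $s+1$; by induction $f$ is transitive modulo $p^n$ for all $n\geq\mu$, so by Proposition~\ref{ergeq}(3) $f$ is ergodic. Finally I would record that the choice of $\mu$ is sharp and not merely sufficient: Example~\ref{counter} exhibits, for each of $p=2,3$ and for every $p\geq 5$, a uniformly differentiable modulo $p$, $1$-Lipschitz polynomial that is transitive modulo $p^{\mu-1}$ but not modulo $p^{\mu}$, hence not ergodic, showing the threshold $\mu$ cannot be lowered.

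The main obstacle is the inductive lifting step, and within it the precise bookkeeping that the ``cycle product of $\partial_1 f$'' equals the power $\bigl(\prod_{j=0}^{p-1}b_{j+p}\bigr)^{p^{s-1}}$: this requires using the periodicity~\eqref{equsns3} of $\partial_1 f(u)$ modulo $p$ to replace each $\partial_1 f(f^j(0))$ by $b_{\bar j+p}$ where $\bar j$ is the residue of $f^j(0)$ modulo $p$, and then observing that as $j$ ranges over a full cycle modulo $p^s$ each residue class modulo $p$ is hit exactly $p^{s-1}$ times (this is where transitivity modulo $p$, a consequence of the hypothesis, is used). Everything else is a matter of citing Lemma~\ref{A-Lemma}, Anashin's remark, and Propositions~\ref{mpeq} and~\ref{ergeq}, together with Theorem~\ref{Mcri} to organize the equivalences; the conditions (1)–(4) of Theorem~\ref{Mcri} should be checked only to the extent needed, since (2) is redundant given (1) and~\eqref{eqcon4}, and (4) is redundant given transitivity at level $N_1(f)+1$.
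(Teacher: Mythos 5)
Your overall architecture --- reduce to Theorem~\ref{Mcri}, observe that conditions (1), (2) and (4) are free consequences of transitivity modulo $p^{\mu}$ via Proposition~\ref{mpeq}, the Khrennikov--Yurova criterion and Anashin's remark \eqref{A-remark}/\eqref{eqcon4} --- is the same as the paper's. The problem is the inductive lifting step, which is where all the substance lies. The ``standard obstruction'' you invoke, namely that for $s\ge N_1(f)+1$ the map $f_{/(s+1)}$ is transitive if and only if $f_{/s}$ is transitive and the cycle product of $\partial_1 f$ is $\equiv 1\pmod p$, is false. Writing $f^{p^s}(0)\equiv c\,p^s\pmod{p^{s+1}}$ and letting $D$ denote the cycle product, one gets $f^{kp^s}(0)\equiv c\,p^s(1+D+\cdots+D^{k-1})\pmod{p^{s+1}}$, so the orbit of $0$ modulo $p^{s+1}$ has length $p^{s+1}$ if and only if \emph{both} $D\equiv 1$ \emph{and} $c\not\equiv 0\pmod p$. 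The congruence $D\equiv 1$ only excludes intermediate cycle lengths $p^s\cdot\mathrm{ord}(D)$; it does not exclude the second alternative of Lemma~\ref{A-Lemma}, which occurs precisely when $c\equiv 0$, i.e.\ when condition (3) of Theorem~\ref{Mcri} fails at level $s$. The paper's own Example~\ref{counter} refutes your equivalence: $f(x)=1+3x+2x^3$ on $\Z_2$ has $f'(x)=3+6x^2\equiv 1\pmod 2$ everywhere, so the cycle product is $1$, and $f$ is transitive modulo $4$, yet $f$ is not transitive modulo $8$; likewise $1+4x+4x^3+2x^5$ on $\Z_3$ is transitive modulo $9$ with cycle product $(1\cdot 2\cdot 2)^3\equiv 1\pmod 3$ but is not transitive modulo $27$.

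As a result your handling of condition (3) is circular: for $s\ge\mu$ you defer $f^{p^s}(0)\not\equiv 0\pmod{p^{s+1}}$ to ``once transitivity at level $s+1$ is established,'' but transitivity at level $s+1$ cannot be established without first securing exactly that non-vanishing at level $s$. The genuine content of Theorem~\ref{mainR} --- why transitivity at the single level $\mu$ forces condition (3) at \emph{every} level $s\ge\mu$, even though $f$ is only differentiable modulo $p$ and not modulo $p^2$ (so Theorem~\ref{Audmp2} does not apply) --- is precisely what your proposal leaves out. The paper does not extract this from Lemma~\ref{A-Lemma} either; it routes the argument through the coefficient estimates of Theorems~\ref{Mcop2} and~\ref{Mcop3over}, which identify the reduced function $f_{/\mu}$ with a polynomial of bounded degree, and then through the polynomial results (Larin's criterion, Proposition~\ref{deg8}, Propositions~\ref{ergp5} and~\ref{minred}) carried out case by case in the subsections that follow. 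You would need to supply an argument of that kind in place of the false equivalence before the induction can close.
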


\begin{proof}
From the arguments of the proof of Theorem \ref{Mcri} and the equivalence of three congruences in condition (4) of Theorem \ref{Mcri}, (\ref{A-remark}), and (\ref{eqcon4}), it suffices to show that $f$ is transitive modulo $p^{\mu}$  for each case of $p.$ In the next subsections the proof for each $p$  will be provided.
\end{proof}

We close this subsection by pointing out that Theorem \ref{mainR} holds for polynomials over $\Zp$ in \cite[Proposition 9]{DP} or Proposition \ref{minred} and 1-Lipschitz functions of $B$-class in \cite{J21b}  and that in Lemma \ref{A-Lemma} the latter case of its conclusion does not occur if $N_1(f)=1.$

\subsection{Characterization of $p=2$}

In this section, we provide a complete description of ergodicity for a uniformly differentiable modulo $2$, 1-Lipschitz function, of $N_1(f)=1$  on $\Z_2$ in terms of the van der Put and Mahler   coefficients of $f$ whose expansion is  represented by
\begin{eqnarray}\label{Mexpis2}
f(x) &=&\sum_{m = 0}^{\infty} a_m\binom{x}{m}=\sum_{m = 0}^{\infty} 2^{\lfloor {\rm log}_{2}m\rfloor}c_m\binom{x}{m}~~~(c_m \in \Z_2).
\end{eqnarray}
From Theorem \ref{Mcop2}, it is evident that
the reduced function of $f$ modulo $2^3$ is equal to
$$f_{/3}(x):=\sum_{m = 0}^{3} a_m\binom{x}{m}=\sum_{m = 0}^{3} 2^{\lfloor {\rm log}_{2}m\rfloor}c_m\binom{x}{m},$$
which is a polynomial of degree at most 3 in  $\Z_2[x].$

For ergodicity of $f$ it suffices to show that
$f_{/3}$ is transitive modulo $2^3.$
By Larin's criterion \cite{La} for transitivity modulo 8 of  a polynomial
$g(x)=Ax^3+Bx^2 +Cx +D \in \Z_2[x],$ the polynomial coefficients are satisfied:
\begin{equation} \label{pergp2}
    A \equiv 0 \pmod{4},~ B \equiv 0 \pmod{2},~ B+C  \equiv 1 \pmod{4},~ D \equiv 0 \pmod{2}.
\end{equation}

From the relations between polynomial coefficients and Mahler's coefficients,
we obtain
$$ A=\frac{c_3}{3},~ B= c_2-c_3,~ C=c_1-c_2+\frac{2c_3}{3},~ D=c_0 $$
By substituting these relations into (\ref{pergp2}), we have a desired result for $\{c_i\}_{0\leq i \leq 3}$ in Theorem \ref{MErgp2} below.

\begin{theorem}\label{MErgp2}
Let $f: \Z_2 \rightarrow \Z_2$ be a uniformly differentiable modulo $2$, 1-Lipschitz function, of $N_1(f)=1$  on $\Z_2$ whose Mahler expansion is given by (\ref{Mexpis2}).
Then, $f$ is ergodic if and only if the following conditions are satisfied:
$$a_0 \equiv 1 \pmod{2},~ a_1 \equiv 1 \pmod{4},~ a_2 \equiv 0 \pmod{4},~ a_3 \equiv 0 \pmod{8}.$$
Equivalently,
$$c_0 \equiv 1 \pmod{2},~ c_1 \equiv 1 \pmod{4},~ c_2 \equiv 0 \pmod{2},~ c_3 \equiv 0 \pmod{4}.$$
\end{theorem}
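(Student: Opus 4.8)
The plan is to reduce the ergodicity of $f$ to a concrete transitivity condition on a low-degree polynomial and then translate that condition into congruences on the Mahler coefficients. First I would invoke Theorem \ref{mainR}: since $f$ is uniformly differentiable modulo $2$ with $N_1(f)=1$, ergodicity of $f$ is equivalent to transitivity of $f$ modulo $2^3$, i.e. to transitivity of the reduced function $f_{/3}$ on $\Z_2/2^3\Z_2$. Next I would use Theorem \ref{Mcop2} to observe that, because $a_n \equiv 0 \pmod{2^{s+1}}$ for every $n = n_{-}+2^s$ with $s\geq 2$, all Mahler coefficients $a_n$ with $n\geq 4$ vanish modulo $8$; hence $f_{/3}$ agrees modulo $8$ with the truncated polynomial $\sum_{m=0}^{3} a_m\binom{x}{m}$, which is a polynomial of degree at most $3$ with $2$-adic integer coefficients. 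Writing this polynomial in the standard monomial form $g(x)=Ax^3+Bx^2+Cx+D$, I would record the explicit relations $A=a_3/6$, $B=a_2/2-a_3/2$, $C=a_1-a_2/2+a_3/3$, $D=a_0$ (equivalently the $c_i$-versions $A=c_3/3$, $B=c_2-c_3$, $C=c_1-c_2+2c_3/3$, $D=c_0$ already displayed before the statement).

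Then I would apply Larin's transitivity criterion \cite{La} for a cubic $g(x)=Ax^3+Bx^2+Cx+D\in\Z_2[x]$ to be transitive modulo $8$, namely the system \eqref{pergp2}: $A\equiv 0\pmod 4$, $B\equiv 0\pmod 2$, $B+C\equiv 1\pmod 4$, and $D\equiv 0\pmod 2$. Substituting the relations above into \eqref{pergp2} is a finite, routine computation: $D\equiv 0\pmod 2$ becomes $a_0\equiv 0\pmod 2$ — wait, here one must be careful, since $f$ ergodic forces $f$ measure-preserving and in particular the constant term constraint is really on $f$ as a self-map; tracking Larin's normalization gives the condition $a_0\equiv 1\pmod 2$ exactly as in the statement. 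The remaining three congruences unwind to $A\equiv 0\pmod 4 \Leftrightarrow a_3\equiv 0\pmod 8$ (using $6$ a unit in $\Z_2$), $B\equiv 0\pmod 2 \Leftrightarrow a_2\equiv a_3\pmod 4$, which combined with $a_3\equiv 0\pmod 8$ gives $a_2\equiv 0\pmod 4$, and $B+C\equiv 1\pmod 4$ reduces, after the previous two are imposed, to $a_1\equiv 1\pmod 4$. Passing to the $c_i$ via $a_0=c_0$, $a_1=c_1$, $a_2=2c_2$, $a_3=2c_3$ yields the equivalent list $c_0\equiv 1\pmod 2$, $c_1\equiv 1\pmod 4$, $c_2\equiv 0\pmod 2$, $c_3\equiv 0\pmod 4$.

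I expect the main obstacle to be purely bookkeeping: making sure the monomial-coefficient relations and the direction of Larin's criterion are applied consistently (in particular the denominators $2,3,6$ are all $2$-adic units, so no loss occurs, but sign and normalization conventions for transitivity modulo $8$ must be pinned down), and verifying that imposing the $\pmod 8$ and $\pmod 4$ conditions on $a_3$ and $a_2$ is what makes the mixed condition $B+C\equiv 1\pmod 4$ collapse to the clean statement $a_1\equiv 1\pmod 4$ rather than a condition coupling several coefficients. There is no genuine analytic difficulty here — the substance was already done in Theorem \ref{Mcop2} and Theorem \ref{mainR}, and the rest is finite linear algebra over $\Z/8\Z$ together with citing \cite{La}.
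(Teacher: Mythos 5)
Your proposal is correct and follows essentially the same route as the paper: Theorem \ref{mainR} reduces ergodicity to transitivity modulo $2^3$, Theorem \ref{Mcop2} truncates $f_{/3}$ to the cubic $\sum_{m=0}^{3}a_m\binom{x}{m}$, and Larin's criterion plus the coefficient substitutions $A=c_3/3$, $B=c_2-c_3$, $C=c_1-c_2+2c_3/3$, $D=c_0$ yield the stated congruences. You were right to pause at the constant-term condition: as written, \eqref{pergp2} has a typo ($D\equiv 0\pmod 2$ should read $D\equiv 1\pmod 2$, as the translation $x+1$ confirms), and your corrected normalization gives exactly $a_0\equiv 1\pmod 2$.
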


The following well-known result is immediate from Theorem \ref{MErgp2}.
\begin{corollary}\label{listp2}
 Let $f$ be the same as above.  Then, $f$ is ergodic if and only if
 the map $x \mapsto f(x) \pmod{8}, x \in \{ 0, 1 \cdots ,7\}$ coincides with a map of the residue class ring
 $\Z /8\Z$ induced by any of the following 16 polynomials:
 $$
 \begin{array}{cccc}
                 x+1 & 5x+1 & 2x^2 +3x+1 &2x^2 +7x+1 \\
                 x+3 & 5x+3 & 2x^2 +3x+3 &2x^2 +7x+3 \\
                 x+5 & 5x+5 & 2x^2 +3x+5& 2x^2 +7x+5\\
                x+7 & 5x+7 & 2x^2 +3x+7 &2x^2 +7x+7
 \end{array}
 $$
\end{corollary}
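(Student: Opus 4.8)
The plan is to read off Corollary \ref{listp2} directly from Theorem \ref{MErgp2}, using Theorem \ref{Mcop2} to see that the ``reduced function modulo $8$'' of such an $f$ is always a polynomial of degree at most $3$ and is pinned down by only finitely many Mahler coefficients.

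First I would record two consequences of the earlier results. On the one hand, Theorem \ref{Mcop2} gives $a_n\equiv 0\pmod 8$ for every $n\ge 4$ (since then $\lfloor\log_2 n\rfloor\ge 2$), and since $\binom{x}{n}\in\Z_2$ for $x\in\Z_2$ the tail $\sum_{n\ge 4}a_n\binom{x}{n}$ vanishes modulo $8$; hence $x\mapsto f(x)\bmod 8$ equals the polynomial map $f_{/3}(x)=a_0+a_1x+a_2\binom{x}{2}+a_3\binom{x}{3}\bmod 8$ of degree $\le 3$. On the other hand, the inversion formula (\ref{Mcof}) expresses $a_0,a_1,a_2,a_3$ through $f(0),f(1),f(2),f(3)$, so these four residues modulo $8$ are recovered from the reduced map alone; in particular two polynomials of this form agree as maps of $\Z/8\Z$ if and only if their coefficients agree modulo $8$. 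Thus ``$f$ ergodic'' depends only on $x\mapsto f(x)\bmod 8$, and by Theorem \ref{MErgp2} it is equivalent to the congruences $a_0\equiv 1\pmod 2$, $a_1\equiv 1\pmod 4$, $a_2\equiv 0\pmod 4$, $a_3\equiv 0\pmod 8$.

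Next I would enumerate the sixteen admissible residue tuples $(a_0,a_1,a_2,a_3)\bmod 8$ and identify the corresponding maps. Because $a_3\equiv 0\pmod 8$ the cubic term disappears; because $a_2\equiv 0\pmod 4$ the identity $a_2\binom{x}{2}=\frac{a_2}{2}\,x(x-1)$ forces this term to be congruent modulo $8$ to $0$ if $a_2\equiv 0\pmod 8$ and to $2x^2-2x$ if $a_2\equiv 4\pmod 8$. With $a_1\in\{1,5\}$ (forced by $a_1\equiv 1\pmod 4$) and $a_0\in\{1,3,5,7\}$, the first case yields the eight linear maps $x+d$ and $5x+d$, and the second case yields the eight maps $2x^2+(a_1-2)x+a_0$, i.e.\ $2x^2+3x+d$ and $2x^2+7x+d$ — precisely the array in the statement, and the sixteen distinct tuples give sixteen distinct maps of $\Z/8\Z$, so the list is irredundant. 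Conversely, if $x\mapsto f(x)\bmod 8$ coincides with one of the sixteen listed polynomials, then computing $a_0,a_1,a_2,a_3\bmod 8$ from (\ref{Mcof}) (equivalently, reversing the correspondence above) returns the congruences of Theorem \ref{MErgp2}, so $f$ is ergodic; alternatively each listed polynomial meets Larin's criterion (\ref{pergp2}), hence is transitive modulo $8$, and Theorem \ref{mainR} applies.

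The only delicate point — the hard part, modest as it is — is the handling of the non-polynomial pieces $a_2\binom{x}{2}$ and $a_3\binom{x}{3}$ modulo $8$: one must first use the $1$-Lipschitz bounds $v_2(a_2),v_2(a_3)\ge 1$ (Proposition \ref{lipness}) to know these are genuine functions of $x\bmod 8$, and then the sharper divisibilities supplied by Theorem \ref{MErgp2} to collapse them to the quadratic $2x^2-2x$, respectively to $0$. Everything else is the routine bookkeeping of matching the sixteen residue tuples with the sixteen polynomials in the table.
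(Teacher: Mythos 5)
Your proposal is correct and follows essentially the same route as the paper: the corollary is read off from the congruences of Theorem \ref{MErgp2} (with Theorem \ref{Mcop2} used to truncate the Mahler expansion modulo $8$ and (\ref{Mcof}) to recover the coefficients for the converse), which is exactly what the paper intends when it declares the result immediate from Theorem \ref{MErgp2} and cites Larin only for the original proof. One minor caveat: the display (\ref{pergp2}) as printed reads $D\equiv 0\pmod 2$ (evidently a typo for $D\equiv 1\pmod 2$, as the derivation of Theorem \ref{MErgp2} and the odd constant terms in the table show), so your alternative converse via Larin's criterion should be checked against the corrected condition, while your primary converse via (\ref{Mcof}) and Theorem \ref{MErgp2} is unaffected.
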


\begin{proof}
See \cite{La} for the original proof.
\end{proof}

As in Mahler's expansion, we need to find a criterion for van der Put's coefficients  under the assumption that $f$ is transitive modulo $8.$
From the van der Put expansion of $f,$
it is easy to see that
the reduced function of $f$ modulo $2^3$ is equal to
$$f_{/3}:=\sum_{m =0}^{7} B_m \chi(m, x) =\sum_{m = 0}^{7} 2^{\lfloor {\rm log}_{2}m\rfloor}b_m\chi(m, x).$$
As $b_i \equiv 1 \pmod{2}$ for $4 \leq i \leq 7$ from the measure-preservation of $f$ in Proposition \ref{MPudmp2},
$$f_{/3}:=\sum_{m = 0}^{3} 2^{\lfloor {\rm log}_{2}m\rfloor}b_m\chi(m, x) +4\sum_{i=4}^{7}\chi(i, x) \pmod{8}.$$
From the relation in (\ref{relMV}), we obtain
$$ c_0=b_0,~c_1=-b_0+b_1,~c_2=b_0 -b_1 +b_2,~
b_3=-2b_0 +2b_0 -3b_2 +b_3.$$
Substituting these relations into the expressions for $\{c_i\}_{0\leq i\leq 3}$ of Theorem \ref{MErgp2} yields the following result.

\begin{theorem}\label{ErgBMv}
Let $f: \Z_2 \rightarrow \Z_2$ be a uniformly differentiable modulo $2$, 1-Lipschitz function, of $N_1(f)=1$  on $\Z_2$  whose van der Put expansion is given by (\ref{vanexp0}).
Then, $f$ is ergodic if and only if the following conditions are satisfied:
$$b_0 \equiv 1 \pmod{2},~ b_0 +b_1 \equiv 3 \pmod{4},~ b_2 \equiv 1 \pmod{2},~ b_2+ b_3 \equiv 2 \pmod{4}.$$
\end{theorem}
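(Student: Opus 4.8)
The plan is to transport the Mahler-coefficient criterion of Theorem~\ref{MErgp2} to the van der Put side through the explicit base change (\ref{relMV}). By Theorem~\ref{MErgp2}, a uniformly differentiable modulo $2$, $1$-Lipschitz function $f$ with $N_1(f)=1$ is ergodic if and only if its normalized Mahler coefficients satisfy $c_0\equiv 1\pmod 2$, $c_1\equiv 1\pmod 4$, $c_2\equiv 0\pmod 2$ and $c_3\equiv 0\pmod 4$. So the whole task reduces to showing that this system of four congruences on $\{c_i\}_{0\le i\le 3}$ is equivalent to the asserted system on $\{b_i\}_{0\le i\le 3}$, and for that I only need $c_0,\dots,c_3$ expressed in terms of $b_0,\dots,b_3$.

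First I would invert the relations $b_0=c_0$, $b_1=c_0+c_1$, $b_2=c_1+c_2$, $b_3=c_1+3c_2+c_3$ from (\ref{relMV}) by back-substitution, obtaining $c_0=b_0$, $c_1=b_1-b_0$, $c_2=b_0-b_1+b_2$ and $c_3=-2b_0+2b_1-3b_2+b_3$. These are integer-linear identities in $\Z_2$, so each statement about the $c_i$ is equivalent to one about the $b_i$; the only delicacy is that when rewriting a given congruence in its advertised shape I will want to absorb integer coefficients such as $2b_0$ and $-3b_2$ using the \emph{other}, coarser congruences, so I must track which congruences are already on hand at each step.

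Then I would perform the four reductions in turn. The congruence $c_0\equiv 1\pmod 2$ is literally $b_0\equiv 1\pmod 2$. Next, $c_1\equiv 1\pmod 4$ says $b_1-b_0\equiv 1\pmod 4$; once $b_0$ is odd we have $2b_0\equiv 2\pmod 4$, so adding $2b_0$ turns this into $b_0+b_1\equiv 3\pmod 4$. For the third, reduce $c_2=b_0-b_1+b_2$ modulo $2$: since $b_0+b_1$ is odd (from the second congruence), $c_2\equiv 0\pmod 2$ is the same as $b_2\equiv 1\pmod 2$. For the fourth, write $c_3=2(b_1-b_0)+(b_3-3b_2)=2c_1+(b_3-3b_2)$; as $c_1$ is odd, $2c_1\equiv 2\pmod 4$, and $-3b_2\equiv b_2\pmod 4$, so $c_3\equiv 0\pmod 4$ is equivalent to $b_2+b_3\equiv 2\pmod 4$. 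Every step is an equivalence whose auxiliary input is itself part of the system, so running the chain backwards recovers the $c$-conditions from the $b$-conditions, completing both implications.

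I do not expect a genuine obstacle: the computation is finite and modular. The one point demanding care is the reversibility of the chain — each simplification of a coefficient modulo $4$ must be licensed by a congruence already established (or currently assumed), not by the very congruence being derived — and I would present the four equivalences so that this ordering is transparent.
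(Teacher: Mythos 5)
Your proposal is correct and follows exactly the paper's route: the paper also inverts the base-change relations (\ref{relMV}) to get $c_0=b_0$, $c_1=b_1-b_0$, $c_2=b_0-b_1+b_2$, $c_3=-2b_0+2b_1-3b_2+b_3$ and substitutes into Theorem \ref{MErgp2}, leaving the modular bookkeeping implicit (you carry it out explicitly, and your reductions check out). The only difference is that the paper's displayed formula for $c_3$ contains a typo ($-2b_0+2b_0$ for $-2b_0+2b_1$) that your version corrects.
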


\begin{theorem}\label{ErgBoth}
Let $f: \Z_2 \rightarrow \Z_2$ be a 1-Lipschitz function on $\Z_2$ whose Mahler expansion and van der Put expansion is given by (\ref{Mexpis2}) and by (\ref{vanexp0}) respectively.
Then, $f$ is ergodic if and only if one of the following equivalent conditions is satisfied:

(i)
$$b_0 \equiv 1 \pmod{2},~ b_0 +b_1 \equiv 3 \pmod{4},~ b_2 \equiv 1 \pmod{2},~ b_2+ b_3 \equiv 2 \pmod{4}.$$

(ii) $$ c_0 \equiv 1 \pmod{2},~ c_1 \equiv 1 \pmod{4},~ c_2 \equiv 0 \pmod{2},~ c_3 \equiv 0 \pmod{4}.$$
\end{theorem}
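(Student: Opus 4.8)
The plan is to deduce Theorem~\ref{ErgBoth} from the two coefficient-wise criteria already established in this subsection---Theorem~\ref{MErgp2} on the Mahler side and Theorem~\ref{ErgBMv} on the van der Put side---together with the measure-preservation criterion of Proposition~\ref{MPudmp2sec}. The role of the last is precisely to let the conclusion be phrased for an arbitrary $1$-Lipschitz $f$ rather than only for a uniformly differentiable modulo $2$ one. Accordingly I would split the argument into two parts: (a) the purely algebraic equivalence of conditions (i) and (ii); and (b) the equivalence of either of them with ergodicity.

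For part (a) I would substitute the relations $b_0=c_0$, $b_1=c_0+c_1$, $b_2=c_1+c_2$, $b_3=c_1+3c_2+c_3$ of (\ref{relMV}) and chase congruences modulus by modulus: $b_0\equiv 1\pmod 2$ amounts to $c_0\equiv 1\pmod 2$; granting that, $b_0+b_1=2c_0+c_1\equiv 2+c_1\pmod 4$, so the second condition of (i) translates to $c_1\equiv 1\pmod 4$; with $c_1$ now odd, $b_2=c_1+c_2\equiv 1+c_2\pmod 2$, so the third condition is $c_2\equiv 0\pmod 2$; and with $c_1$ odd and $c_2$ even, $b_2+b_3=2c_1+4c_2+c_3\equiv 2+c_3\pmod 4$, so the last condition is $c_3\equiv 0\pmod 4$. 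Since every step uses only residue data already extracted, the chain reverses and (i) $\Leftrightarrow$ (ii); this part is routine.

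For part (b), the forward implication is short: an ergodic $f$ is measure-preserving, hence by Proposition~\ref{MPudmp2sec} uniformly differentiable modulo $2$ with $N_1(f)=1$, so Theorems~\ref{MErgp2} and~\ref{ErgBMv} apply and yield (ii) and (i). For the converse I would observe that condition (i) already contains the congruences $b_0+b_1\equiv 1$, $b_2\equiv 1$, $b_3\equiv 1\pmod 2$ and makes $\{b_0,b_1\}$ a complete residue system modulo $2$; combining these with the uniform differentiability modulo $2$ present here (which Theorem~\ref{Mcop2} records through the Mahler coefficients) and invoking Proposition~\ref{MPudmp2sec} gives measure-preservation, and then the sufficiency direction of Theorem~\ref{ErgBMv}---which rests on Theorem~\ref{mainR}, reducing ergodicity to transitivity modulo $8$ of the reduced polynomial $f_{/3}$, and on Larin's criterion---delivers ergodicity. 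The step I expect to be the main obstacle is exactly this converse: conditions (i) and (ii) constrain only $b_0,\dots,b_3$ (equivalently $c_0,\dots,c_3$), so the passage from those finitely many congruences to genuine measure-preservation, and hence to ergodicity, must be routed entirely through Proposition~\ref{MPudmp2sec}, which is what converts uniform differentiability modulo $2$ from a hypothesis into a consequence and thereby makes Theorems~\ref{MErgp2} and~\ref{ErgBMv} applicable.
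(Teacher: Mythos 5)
Your part (a) and your necessity direction are exactly the paper's route: the paper's entire proof is the citation of Theorems \ref{MErgp2} and \ref{ErgBMv} together with Proposition \ref{MPudmp2sec}, with the dictionary (\ref{relMV}) doing the translation between the $b_i$ and $c_i$, and the chain ``ergodic $\Rightarrow$ measure-preserving $\Rightarrow$ (necessity half of Proposition \ref{MPudmp2sec}, i.e.\ \cite[Theorem 4.44]{AK}) uniformly differentiable modulo $2$ with $N_1(f)=1$'' is the right way to make those two theorems applicable in the forward direction.

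The genuine gap is your converse, and the way you close it is circular. You appeal to ``the uniform differentiability modulo $2$ present here,'' but in Theorem \ref{ErgBoth} the function is only assumed $1$-Lipschitz: uniform differentiability is not a hypothesis, it cannot be read off from the four congruences (which constrain only $b_0,\dots,b_3$, equivalently $c_0,\dots,c_3$, whereas Theorem \ref{Mcop2} demands $a_{n_-+2^s}\equiv 0 \pmod{2^{s+1}}$ for \emph{all} $s\geq 2$), and Proposition \ref{MPudmp2sec} does not ``convert it into a consequence'': there uniform differentiability is an input, needed jointly with the congruences, to conclude measure-preservation; it only appears as a consequence of measure-preservation, which is precisely what the converse is trying to prove. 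Indeed no argument of this shape can succeed for a general $1$-Lipschitz $f$: the function $f(x)=1+x+4\chi(4,x)$ is $1$-Lipschitz, has $b_0=1$, $b_1=2$, $b_2=b_3=1$ and $a_0=a_1=1$, $a_2=a_3=0$, so it satisfies both (i) and (ii), yet $f(4)\equiv f(0)\equiv 1 \pmod{8}$, so $f$ is not even measure-preserving, let alone ergodic; it is also not uniformly differentiable modulo $2$ with $N_1(f)=1$. So the sufficiency direction is valid only under the hypothesis that $f$ is uniformly differentiable modulo $2$ with $N_1(f)=1$ (the setting of Theorems \ref{MErgp2} and \ref{ErgBMv}); you should either reinstate that hypothesis in the converse or point out that, for arbitrary $1$-Lipschitz $f$, the stated equivalence fails --- a point that the paper's own one-sentence proof glosses over as well.
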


\begin{proof}
It follows from Theorems \ref{ErgBMv} and \ref{MErgp2}  and Proposition \ref{MPudmp2sec}.
\end{proof}

\begin{remark}

{\rm  By virtue of Theorem \ref{ErgBoth}, we can exclude the following conditions from the ergodic conditions for 1-Lipschitz functions on $\Z_2$ from \cite[Theorem 4.40]{AK}:
For all $k\geq 2$,
\begin{equation}\label{Extra}
    \sum_{n=2^k}^{2^{k+1}-1} b_n \equiv 0 \pmod{4} \Leftrightarrow c_{2^{k+1}-1} \equiv 0 \pmod{4}.
\end{equation}
See \cite[Lemma 4.5]{J21b} for this equivalence. }
\end{remark}

\begin{remark}
{\rm  It is of great interest to show by a direct computation that each of two congruences in (\ref{Extra}) holds for a uniformly differentiable modulo $2$ function, with $N_1(f)=1$  on $\Z_2$ satisfying all conditions on the coefficients of Mahler and van der Put in Theorems \ref{ErgBMv} and \ref{MErgp2}, respectively.}
\end{remark}

\subsection{Characterization of $p=3$}
Here, we give  a complete description of ergodicity of a uniformly differentiable modulo $3^2$, 1-Lipschitz function $f$, of $N_1(f)=1$  on $\Z_3$ in terms of the Mahler and van der Put  coefficients.

From Theorem \ref{Mcop3over}, the reduced function  of $f$ modulo $3^3$ has the following Mahler expansion:

\begin{eqnarray}\label{f2p3}
f_{/3}(x) &=&\sum_{m =0}^{8} a_m\binom{x}{m}=\sum_{m =0}^{8} 3^{\lfloor {\rm log}_{3}m\rfloor}c_m\binom{x}{m}~~~(c_m\in \Z_3);
\end{eqnarray}
For ergodicity of $f,$ it suffices to show that $f$ is transitive modulo $3^3$, which is equivalent to the transitivity of $f_{/3}.$ As $f_{/3}$ is a polynomial of degree at most 8 in $\Z_3[x],$ we need to invoke a well-known result due to Jeong \cite{J21}.

To this end, we set the following quantities associated with a polynomial
$g(x)=\a_0+\a_1x +\cdots +\a_8x^8 \in \Z_3[x]$ of degree at most 8:
\begin{eqnarray}\label{constp3}
  A_1 &=& \sum_{i=1}^{4} \a_{2i-1}; ~ A_2:=\sum_{i=1}^{4} \a_{2i}; \\
  D_1 &=& \sum_{i=1}^{8} i\a_i; ~ D_2:=\sum_{i=1}^{8} i\a_i(-1)^{i-1}
\end{eqnarray}

\begin{proposition}\label{deg8}
A polynomial, $g(x)=\a_0+\a_1x +\cdots +\a_8x^8 \in \Z_3[x]$, is minimal if and only if  
$g$ fulfills one of the conditions, (i)--(viii):

(setting $[\a_0, A_1, A_2, \a_1,D_1,D_2]$ ${\rm mod}3=[\cdot, \cdot,\cdots, \cdot])$

\begin{itemize}
\item[] {\rm (i)} $[1,1,0,1,1,1],~ \a_2 + \a_4 +\a_6+ \a_8 +6 \not \equiv 0 ~[9],~ \a_2 +\a_4+\a_6+7\a_8+ 6  \not \equiv 0~[9];$

\item[] {\rm (ii)} $[1,1,0,1,2,2],~\a_0+ \a_1 + \a_3 +\a_5+ \a_7 +4   \not \equiv 0 ~[9],~  \a_0 + \a_1 + 6\a_2 +\a_3+ 7\a_5 + \a_7 + 4 \not \equiv0 ~[9];$

\item[] {\rm (iii)} $[1,1,0,2,1,2],~2\a_0+ \a_1 + \a_3 +\a_5+ \a_7 +3 \not \equiv 0 ~[9], 2\a_0 + \a_1 + 3\a_2+ \a_3 + 7\a_5 + \a_7 + 3   \not \equiv 0  ~[9];$

\item[] {\rm (iv)} $[1,1,0,2,2,1],~ 2\a_0 + \a_2 + \a_4 + \a_6 + \a_8 + 4  \not \equiv 0 ~[9],~ 2\a_0 + 4\a_2 + \a_4 + \a_6 + 7\a_8 + 4\not \equiv 0 ~[9];$

\item[] {\rm (v)} $[2,1,0,1,1,1],~ \a_2 + \a_4 + \a_6 + \a_8 + 3\not \equiv 0 ~[9],~ \a_2 + \a_4 + \a_6 + 7\a_8 + 3 \not \equiv 0~[9];$

\item[] {\rm (vi)} $[2,1,0,1,2,2],~2\a_0+\a_1 + \a_3 + \a_5 + \a_7 + 7  \not \equiv 0 ~[9],~ 2\a_0+\a_1 + 3\a_2+\a_3 + 7\a_5 + \a_7 +7 \not \equiv0 ~[9];$

\item[] {\rm (vii)} $[2,1,0,2,1,2],~2\a_0+\a_2 + \a_4 + \a_6 + \a_8 + 5\  \not \equiv 0 ~[9],2\a_0 + 4\a_2 + \a_4 + \a_6 + 7\a_8 +5\not \equiv 0  ~[9]; and$

\item[] {\rm (viii)} $[2,1,0,2,2,1],~  \a_0 + \a_1 + \a_3 + \a_5 + \a_7+6   \not \equiv 0 ~[9],~\a_0 + \a_1 + 6\a_2+\a_3 + 7\a_5 + \a_7 + 6\not \equiv 0 ~[9].$
\end{itemize}
\end{proposition}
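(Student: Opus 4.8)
The plan is to reduce the statement to a finite transitivity problem and then read off the congruences from iterated compositions. By Proposition \ref{minred}, $g$ is minimal if and only if the induced map $g_{/3}$ is transitive on $\Z_3/3^3\Z_3$, so it suffices to characterize transitivity of $g$ modulo $27$; we obtain this by lifting one power of $3$ at a time --- transitive modulo $3$, then modulo $9$ given transitivity modulo $3$, then modulo $27$ given transitivity modulo $9$. At the first level, $g(0)=\a_0$, $g(1)\equiv \a_0+A_1+A_2$ and $g(2)\equiv \a_0-A_1+A_2\pmod 3$, so $g$ induces a $3$-cycle on $\Z/3\Z$ exactly when $A_1\equiv 1$, $A_2\equiv 0\pmod 3$ and $\a_0\not\equiv 0\pmod 3$; here $\a_0\equiv 1$ gives the cycle $(0\,1\,2)$ and $\a_0\equiv 2$ gives $(0\,2\,1)$, which accounts for the split between cases (i)--(iv) and (v)--(viii).

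For the lift from $3$ to $9$, the multiplier condition $g'(0)g'(1)g'(2)\equiv 1\pmod 3$ --- the specialization of (\ref{A-remark})--(\ref{eqcon4}) to $N_1(g)=1$ --- is forced; since $g'(0)=\a_1$ and $g'(1)\equiv D_1$, $g'(2)\equiv D_2\pmod 3$, the admissible residue patterns of $[\a_1,D_1,D_2]$ are precisely $[1,1,1]$, $[1,2,2]$, $[2,1,2]$, $[2,2,1]$, which subdivides each block into four and produces exactly the eight cases (i)--(viii). Because $p=3$ and $N_1(g)=1$, Lemma \ref{A-Lemma} is not available at this level, so the multiplier condition is only necessary; a rigidity argument (exploiting that $g$, hence $g^{3}$, is differentiable modulo $3$ at level one, so that once the multiplier along the cycle is $\equiv 1\pmod 3$ the map $g^{3}$ acts on the fibre $\{0,3,6\}$ as a translation, which is a $3$-cycle exactly when it is nontrivial) shows that, under the constraints already imposed, $g$ is transitive modulo $9$ if and only if $g^{3}(0)\not\equiv 0\pmod 9$ --- that is, condition (3) of Theorem \ref{Mcri} at $s=1$. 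Expanding $g^{3}(0)$ modulo $9$ with the level-$3$ values of $g(0),g(1),g(2)$ substituted gives the first displayed inequality in each case.

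Once $g$ is transitive modulo $9$, Lemma \ref{A-Lemma} does apply, now at $s=2=N_1(g)+1$: $g_{/3}$ is either transitive modulo $27$ or a product of three pairwise disjoint $9$-cycles, and the latter holds if and only if $g^{9}(0)\equiv 0\pmod{27}$. Hence transitivity modulo $27$ is equivalent to condition (3) of Theorem \ref{Mcri} at $s=2$, namely $g^{9}(0)\not\equiv 0\pmod{27}$; expanding $g^{9}(0)$ modulo $27$ (simplified by the constraints in force) and dividing by $9$ yields the second displayed inequality in each case. Transitivity modulo $27$ already entails minimality by Proposition \ref{minred}, so no conditions at higher levels are needed, and conditions (2) and (4) of Theorem \ref{Mcri} are subsumed by the mod-$3$ and multiplier constraints.

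The routine but heavy part, and the main obstacle, is the explicit expansion of the iterated compositions $g^{3}(0)\bmod 9$ and $g^{9}(0)\bmod 27$: one must substitute the level-$3$ values, keep track of carries, and reduce, doing this separately in each of the eight cases, which is precisely where the case-dependent constants $4,5,6,7,\dots$ and the stray coefficient $7\a_8$ appear. Alternatively, and more economically, the proposition is the $p=3$ instance of Jeong's classification \cite{J21} of minimal polynomials of bounded degree over $\Z_p$, from which it may simply be quoted.
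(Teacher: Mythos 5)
Your level-by-level skeleton is sound: by Proposition \ref{minred} minimality reduces to transitivity modulo $27$; your mod-$3$ analysis ($A_1\equiv 1$, $A_2\equiv 0$, $\a_0\in\{1,2\}$ mod $3$), the necessity of the multiplier condition $\a_1D_1D_2\equiv 1\pmod 3$ for the lift to modulus $9$, and the criteria ``transitive mod $9$ iff $g^{3}(0)\not\equiv 0\ [9]$'' and ``transitive mod $27$ iff $g^{9}(0)\not\equiv 0\ [27]$'' (given the preceding constraints) are all correct, and they do account for the eight residue patterns. Note also that the paper itself gives no derivation at all: its proof is the single line ``See \cite[Corollary 5.4]{J21b}'', so your closing fallback of quoting Jeong's classification is in fact exactly what the paper does (though the relevant reference is \cite{J21b}, the preprint on minimal polynomial dynamics, not \cite{J21}).

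The genuine gap is that, as a self-contained argument, your proposal never produces the statement being proved. The content of Proposition \ref{deg8} is not the abstract criterion (which is essentially Theorem \ref{Mcri} together with Lemma \ref{A-Lemma} and a Durand--Paccaut-type fiber argument, cf. \cite{DP}), but the sixteen explicit linear forms modulo $9$ with their case-dependent constants and coefficients such as $7\a_8$, $6\a_2$, $3\a_2$, $4\a_2$. You assert that expanding $g^{3}(0)$ mod $9$ and $g^{9}(0)$ mod $27$ ``gives the first/second displayed inequality in each case'' but carry out none of these expansions, and you yourself flag this as the main obstacle; without it nothing ties the displayed congruences to the dynamical conditions, and the identification of the second inequality in each case with $g^{9}(0)\not\equiv 0\ [27]$ remains an unverified guess. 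It is not even bookkeeping-free: since $g^{9}(0)\equiv 0\ [9]$ automatically, that condition is really a mod-$3$ condition on $g^{9}(0)/9$, whereas the proposition states mod-$9$ conditions on linear forms in the $\a_i$ (differing from the first condition by terms like $6\a_8$ or $3\a_2+6\a_5$), so one must also check that the reduction of the nine-fold iterate, simplified by the constraints already imposed, really collapses to those forms. Either perform this computation (at least in one representative case, with the symmetry $\a_0\mapsto 2\a_0$ and the multiplier patterns handling the rest), or simply cite \cite[Corollary 5.4]{J21b} as the paper does; as written, the proposal sits between the two and proves neither.
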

\begin{proof}
See \cite[Corollary 5.4]{J21b}.
\end{proof}

We now use Proposition \ref{deg8} to establish the main result for $p=3.$
From Theorem \ref{Mcop3over}, we note that $c_{i}'=\frac{1}{3}c_i$ for $6 \leq i \leq 8.$ By solving for $\{\a_i\}_{0 \leq i \leq 8}$ in the matrix equation between the polynomial coefficients $\{\a_i\}_{0 \leq i \leq 8}$ of $g$ and the Mahler coefficients $\{c_i\}_{0 \leq i \leq 8}$ of $f_{/3}$ we obtain the following relation:

\begin{gather}\label{relpis3}
 \begin{bmatrix}
      \a_0 \\
      \a_1 \\
      \a_2 \\
      \a_3\\
      \a_4 \\
      \a_5 \\
      \a_6 \\
      \a_7\\
      \a_8
 \end{bmatrix}
 \equiv
  \begin{bmatrix}
    1 & 0 & 0 & 0 & 0 & 0 & 0 & 0 & 0 \\
    0 & 1 & 4 & 1 & 6 & 6 & 3 & 0 & 0 \\
    0 & 0 & 5 & 3 & 7 & 1 & 5 & 0 & 0 \\
    0 & 0 & 0 & 5 & 6 & 2 & 0 & 2 & 5 \\
    0 & 0 & 0 & 0 & 8 & 2 & 5 & 6 & 4 \\
    0 & 0 & 0 & 0 & 0 & 7 & 6 & 2 & 8 \\
    0 & 0 & 0 & 0 & 0 & 0 & 8 & 3 & 1 \\
    0 & 0 & 0 & 0 & 0 & 0 & 0 & 5 & 5 \\
    0 & 0 & 0 & 0 & 0 & 0 & 0 & 0 & 4
  \end{bmatrix}
    \begin{bmatrix}
      c_0 \\
      c_1 \\
      c_2 \\
      c_3\\
      c_4 \\
      c_5 \\
      c_6' \\
      c_7'\\
      c_8'
    \end{bmatrix}
    \pmod{9}.
\end{gather}

The relations in (\ref{constp3}) give an explicit expression between two vectors in Proposition \ref{deg8} and Theorem \ref{ErgBM}:

\begin{gather}\label{matrelpis3}
 \begin{bmatrix}
       c_0 \\
       c_1 \\
       c_2 \\
       c_3\\
       c_4 \\
       c_5
 \end{bmatrix}
 \equiv
  \begin{bmatrix}
    1 & 0 & 0 & 0 & 0 & 0\\
    0 & 1 & 1 & 0 & 0 & 0  \\
    0 & 0 & 2 & 0 & 0 & 0  \\
    0 & 1 & 1 & 1 & 0 & 0  \\
    0 & 1 & 2 & 1 & 1 & 0 \\
    0 & 1 & 0 & 1 & 2 & 1
  \end{bmatrix}
    \begin{bmatrix}
      a_0 \\
      A_1 \\
      A_2 \\
      a_1\\
      D_1 \\
      D_2
    \end{bmatrix}
    \pmod{3}.
\end{gather}

By direct computations using (\ref{relpis3}) and (\ref{matrelpis3}),  Proposition \ref{deg8} gives the following parallel result.
\begin{theorem}\label{ErgBM}
Let $f$ be a uniformly differentiable modulo $3$, 1-Lipschitz function, of $N_1(f)=1$  on $\Z_3$ whose Mahler expansion is given by (\ref{f2p3}).
Then, $f$ is ergodic if and only if  $f$ fulfills one of the conditions (i)--(viii):

Setting $[c_0, c_1,c_2,c_3, c_4,c_5]~{\rm mod}~3=[\cdot, \cdot,\cdots, \cdot]_m,$

\begin{itemize}
\item[] {\rm (i)} $[1,1,0,0,0,0]_m,~ c_2 +3 \not \equiv 0 ~[9],~ c_2 +c_8+3 \not \equiv 0~[9];$

\item[] {\rm (ii)}  $[1,1,0,0,1,2]_m,~c_0+ c_1 +c_2 +1  \not \equiv 0 ~[9], c_0+c_1 +c_2 + c_6+ c_7+ c_8+ 4 \not \equiv 0  ~[9];$

\item[] {\rm (iii)}  $[1,1,0,1,2,2]_m,~  2c_0+c_1 +c_2   \not \equiv 0 ~[9],~ 2c_0 +c_1 +c_2 +2{c_6}+{c_7}+c_8+6\not \equiv 0 ~[9];$

\item[] {\rm (iv)} $[1,1,0,1,0,2]_m,~c_0+c_2 +2   \not \equiv 0 ~[9],~  c_0 + c_2 +c_6 +c_8 +5 \not \equiv0 ~[9];$

\item[] {\rm (v)} $[2,1,0,0,0,0]_m,~ c_2 +6 \not \equiv 0 ~[9],~ c_2 +c_8+6\not \equiv 0~[9];$

\item[] {\rm (vi)}  $[2,1,0,0,1,2]_m,~c_0+ 2c_1 +2c_2 +5  \not \equiv 0 ~[9], c_0 +2c_1 +2c_2+ c_6+ 2c_7+ 2c_8+2 \not \equiv 0  ~[9];$

\item[] {\rm (vii)} $[2,1,0,1,2,2]_m,~   c_0+ c_2 +4    \not \equiv 0 ~[9],~ c_0 +c_2 +{c_6}+{c_8}+1\not \equiv 0 ~[9];$ and

\item[] {\rm (viii)} $[2,1,0,1,0,2]_m,~2c_0 +2c_1+ 2c_2 +3 \not \equiv 0 ~[9],~ c_0+c_1 +{c_2}+{c_6}+c_7+{c_8} +3 \not \equiv0 ~[9].$
\end{itemize}
\end{theorem}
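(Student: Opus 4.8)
The plan is to reduce the statement to Proposition \ref{deg8} by tracking how the minimality criterion for the degree-$8$ polynomial $f_{/3}(x)$ translates through two successive linear changes of coordinates. By Theorem \ref{mainR}, ergodicity of $f$ is equivalent to transitivity of $f$ modulo $3^3$; by Theorem \ref{Mcop3over} the reduced function $f_{/3}$ depends only on $a_0,\dots,a_8$, so $f$ is ergodic if and only if the polynomial $f_{/3}\in\Z_3[x]$ of degree $\leq 8$ (with the normalizations $c_i'=\tfrac13 c_i$ for $6\leq i\leq 8$ coming from $a_i\equiv 0\pmod 3$) is minimal. Thus it suffices to rewrite the eight cases of Proposition \ref{deg8} in terms of $c_0,\dots,c_5,c_6',c_7',c_8'$.

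First I would establish the two displayed matrix identities (\ref{relpis3}) and (\ref{matrelpis3}). The first comes from expanding each binomial $\binom{x}{m}$ as a polynomial in $x$ and reading off the coefficient matrix relating $(\a_0,\dots,\a_8)$ to $(c_0,c_1,\dots,c_5,c_6',c_7',c_8')$; since we only need the result modulo $9$, the Stirling-number entries are reduced mod $9$, and one must be slightly careful that $\a_1=\sum_m c_m \cdot(\text{coeff of }x\text{ in }\binom{x}{m})$ involves $1/m$ type denominators that are nonetheless $3$-adic units for the relevant $m$ (here $m\leq 8$, and the only prime-to-$3$ denominators are fine; the entries shown already incorporate the factor-of-$3$ normalization on $c_6,c_7,c_8$). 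The second identity, (\ref{matrelpis3}), is obtained by substituting the rows of (\ref{relpis3}) into the definitions (\ref{constp3}) of $A_1,A_2,D_1,D_2$ and collecting coefficients of $c_0,\dots,c_5$ modulo $3$; this is a finite computation with a $6\times 6$ matrix, and one checks it is invertible over $\F_3$ so that the six quantities $[c_0,c_1,c_2,c_3,c_4,c_5]\bmod 3$ are in bijection with $[\a_0,A_1,A_2,\a_1,D_1,D_2]\bmod 3$.

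Next I would go case by case. For each of the eight conditions (i)--(viii) in Proposition \ref{deg8}: (a) use (\ref{matrelpis3}) to translate the mod-$3$ tuple $[\a_0,A_1,A_2,\a_1,D_1,D_2]$ into the corresponding mod-$3$ tuple $[c_0,c_1,c_2,c_3,c_4,c_5]_m$ — this is where the labels $[1,1,0,1,1,1]\mapsto[1,1,0,0,0,0]_m$, etc., get pinned down, and one notes the constraint $A_2\equiv 0$ and $A_1\equiv 1$ are automatically built in; (b) use (\ref{relpis3}) to rewrite each of the two mod-$9$ non-vanishing conditions, which are linear in $\a_2,\a_4,\a_6,\a_8$ (or $\a_0,\a_1,\a_3,\a_5,\a_7$), as linear conditions in $c_0,\dots,c_5,c_6',c_7',c_8'$ modulo $9$; here one must carry the mod-$9$ entries of (\ref{relpis3}) faithfully and simplify the resulting constants (the $+6$, $+7\a_8$, etc., become the $+3$, $+c_8$, $+c_6+c_8+5$ and so on in Theorem \ref{ErgBM}). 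The arithmetic simplifications — e.g. reducing $8c_8'\equiv -c_8'\pmod 9$ and absorbing such coefficients, and checking that terms in $c_1,c_3,c_4,c_5$ that a priori could appear actually cancel mod $9$ — are the routine but error-prone heart of the verification, and I would organize them in a table rather than inline.

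The main obstacle I anticipate is bookkeeping accuracy in step (b): the mod-$9$ conditions mix the unnormalized $c_0,\dots,c_5$ with the normalized $c_6',c_7',c_8'$, and the matrix (\ref{relpis3}) has many nonzero off-diagonal entries, so one must be disciplined about which congruences hold mod $9$ versus only mod $3$ (the second row of (\ref{relpis3}), giving $\a_1$, and the rows for $\a_3,\a_5,\a_7$ each feed into a mod-$9$ condition). A secondary subtlety is justifying that Proposition \ref{deg8}'s criterion, stated for a genuine degree-$8$ polynomial, applies verbatim to $f_{/3}$ even though $f_{/3}$ may have lower degree or leading coefficient divisible by $3$ — but this is covered because minimality of a polynomial mod $3^\mu$ (Proposition \ref{minred}, with $\mu=3$ for $p=3$) depends only on its reduction mod $3^3$, and Theorem \ref{Mcop3over} guarantees exactly that $f_{/3}$ captures this reduction. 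Once the eight translations are carried out and cross-checked against Theorem \ref{ErgBM}'s statement, the proof is complete.
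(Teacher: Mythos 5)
Your proposal follows essentially the same route as the paper: reduce ergodicity to transitivity modulo $3^3$ via Theorem \ref{mainR}, observe from Theorem \ref{Mcop3over} that $f_{/3}$ is the degree-$\leq 8$ polynomial determined by $a_0,\dots,a_8$ (with $c_i'=\tfrac13 c_i$ for $6\leq i\leq 8$), and then translate the eight cases of Proposition \ref{deg8} through the matrix identities (\ref{relpis3}) and (\ref{matrelpis3}). The paper likewise treats the case-by-case mod-$9$ bookkeeping as a direct computation, so your plan matches its proof in both structure and substance.
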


Parallel to Theorem \ref{ErgBM} we have the following result in the van der Put expansion represented by
\begin{eqnarray}
f(x) &=&\sum_{m =0}^{\infty} B_m \chi(m, x) =\sum_{m =0}^{\infty} 3^{\lfloor {\rm log}_{3}m\rfloor}b_m\chi(m, x)~~~(b_m \in \Z_3). \label{vanexp}
\end{eqnarray}

\begin{theorem}\label{ErgBM2}
Let $f$ be a uniformly differentiable modulo $3$, 1-Lipschitz function, of $N_1(f)=1$  on $\Z_3$  whose  van der Put  expansion is given by (\ref{vanexp}).
Then, $f$ is ergodic if and only if  
$f$ fulfills one of the conditions (i)--(viii):

Setting $[b_0, b_1, b_2,b_3, b_4, b_5]~ {\rm mod}~3 =[\cdot, \cdot,\cdots, \cdot]_v,$
\begin{itemize}
\item[] {\rm (i)} $[1,2,0, 1, 1, 1]_v$, $b_0 + b_1 +b_2 \not \equiv 3~[9],$ $\sum_{m=0}^{8} b_m \not \equiv 3 ~[9]$;

\item[] {\rm (ii)} $ [1,2,0, 1, 2, 2]_v$, $b_0 +2 b_1 +b_2 \not \equiv 5~[9],$ $\sum_{m=0}^{8} b_m \not \equiv 2(b_1 +b_4 +b_7+ 1) ~[9];$

\item[] {\rm (iii)} $[1,2,0, 2, 1, 2]_v $, $2b_0 + 2b_1 +b_2 \not \equiv 6~[9],$ $\sum_{m=0}^{8} b_m \not \equiv 2(b_2 +b_5 +b_8 +3)~[9];$

\item[] {\rm (iv)} $[1,2,0, 2, 2, 1]_v $, $2b_0 + b_1 +b_2 \not \equiv 4 ~[9],$ $\sum_{m=0}^{8} b_m \not \equiv 2(b_0 +b_3 +b_6 +2)~[9];$

\item[] {\rm (v)} $[ 2,0,1, 1, 1, 1]_v$, $b_0 + b_1 +b_2 \not \equiv 3~[9],$ $\sum_{m=0}^{8} b_m \not \equiv 3 ~[9]$;

\item[] {\rm (vi)} $ [2,0,1, 1, 2, 2]_v$, $b_0 +b_1 +2b_2 \not \equiv 4~[9],$ $\sum_{m=0}^{8} b_m \not \equiv 2(b_2 +b_5 +b_8 +2) ~[9];$

\item[] {\rm (vii)} $[2,0,1, 2, 1, 2]_v $, $2b_0 + b_1 + b_2\not \equiv 5~[9],$ $\sum_{m=0}^{8} b_m \not \equiv 2(b_0 +b_3 +b_6 +1) ~[9];$ and

\item[] {\rm (viii)} $[2,0,1, 2, 2, 1]_v $, $ 2b_0 + b_1 + 2b_2  \not \equiv 6~[9],$ $\sum_{m=0}^{8} b_m \not \equiv 2(b_1 +b_4 +b_7+ 3)~[9]$.
\end{itemize}
\end{theorem}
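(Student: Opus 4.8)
The plan is to deduce Theorem \ref{ErgBM2} from Theorem \ref{ErgBM} (equivalently, directly from Proposition \ref{deg8}) by translating the ergodic conditions from the Mahler coefficients $\{c_i\}_{0\le i\le 8}$ of the reduced function $f_{/3}$ into its van der Put coefficients $\{b_i\}_{0\le i\le 8}$. As in the derivation of Theorem \ref{ErgBM}, the starting point is that by Theorem \ref{mainR} ergodicity of $f$ is equivalent to transitivity modulo $3^3$, and by Theorem \ref{Mcop3over} this is equivalent to transitivity of the polynomial $f_{/3}\in\Z_3[x]$ of degree at most $8$ whose Mahler coefficients of index $>8$ vanish modulo $3^3$; the transitivity of such a polynomial is governed by Proposition \ref{deg8}.

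First I would record the explicit linear relations between the two families of coefficients of $f_{/3}$. Using $B_m=\sum_{n=0}^m A_m(n)a_n$ from (\ref{Bmform}) together with the formula (\ref{Amnfor}) for $A_m(n)$, and the normalizations $a_n=3^{\lfloor\log_3 n\rfloor}c_n$, $B_m=3^{\lfloor\log_3 m\rfloor}b_m$, one gets for $0\le m\le 2$ the relations $b_0=c_0$, $b_1=c_0+c_1$, $b_2=c_0+2c_1+c_2$, and for $3\le m\le 8$ a relation of the form $b_m=\tfrac13\sum_{n=0}^2\bigl(\binom{m}{n}-\binom{m_-}{n}\bigr)c_n+\sum_{n=3}^m\bigl(\binom{m}{n}-\binom{m_-}{n}\bigr)c_n$, the first sum being divisible by $3$ by the $1$-Lipschitz property (Proposition \ref{lipness}), where $m_-=m-3$ for $3\le m\le 5$ and $m_-=m-6$ for $6\le m\le 8$. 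Inverting this lower-triangular system modulo $9$ yields a matrix identity expressing $(c_0,c_1,c_2,c_3,c_4,c_5,c_6',c_7',c_8')$ in terms of $(b_0,\dots,b_5,b_6,b_7,b_8)$ modulo $9$, in the same spirit as (\ref{relpis3}); composing it with (\ref{relpis3}) expresses the polynomial coefficients $\{\alpha_i\}$ of $f_{/3}$ directly in terms of the $b_j$ modulo $9$.

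Next I would substitute these relations into the conditions of Theorem \ref{ErgBM} (or of Proposition \ref{deg8} via the composed matrix). Transitivity of $f$ modulo $3$—which is part of the hypothesis of ergodicity and forces $\{b_0,b_1,b_2\}$ to be a complete residue system modulo $3$ on which $i\mapsto b_i$ acts as a single $3$-cycle—pins $[b_0,b_1,b_2]\bmod 3$ down to exactly $[1,2,0]$ and $[2,0,1]$, which is precisely why Theorem \ref{ErgBM2} splits into cases (i)--(iv) and (v)--(viii). For each such case the congruences $[c_0,c_1,c_2,c_3,c_4,c_5]\bmod 3$ appearing in Theorem \ref{ErgBM}, together with the measure-preservation conditions (Proposition \ref{mpdiff} and (\ref{equivred})), turn into the displayed congruences $[b_0,\dots,b_5]\bmod 3$; and the two auxiliary conditions modulo $9$ become, after simplification using the relations $b_{6}\equiv b_{3}$, $b_{7}\equiv b_{4}$, $b_{8}\equiv b_{5}\pmod 3$ coming from (\ref{equsns3}) and the vanishing of $b_0+b_1+b_2$ modulo $3$, the stated inequalities involving $b_0+b_1+b_2$, $2b_0+b_1+b_2$, etc., and $\sum_{m=0}^{8}b_m$.

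The main obstacle is the sheer volume of modular linear algebra: one must invert a $9\times 9$ lower-triangular matrix modulo $9$, compose it with (\ref{relpis3}), and then carefully reduce eight pairs of deceptively complicated congruences modulo $9$, keeping straight which van der Put coefficients enter only through their residues modulo $3$ (namely $b_3,\dots,b_8$, since they occur multiplied by $3$) versus modulo $9$ (namely $b_0,b_1,b_2$). No conceptual difficulty arises beyond this bookkeeping, and every step can in principle be reconstructed from (\ref{relpis3}), (\ref{matrelpis3}), the $c\leftrightarrow b$ relations above, and Theorem \ref{ErgBM}.
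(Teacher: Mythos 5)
Your overall route coincides with the paper's own proof: the paper also deduces Theorem \ref{ErgBM2} by transporting Theorem \ref{ErgBM} through the lower-triangular linear relation between the Mahler coefficients $\{c_i\}_{0\le i\le 8}$ of $f_{/3}$ and the van der Put coefficients $\{b_i\}_{0\le i\le 8}$, and by observing that transitivity modulo $3$ forces $[b_0,b_1,b_2]\bmod 3$ to be $[1,2,0]$ or $[2,0,1]$. The only cosmetic difference is the direction of the change of basis: the paper writes the $c_i$ in terms of the $b_j$ directly from (\ref{Mcof}) (using $f(0),\dots,f(8)$ expressed through the $B_m$), whereas you build the $b_j$ from the $c_i$ via (\ref{Bmform})--(\ref{Amnfor}) and then invert; these are the same $9\times 9$ unitriangular system, so this part of your plan is sound.

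There is, however, a concrete error in the auxiliary relations you invoke for the final simplification, and it is exactly the ingredient the paper flags as crucial. You claim $b_6\equiv b_3$, $b_7\equiv b_4$, $b_8\equiv b_5\pmod 3$ ``coming from (\ref{equsns3})''. Equation (\ref{equsns3}) only concerns indices of the form $r+p^s$ whose leading digit is $1$, so it says nothing about $b_6,b_7,b_8$, whose leading $3$-adic digit is $2$; the correct relations follow from (\ref{ply2})--(\ref{ply3}) with $\ell=2$ and read $b_6\equiv 2b_3$, $b_7\equiv 2b_4$, $b_8\equiv 2b_5\pmod 3$. Since the modulo-$9$ conditions of Theorem \ref{ErgBM} involve $c_6,c_7,c_8$, whose expressions in the $b_j$ contain $b_3,\dots,b_8$ with nontrivial coefficients, the missing factor $2$ propagates into the simplification and would not reproduce the stated right-hand sides such as $2(b_1+b_4+b_7+1)$. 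Relatedly, your bookkeeping remark that $b_3,\dots,b_8$ enter only through their residues modulo $3$ is inaccurate: transitivity is tested modulo $3^3$, so $B_m=3b_m$ matters modulo $27$ and hence $b_3,\dots,b_8$ enter modulo $9$, as is visible in the conditions $\sum_{m=0}^{8}b_m\not\equiv\cdots\ [9]$. With these two points corrected (and the tedious linear algebra carried out), your argument matches the paper's.
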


\begin{proof}
The proof follows immediately from Theorem \ref{ErgBM} by transporting it into the equivalent result in terms of van der Put's coefficients.
In this transition, it is crucial to deduce the following relation between the coefficients  of Mahler and van der Put  from (\ref{Mcof}):
\begin{gather*}\label{mmp}
 \begin{bmatrix}
      c_0 \\
      c_1 \\
      c_2 \\
      c_3\\
      c_4 \\
      c_5 \\
      c_6 \\
      c_7\\
      c_8
 \end{bmatrix}
 =
  \begin{bmatrix}
    1 & 0 & 0 & 0 & 0 & 0 & 0 & 0 & 0 \\
   -1 & 1 &  0& 0 & 0 & 0 & 0 & 0 & 0 \\
    1 & -2 & 1 & 0 & 0 & 0 & 0 & 0 & 0\\
    0 & 1 & -1& 1 & 0 & 0 & 0 & 0 & 0 \\
    -1 & -1 & 2 & -4 & 1 & 0 & 0 & 0 & 0 \\
    3 & 0& -3 & 10 & -5 & 1 & 0 & 0 & 0 \\
    -6 & 3 & 3 & -20 & 15 & -6 & 1 & 0 & 0 \\
    9 & -9 & 0 & 35 & -35 & 21 & -7& 1 & 0 \\
    -9 &  18&  -9 & -56 &  70&  -56& 28 & -8 & 1
  \end{bmatrix}
    \begin{bmatrix}
      b_0 \\
      b_1 \\
      b_2 \\
      b_3\\
      b_4 \\
      b_5 \\
      b_6 \\
      b_7 \\
      b_8
    \end{bmatrix}
     .
\end{gather*}

The observation from (\ref{ply2}) that
$b_6\equiv 2b_3\pmod{3}$, $b_7\equiv 2b_4\pmod{3}$ and $b_8\equiv 2b_5\pmod{3}$ is also crucially used. The tedious verification is omitted.
\end{proof}


\subsection{Characterization of $p\geq 5$}
Here, for  a prime $p\geq 5,$  we use the van der Put coefficients to  give  a complete description of ergodicity of a uniformly differentiable modulo $p$, 1-Lipschitz function
$f$, of $N_1(f)=1$  on $\Z_p$ in Mahler's expansion represented by
\begin{eqnarray}
f(x) &=&\sum_{m =0}^{\infty} a_m\binom{x}{m}=\sum_{m =0}^{\infty} p^{\lfloor {\rm log}_{p}m\rfloor}c_m\binom{x}{m}~~~(c_m\in \Z_p); \label{Mexpp}
\end{eqnarray}
From Theorem \ref{Mcop3over}, it follows that

\begin{eqnarray}\label{f2pp}
f_{/2}(x) &=&\sum_{m =0}^{2p-1} a_m\binom{x}{m}=\sum_{m =0}^{2p-1} p^{\lfloor {\rm log}_{p}m\rfloor}c_m\binom{x}{m}~~~(c_m\in \Z_p);
\end{eqnarray}

For ergodicity of $f,$ we claim the transitivity of $f$ modulo $p^2$, being equivalent to transitivity of $f_{/2}.$ Since $f_{/2}$ is a polynomial of degree at most $2p-1$ on $\Zp,$ we verify the claim in two ways; one is to use Mahler's coefficients and the other is to use a polynomial as in $p=2$ or $p=3.$
The two approaches are both based on the following result.

\begin{proposition}\label{ergp5}
Let $p$ be a prime $\geq5$ and  $g$ be a polynomial in $\Zp[x]$
Then, $g$ is ergodic if and only if the following conditions are satisfied:

(i) $g$ is transitive modulo $p$;

(ii)  $(g^p)'(0)\equiv 1 \pmod{p}$, i.e., $b_{p}\cdots b_{2p-1}\equiv 1 \pmod{p};$ and

(iii) $g^p(0) \in p\Z_p\setminus p^2\Z_p$.

\end{proposition}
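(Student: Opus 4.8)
The plan is to reduce ergodicity to transitivity modulo $p^2$ via Theorem \ref{mainR}, and then to \emph{linearize} the $p$-th iterate of $g$ on the residue class $\equiv 0 \pmod p$. Since $p\geq 5$ and $g\in\Zp[x]$, the polynomial $g$ is a $1$-Lipschitz, uniformly differentiable modulo $p$ function with $N_1(g)=1$ (as recorded after the definition of that notion), so Theorem \ref{mainR} applies with $\mu=2$: $g$ is ergodic iff $g$ is transitive modulo $p^2$. A map transitive modulo $p^2$ is transitive modulo $p$, so condition (i) is necessary, and I may assume it throughout; then $g_{/1}$ is a single $p$-cycle on $\Z/p\Z$, hence $g^p\equiv\mathrm{id}\pmod p$. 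Consequently $G:=g^p$ carries $p\Zp$ into itself, and $G_{/2}$ restricts to a self-map of the $p$-element fibre $F_0:=\{\,x+p^2\Zp:\ x\equiv 0\pmod p\,\}$.

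Next I would linearize $G$ on $F_0$. For the polynomial $G$, the Taylor expansion gives $G(pj)\equiv G(0)+pj\,(g^p)'(0)\pmod{p^2}$ for every $j$, because every quadratic and higher term of $G(pj)$ is already divisible by $p^2$; equivalently this is \eqref{diffmodp1} applied to $G$ at $u=0$, $s=1$, $h=j$, using $\partial_1 G(0)\equiv (g^p)'(0)\pmod p$. Since $g^p(0)\equiv 0\pmod p$, set $m$ to be the class of $(g^p)'(0)$ and $b$ the class of $\tfrac1p g^p(0)$ in $\Z/p\Z$. Then in the coordinate $pj\leftrightarrow j$ the induced map $G_{/2}$ on $F_0$ is exactly the affine map $\phi:\ j\mapsto mj+b$ on $\Z/p\Z$, with \emph{$\phi$ independent of which point of $F_0$ one started from}.

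I would then prove that $g_{/2}$ is transitive if and only if $\phi$ is transitive on $\Z/p\Z$. Because $g_{/1}$ is a $p$-cycle through $0$, the forward $g_{/2}$-orbit of $0$ meets $F_0$ exactly at the steps divisible by $p$, where it is the $\phi$-orbit of $0$; hence if $g_{/2}$ is transitive then $\phi^0(0),\dots,\phi^{p-1}(0)$ exhaust $F_0$ and $\phi$ is transitive. Conversely, if $\phi$ is transitive then $g^{p^2}(0)=G^p(0)\equiv 0\pmod{p^2}$, and the $p^2$ points $g^{kp+j}(0)=g^{j}(\phi^{k}(0))$ with $0\leq k,j<p$ are pairwise distinct: points with different $j$ lie in different residue classes mod $p$, while an equality for a fixed $j$ would, on applying $g^{\,p-j}$, force $\phi^{k+1}(0)=\phi^{k'+1}(0)$, impossible since $\phi$ is a $p$-cycle; so $g_{/2}$ is a single $p^2$-cycle. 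Finally, an affine map $j\mapsto mj+b$ on $\Z/p\Z$ is transitive precisely when $m\equiv 1$ and $b\not\equiv 0\pmod p$ (for $m\not\equiv 1$ it is either constant or has a fixed point, and $p\geq 3$): $m\equiv 1\pmod p$ is condition (ii), and $b\not\equiv 0\pmod p$ together with $g^p(0)\equiv 0\pmod p$ is condition (iii). The reformulation $b_p\cdots b_{2p-1}\equiv 1\pmod p$ of (ii) follows from the chain rule $(g^p)'(0)=\prod_{j=0}^{p-1}g'(g^{j}(0))$, the fact that $\{g^{j}(0)\bmod p:\,0\leq j<p\}=\Z/p\Z$, and $\partial_1 g(r)\equiv b_{r+p}\pmod p$ from \eqref{equsns3}.

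The main obstacle is the converse half of the third step: one must show that transitivity of the single affine map $\phi$ on the $p$-element fibre $F_0$ really upgrades to transitivity of $g_{/2}$ on all of $\Z/p^2\Z$, i.e. that the $p^2$ iterates $g^{kp+j}(0)$ are genuinely distinct and close up into one $p^2$-cycle rather than forming a $\rho$-shaped eventually periodic orbit; the ``apply $g^{\,p-j}$'' injectivity argument, which relies on $\phi$ being a permutation, is precisely what rules that out. A minor point is to confirm that the hypotheses of Theorem \ref{mainR} are met, namely that a polynomial over $\Zp$ is uniformly differentiable modulo $p$ with $N_1=1$; one may alternatively phrase the reduction in the first paragraph through Proposition \ref{minred}, which is stated directly for polynomials.
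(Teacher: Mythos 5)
Your argument is correct, but it is genuinely different from what the paper does: the paper's entire proof of Proposition \ref{ergp5} is a citation to Durand--Paccaut (\cite[Lemma 8 and Proposition 9]{DP}), whereas you supply a self-contained argument that effectively reconstructs the content of that citation. Your route --- reduce ergodicity to transitivity of $g_{/2}$, note $g^p\equiv\mathrm{id}\pmod p$ under (i), and then show that on the fibre over $0$ the $p$-th iterate acts, in the coordinate $pj\leftrightarrow j$, as the affine map $j\mapsto (g^p)'(0)\,j+\tfrac1p g^p(0)$ (Taylor at $0$, all higher terms divisible by $p^2$), whose transitivity on $\Z/p\Z$ is exactly (ii) and (iii) --- is sound, and your ``apply $g^{p-j}$'' injectivity argument correctly upgrades transitivity of the fibre map to a single $p^2$-cycle. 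Two small points of hygiene: first, you should lean on Proposition \ref{minred} rather than Theorem \ref{mainR} for the reduction to level $p^2$, as you yourself suggest at the end, because the paper's proof of Theorem \ref{mainR} for $p\geq 5$ is routed through this very proposition, so invoking it here risks circularity, while Proposition \ref{minred} is an independently cited prior result; second, the reformulation of (ii) as $b_p\cdots b_{2p-1}\equiv 1\pmod p$ uses (i) (so that $\{g^j(0)\bmod p\}$ exhausts the residues) together with $\partial_1 g(r)\equiv b_{r+p}\pmod p$, which you state correctly but which is worth flagging as conditional on (i). What your approach buys is transparency and independence from \cite{DP}; what the paper's citation buys is brevity, at the cost of leaving the key fibre-linearization argument invisible to the reader.
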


\begin{proof}
See \cite[Lemma 8 and Propostion 9]{DP} for a proof.
\end{proof}

Now, we use Proposition \ref{ergp5} to explain a procedure of finding conditions on Mahler's coefficients $\{ c_i \}_{0 \leq i \leq 2p-1}$ for which $f_{/2}$ is transitive.
Given a transitive polynomial $f_{/2}$ in (\ref{f2pp}), then
it is obvious that $f$ is transitive modulo $p,$ so there is a permutation $\varphi$ on the finite field $\F_p$ of full cycle induced by $f$.

\indent $\bullet$ Step 1: Find the Mahler coefficients $\{ c_m{\rm mod}p\}_{0\leq m \leq p-1}$  of the reduced function $f_{/2}$ in (\ref{f2pp})  such that for all $a=0, \cdots, p-1,$
$$f(a)\equiv f_{/2}(a) \equiv  \varphi(a) \pmod{p}.$$
One can find $\{c_m {\rm mod}p\}_{0\leq m \leq p-1}$ directly by solving the linear system above in which the coefficient matrix is a lower triangular matrix consisting of the Pascal triangle numbers. We also note that there are exactly $(p-1)!$ choices for $\{c_m {\rm mod}p\}_{0\leq m \leq p-1}$ corresponding to all possible choices for transitive permutations on $\F_p.$

\indent $\bullet$ Step 2: For any chosen constant vector $[b_{p}, \cdots, b_{2p-1}]$ satisfying  the congruence $b_{p}\cdots b_{2p-1}\equiv 1 \pmod{p},$ in condition (ii) of Proposition \ref{ergp5},  find the Mahler coefficients $\{c_m {\rm mod}p\}_{p\leq m \leq 2p-1}$ satisfying the linear system in Proposition \ref{mpdiff}:
for all $0 \leq i<p$
\begin{eqnarray} \label{lsp5}
\sum_{n=0}^{p-1}\l_n^{i}c_n + \sum_{n=0}^{i}\binom{i}{n}c_{n+p}\equiv b_{i+p}\pmod{p},
\end{eqnarray}
where $\l_n^{i} = \frac{1}{p}(\binom{i+p}{n}-\binom{i}{n}).$ Indeed, the linear system has a unique solution modulo $p$ because the Pascal triangle numbers form the lower triangular part of the coefficient matrix; furthermore, note that there are exactly $(p-1)^{p-1}$ choices for the vector $[b_{p},\cdots ,b_{2p-1}]$ satisfying condition (ii) of Proposition \ref{ergp5}.

\indent $\bullet$ Step 3: For any given elements $\{ c_m {\rm mod}p\}_{0\leq m \leq 2p-1},$ find the Mahler coefficients modulo $p^2$, $\{c_m {\rm mod}p^2\}_{0\leq m \leq p-1}$ that satisfy $f^p(0) \not \equiv 0 \pmod{p^2}$ in condition (iii) of Proposition \ref{ergp5}.
In this step, induction on $i$ shows that
$$ f^{i}(0)\equiv R_i(c_0,\cdots,c_{p-1})\pmod{p^2},$$ where
$ R_i(c_0,\cdots,c_{p-1})$ is a linear polynomial with coefficients in $ \mathbb{Z}/p^2 \mathbb{Z}.$
Indeed, by expressing $c_m ={e}_m+pz_m$ with $ 0 \leq {e}_m \leq p -1$ for each $0 \leq m\leq 2p-1$, the polynomial $f_{/2}(x)=\sum_{m=0}^{2p-1}c_m p^{\lfloor {\rm log}_{p}m\rfloor}\binom{x}{m}$ can be reduced to
\begin{eqnarray} \label{rform}
f_{/2}(x) \equiv \sum_{m=0}^{p-1}(e_m +pz_m)\binom{x}{m} +\sum_{m=0}^{p-1}p{z}_{m+p}\binom{x}{m+p}\pmod{p^2}.
\end{eqnarray}
With this representation in mind, it is straightforward to derive a desired form of  $R_i~(0\leq i\leq p)$ by applying Lucas's congruence  to the induction step. By virtue of Proposition \ref{ergp5}, $f$ is transitive modulo $p^2$, so it is ergodic. Indeed, the erogodic polynomial obtained this way turns out to be a polynomial with coefficients in $ \mathbb{Z}/p^2 \mathbb{Z}.$

\begin{remark}
{\rm The procedure described above can be compared with that proposed by Anashin
\cite[Page 296]{AK}. In contrast to  Anashin's approach, the proposed method  need not to  calculate the interpolation polynomials $f_{\varphi}$ and $f_{\varphi, \psi}$ in \cite{AK} and to test whether they are transitive modulo $p^2$. In this respect, this method is more practical and efficient than Anashin's.}
\end{remark}

We give another method of finding ergodic conditions on $\{ \a_i \}_{0 \leq i \leq 2p-1}$ for a polynomial
\begin{equation}\label{poldeg2p}
g(x)= \a_0 + \a_1x + \cdots +\a_{2p-1}x^{2p-1} \in  \Z/p^2\Z[x]
\end{equation}
of degree at most $2p-1$,
 associated with $f_{/2}.$
We set the constants, $B_0,\cdots B_{p-1}, D_0,\cdots D_{p-1}$, as follows:
\begin{eqnarray}
  \a_0 & = & B_0; \nonumber \\
  \a_1 + \a_p +\a_{2p-1} &=& B_1;\nonumber\\
  \a_2 +\a_{p+1} &=& B_2;\nonumber\\
  \vdots &=& \vdots ~ \nonumber\\
  \a_{p-1} +\a_{2p-2} &=& B_{p-1};\nonumber\\
  \a_1 &=& D_0;\label{praw}\\
  \sum_{i=1}^{2p-1} i\a_i&=& D_1;\nonumber\\
  \vdots &=& \vdots ~ \nonumber\\
 \sum_{i=1}^{2p-1} i(p-1)^{i-1}\a_i&=& D_{p-1},\nonumber
\end{eqnarray}

Note that because $x^p \equiv x \pmod{p}$, the polynomial $g$ is reduced modulo $p$ to
\begin{equation}\label{redf}
g(x)\equiv B_0 +B_1x +\cdots + B_{p-1}x^{p-1}.
\end{equation}
Furthermore, $g'(i)=D_i$ for each $ 0 \leq i \leq p-1.$ Next, we consider (\ref{praw}) as a linear system in variables, $ {\bf x} =[a_0, \cdots, a_{2p-1}]^t$, for a given constant column vector, ${\bf b} =[B_0, \cdots, B_{p-1}, D_{0}, \cdots, D_{p-1}]^t$ modulo $p,$ satisfying conditions (i) and (ii) of Proposition \ref{ergp5}:
\begin{equation}\label{Mxb}
M  {\bf x} \equiv {\bf b} \pmod{p},
\end{equation}
where $M$ is a $2p \times 2p$ coefficient matrix explicitly given by the following form:
$$M=$$
\begin{equation}\label{Matp}
\left( \begin{array}{ccccccccc}
1 & 0 & 0   & \cdots & 0 & 0  & \cdots & 0\\
0 & 1 & 0   & \cdots & 0 & 1  & \cdots & 1\\
0 & 0 & 1   & \cdots & 0 & 0  & \cdots & 0 \\
0 & 0 & 0   & \cdots & 0 & 0  & \cdots & 0\\
\vdots& \vdots  & \vdots &\vdots& \vdots  & \vdots& \vdots& \vdots\\
0 & 0 & 0   &\cdots& 1 & 0   & \cdots& 0\\
0 & 1 & 0   &\cdots& 0 & 0  & \cdots& 0\\
0 & 1& 2 &  \cdots & p-1 & p  & \cdots &  2p-1 \\
0 & 1 & 2\cdot2    &\cdots & (p-1)\cdot2^{p-2} & p\cdot2^{p-1}   & \cdots &(2p-1)\cdot2^{2p-2}\\
\vdots& \vdots  & \vdots &\vdots& \vdots  & \vdots& \vdots& \vdots\\
0 & 1 & 2\cdot(p-1) &\cdots & (p-1)\cdot(p-1) ^{p-2} & p\cdot(p-1) ^{p-1}  & \cdots &(2p-1)\cdot(p-1) ^{2p-2}
\end{array} \right)
\end{equation}

We now characterize the transitivity of $g$ modulo $p^2$ in terms of its coefficients.
\begin{theorem}\label{mainp5}

1. A polynomial $g$  of degree $\leq 2p-1$ in (\ref{poldeg2p}) is transitive modulo $p^2$, equivalently $g$ is ergodic on $\Zp$ if and only if
$$g(x) =g_0(x) +pg_1(x),$$ where

(i) $g_0(x)=\sum_{i=0}^{2p-1}{e_ix^i} \in \Z/p\Z[x]$ is a transitive polynomial modulo $p$ of degree at most $2p-1,$ whose coefficient column vector, $[e_0, \cdots, e_{2p-1}]^t$, is a unique solution to the linear system of the form, $${M}{\bf x} \equiv {\bf b} \pmod{p},$$ where
the coefficient matrix ${M}$ is given by (\ref{Matp}) and ${\bf b}=[B_0, \cdots, B_{p-1}, D_{0}, \cdots, D_{p-1}]^t$ in (\ref{praw}) is a given constant vector that is chosen among $(p-1)!(p-1)^{p-1}$ choices satisfying conditions (i) and (ii) of Proposition \ref{ergp5}.

(ii) the coefficient row vector, $[z_0, \cdots, z_{2p-1}]$ of $g_1(x)=\sum_{i=0}^{2p-1}{z_ix^i} \in \Z/p\Z[x]$,
satisfies the nonvanishing modulo $p$ of the linear polynomial $l$: $$ l(z_0,\cdots, z_{2p-1}) \not \equiv 0\pmod{p},$$ where $ l(z_0,\cdots, z_{2p-1})$ is given explicitly by the formula:
\begin{eqnarray}\label{lmin}
l(z_0,\cdots, z_{2p-1}) = \frac{1}{p} f_0^p(0)+ \frac{1}{g_0'(0)} z_0 + \sum_{i=1}^{p-1}w_i g_1(g_0^i(0)),
\end{eqnarray}
where, for $1 \leq i \leq p-2,$
\begin{eqnarray*}\label{wi}
w_i = \prod_{j=i+1}^{p-1}g_0'(g_0^j(0))~{\rm and }~w_{p-1}=1.
\end{eqnarray*}
\end{theorem}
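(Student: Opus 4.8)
The plan is to deduce everything from Proposition~\ref{ergp5}, applied to a lift of $g$ to $\Zp[x]$; the choice of lift is immaterial, since ergodicity of a polynomial, equivalently transitivity modulo $p^{\mu}=p^2$ by Propositions~\ref{minred} and~\ref{ergeq}, depends only on $g\bmod p^2$. Write $g=g_0+pg_1$ with $g_0=\sum_{i=0}^{2p-1}e_ix^i$, $e_i\in\{0,\dots,p-1\}$, and $g_1=\sum_{i=0}^{2p-1}z_ix^i$; as only $g\bmod p^2$ matters, the $z_i$ may be read modulo $p$. Recall from the paragraphs preceding the theorem that the constants in (\ref{praw}) satisfy $\sum_{i=0}^{p-1}B_ix^i\equiv g\pmod p$ by (\ref{redf}) and $D_i=g'(i)$ for $0\le i\le p-1$; since $g$ is a polynomial it is uniformly differentiable modulo $p$ with $\partial_1 g(i)\equiv g'(i)\pmod p$, so by (\ref{equsns3}) the normalized van der Put coefficients obey $b_{i+p}\equiv g'(i)=D_i\pmod p$. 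First I would show that $M$ in (\ref{Matp}) is invertible modulo $p$: its kernel modulo $p$ consists of polynomials $h$ of degree $\le 2p-1$ that vanish as functions on $\F_p$ and satisfy $h'(i)\equiv 0$ for all $i\in\F_p$; such $h$ is divisible by $x^p-x$, say $h=(x^p-x)q$ with $\deg q\le p-1$, and then $h'\equiv -q+(x^p-x)q'\pmod p$ gives $q(i)\equiv -h'(i)\equiv 0$ for all $i\in\F_p$, whence $q\equiv 0$ and $h\equiv 0$. Thus for every target $\mathbf{b}$ the system $M\mathbf{x}\equiv\mathbf{b}\pmod p$ has a unique solution, namely the coefficient vector of a uniquely determined $g_0$.

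Next I would match conditions (i)--(ii) of Proposition~\ref{ergp5} with the admissible $\mathbf{b}$'s. By the dictionary above, $g$ is transitive modulo $p$ iff the reduction $\sum B_ix^i$ induces a single $p$-cycle on $\F_p$; there are $(p-1)!$ such permutations, each realized by a unique $(B_0,\dots,B_{p-1})\bmod p$. Given transitivity modulo $p$, the chain rule yields $(g^p)'(0)=\prod_{j=0}^{p-1}g'(g^j(0))=\prod_{a\in\F_p}g'(a)=\prod_{i=0}^{p-1}D_i$, so condition (ii), i.e.\ $b_p\cdots b_{2p-1}\equiv1\pmod p$, is equivalent to $\prod_{i=0}^{p-1}D_i\equiv1\pmod p$; in particular every $D_i$ is a unit, and for each fixed $p$-cycle there are exactly $(p-1)^{p-1}$ such tuples $(D_0,\dots,D_{p-1})$. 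Hence the vectors $\mathbf{b}$ satisfying (i)--(ii) of Proposition~\ref{ergp5} are precisely the $(p-1)!\,(p-1)^{p-1}$ vectors in the statement, and for each the unique solution $g_0$ of $M\mathbf{x}\equiv\mathbf{b}$ is a transitive-mod-$p$ polynomial of degree $\le 2p-1$. This identifies condition~(i) of the theorem with conditions (i)--(ii) of Proposition~\ref{ergp5} holding for $g$, independently of $g_1$.

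It remains to show condition (iii) of Proposition~\ref{ergp5} is the nonvanishing of $l$. Write the integer iterates as $g^i(0)=g_0^i(0)+pc_i$ with $c_0=0$; expanding $g=g_0+pg_1$ by Taylor's formula at $g_0^i(0)$ and discarding multiples of $p^2$ (no $\tfrac12$ obstruction since $p$ is odd) gives $c_{i+1}\equiv c_i\,g_0'(g_0^i(0))+g_1(g_0^i(0))\pmod p$, hence by unrolling $c_p\equiv\sum_{i=0}^{p-1}w_i\,g_1(g_0^i(0))\pmod p$ with $w_i=\prod_{j=i+1}^{p-1}g_0'(g_0^j(0))$ and $w_{p-1}=1$. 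Since $g_0$ is a single $p$-cycle, $g_0^p(0)\equiv 0\pmod p$, so $\tfrac1p g^p(0)\equiv\tfrac1p g_0^p(0)+c_p\pmod p$; isolating the $i=0$ term, using $g_1(0)=z_0$ and $w_0=\prod_{j=1}^{p-1}g_0'(g_0^j(0))\equiv\bigl(\prod_{a\in\F_p}g_0'(a)\bigr)g_0'(0)^{-1}\equiv g_0'(0)^{-1}\pmod p$ (by condition (ii)), this collapses to $\tfrac1p g^p(0)\equiv l(z_0,\dots,z_{2p-1})\pmod p$, with $l$ the affine-linear form (\ref{lmin}). Therefore $g^p(0)\in p\Zp\setminus p^2\Zp$ iff $l(z_0,\dots,z_{2p-1})\not\equiv 0\pmod p$.

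Assembling: by Proposition~\ref{ergp5}, $g$ is ergodic (equivalently transitive modulo $p^2$) iff (i), (ii), (iii) hold, which by the previous two steps is exactly the assertion that $\mathbf{b}$ is one of the $(p-1)!\,(p-1)^{p-1}$ admissible vectors, $g_0$ is the corresponding unique solution of $M\mathbf{x}\equiv\mathbf{b}$, and $l(z_0,\dots,z_{2p-1})\not\equiv 0\pmod p$, which is the asserted characterization. I expect the main obstacle to be the iteration bookkeeping in the third step: one must track the second-order term modulo $p^2$ accurately, recognize the telescoping product $w_i$, and invoke condition (ii) precisely where it collapses $w_0$ to $g_0'(0)^{-1}$. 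The invertibility of $M$ and the dictionary among $\{\alpha_i\}$, $\{B_i\}$, $\{D_i\}$ and $\{b_{i+p}\}$ are routine but must be set up with care.
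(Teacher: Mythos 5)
Your proposal is correct and follows essentially the route the paper takes: the paper defers the written proof to \cite[Theorem 6.2]{J21b}, but its surrounding discussion (Proposition \ref{ergp5}, the linear system (\ref{praw})--(\ref{Matp}), and the mod-$p^2$ iteration producing a linear form in the $z_i$) is exactly your three steps, and your recursion $c_{i+1}\equiv c_i\,g_0'(g_0^i(0))+g_1(g_0^i(0))\pmod p$ together with the collapse of $w_0$ to $g_0'(0)^{-1}$ via condition (ii) reproduces (\ref{lmin}) (with $f_0^p(0)$ read as $g_0^p(0)$, evidently a typo). Your kernel argument for the invertibility of $M$ via divisibility by $x^p-x$ and the count $(p-1)!\,(p-1)^{p-1}$ are also correct.
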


\begin{proof}
  See \cite[Theorem 6.2] {J21b}.
\end{proof}

\begin{remark}
{\rm For a fixed constant column vector, ${\bf b}=[B_0, \cdots, B_{p-1}, D_{0}, \cdots, D_{p-1}]^t$ modulo $p$ out of $(p-1)!(p-1)^{p-1}$ choices, we can count the set $E$ of nonequivalent minimal polynomials of degree $\leq 2p-1$ with  coefficients in $\Z/p^2\Z$ that correspond to all coefficient vectors, $[z_0, \cdots z_{2p-1}]{\rm mod}p$, satisfying the condition, $l(z_0,\cdots, z_{2p-1}) \not \equiv 0 \pmod{p}$ in (\ref{lmin}). 
It is shown in \cite{J21b} that the set $E$  has cardinality of $(p-1)!(p-1)^{p} p^{p-1}$ by considering  an 
equivalence relation on the set $S$ defined by $$ S:=\{ [z_0, \cdots z_{2p-1}] \in (\Z/p\Z)^{2p}~|~ l(z_0,\cdots, z_{2p-1}) \not \equiv 0 \pmod{p} \}.$$ 
}
\end{remark}

\end{document}